\documentclass{amsart}
\usepackage{import}

\usepackage[utf8]{inputenc}
\usepackage{amsthm}
\usepackage{amsfonts}
\usepackage{graphicx}
\usepackage{amsmath, amssymb}
\usepackage{hyperref,cleveref}
\usepackage{mathrsfs}
\usepackage{multirow}
\usepackage{centernot}
\usepackage{tikz}
\usepackage{tikz-cd}
\usepackage{color}
\usetikzlibrary{arrows}
\usepackage{pgf}
\usepackage{mathtools}
\usepackage{enumerate}
\usepackage{soul}
\usepackage{comment}
\usepackage[myheadings]{fullpage}

\newtheorem{theorem}{Theorem}[section]

\newtheorem{lemma}[theorem]{Lemma}
\newtheorem{proposition}[theorem]{Proposition}
\newtheorem{corollary}[theorem]{Corollary}
\newtheorem{claim}[theorem]{Claim}

\theoremstyle{definition}
\newtheorem{definition}[theorem]{Definition}

\newtheorem{remark}{Remark}

\newcommand{\R}{\mathbb{R}}
\newcommand{\C}{\mathbb{C}}

\newcommand{\E}{\mathbb{E}}

\newcommand{\HH}{\mathbb{H}}

\newcommand{\PP}{\mathbb{P}}
\newcommand{\Z}{\mathbb{Z}}

\DeclareMathOperator{\tr}{\mathrm{tr}\,}

\newcommand{\e}{\varepsilon}

\newcommand{\1}{\mathbf{1}}

\newcommand{\bi}{\mathbf{i}}

\renewcommand{\vec}[1]{\boldsymbol{#1}}

\newcommand{\fB}{\mathfrak{B}}

\newcommand{\fF}{\mathfrak{F}}

\newcommand{\fa}{\mathfrak{a}}

\newcommand{\cB}{\mathcal{B}}

\newcommand{\cD}{\mathcal{D}}
\newcommand{\cE}{\mathcal{E}}

\newcommand{\cI}{\mathcal{I}}

\newcommand{\cM}{\mathcal{M}}

\newcommand{\cR}{\mathcal{R}}

\newcommand{\cT}{\mathcal{T}}
\newcommand{\cU}{\mathcal{U}}
\newcommand{\cV}{\mathcal{V}}

\newcommand{\cX}{\mathcal{X}}
\newcommand{\cY}{\mathcal{Y}}

\newcommand{\GL}{\mathsf{GL}}
\newcommand{\gl}{\mathfrak{gl}}
\newcommand{\U}{\mathsf{U}}

\DeclareMathOperator{\supp}{supp}

\newcommand{\cl}{\mathrm{cl}}

\makeatletter
\newcommand{\vast}{\bBigg@{4}}
\newcommand{\Vast}{\bBigg@{5}}
\makeatother

\newcommand{\wh}[1]{\widehat{#1}}
\newcommand{\wt}[1]{\widetilde{#1}}

\DeclareMathOperator{\diag}{diag}

\let\Re\relax
\DeclareMathOperator{\Re}{Re}
\let\Im\relax
\DeclareMathOperator{\Im}{Im}

\numberwithin{equation}{section}

\title{Extremal singular values of random matrix products and Brownian motion on $\GL(N,\C)$}
\author{Andrew Ahn}

\begin{document}
\maketitle

\begin{abstract}
We establish universality for the largest singular values of products of random matrices with right unitarily invariant distributions, in a regime where the number of matrix factors and size of the matrices tend to infinity simultaneously. The behavior of the largest log singular values coincides with the large $N$ limit of Dyson Brownian motion with a characteristic drift vector consisting of equally spaced coordinates, which matches the large $N$ limit of the largest log singular values of Brownian motion on $\GL(N,\C)$. Our method utilizes the formalism of multivariate Bessel generating functions, also known as spherical transforms, to obtain and analyze combinatorial expressions for observables of these processes.
\end{abstract}

\section{Introduction}

Suppose $X(1),X(2),\ldots$ is a sequence of $N\times N$ independent random matrices, and let
\[ Y(M) := X(M) \cdots X(1). \]
As a natural model for systems exhibiting progressive scattering, the study of random matrix products has motivations from a variety of contexts including chaotic dynamical systems \cite{Fur1960,CPV1993}, deep neural network \cite{PSG2017,Han2018,HN2020}, and wireless communications \cite{TV2004}. If the matrices $X(m)$ are complex and nonsingular, as in this paper, the discrete time (in $M$) stochastic process $\{Y(M)\}_{M\in \Z_{>0}}$ is a random walk on $\GL(N,\C)$.

In this paper, we establish universality of the largest singular values of $Y(M)$ in the limit as the number of matrix factors $M$ and matrix size $N$ tend to infinity with $M \asymp N$. We focus on random complex matrices $X(m)$ which are \emph{right unitarily invariant}, i.e. the distribution of $X(m) U$ matches the distribution of $X(m)$ for any matrix $U$ in the unitary group $\U(N)$. Under assumptions imposing weak concentration of the support in $\R_{>0}$ and nonvanishing of the average (over $m$) variance of the empirical measures of $X(m)$, we show that the fluctuations of the largest singular values match those of the $N\to\infty$ limit of \emph{Brownian motion $\mathsf{Y}^{(N)}(t)$ on $\GL(N,\C)$}.

The approach used in this work is rooted in tools and ideas from integrable probability (summarized in \Cref{ssec:method}). This departs from previous local universality results for random matrices, such as for Wigner matrices (e.g. \cite{EPRSY10,TV11}), which typically employ non-integrable methods.

\subsection{Brownian motion on $\GL(N,\C)$ and Dyson Brownian motion}

We introduce Brownian motion on $\GL(N,\C)$ and describe the $N\to\infty$ limit of its singular values.

\begin{definition}
\emph{Brownian motion on $\GL(N,\C)$} is a diffusion on $\GL(N,\C)$ with infinitesimal generator given by $\tfrac{1}{2} \Delta$ where $\mathsf{Y}^{(N)}(0)$ is the identity and $\Delta$ is the Laplace-Beltrami operator on $\GL(N,\C)$ with respect to the metric induced by the Hilbert-Schmidt inner product
\[ \langle X, Y \rangle := \tr(X Y^*) \]
on the Lie algebra $\mathfrak{gl}(N,\C)$. Equivalently, $\mathsf{Y}^{(N)}(t)$ is the $\GL(N,\C)$-valued stochastic process $\{\mathsf{Y}^{(N)}(t)\}_{t \ge 0}$ satisfying the Stratonovich equation
\[ d\mathsf{Y}^{(N)}(t) =  \mathsf{Y}^{(N)}(t) \circ dW^{(N)}(t), \quad \quad \mathsf{Y}^{(N)}(0) = 1_N\]
where $1_N \in \GL(N,\C)$ is the identity and
\[ W^{(N)}(t) := \sum_{b \in \beta} W_b(t) b \]
is additive Brownian motion on $\gl(N,\C)$, taking $\beta$ to be any orthonormal basis of $\gl(N,\C)$ with respect to the Hilbert-Schmidt inner product.
\end{definition}

The large $N$ limit of $\mathsf{Y}^{(N)}(t)$, in the sense of *-distribution, is the free multiplicative Brownian motion \cite{Kem16}. Additional aspects of the $N\to\infty$ limit are known, including global fluctuations \cite{CK2014} and the limit of the Brown measure (a candidate for the limit of the eigenvalue empirical distribution). In this paper, we consider the $N\to\infty$ limit of the singular values of $\mathsf{Y}^{(N)}(t)$.

Let $\vec{\xi}^{(N)}(t) = (\xi_1^{(N)}(t) \ge \cdots \ge \xi_N^{(N)}(t))$ denote the logarithm of the squared singular values of $\mathsf{Y}^{(N)}(t)$. It is a remarkable fact (see \cite[Corollary 3.3]{JO06}, \cite{Bia95}, and \cite{IS2016}) that the evolution of $\vec{\xi}^{(N)}(t/4)$ coincides with that Dyson Brownian motion with drift:

\begin{theorem}[{\cite[Corollary 3.3]{JO06}}] \label{thm:brownian_nibm}
The process $\boldsymbol{\xi}^{(N)}(t/4)$ evolves as Brownian motion on $\R^N$ with drift
\[ \left(\frac{N-1}{2},\frac{N-3}{2},\ldots,\frac{-N+3}{2},\frac{-N+1}{2}\right), \]
started at the origin, and
conditioned to remain in the set $\{\vec{x} = (x_1,\ldots,x_N): x_1 \ge \cdots \ge x_N\}$.
\end{theorem}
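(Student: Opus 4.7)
My strategy is to derive the SDE for $\vec\xi^{(N)}(t)$ by direct Itô calculus on $\GL(N,\C)$, and then to identify the resulting generator with that of drifted Brownian motion conditioned to remain in the Weyl chamber via a Doob $h$-transform.

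\emph{Step 1: derive the SDE.} Let $Z(t) = \mathsf Y^{(N)}(t)\mathsf Y^{(N)}(t)^*$, whose eigenvalues are $e^{\xi_i^{(N)}(t)}$. Converting the defining Stratonovich equation to Itô form uses the cancellation $\sum_b b^2 = 0$ for any orthonormal basis $\{b\}$ of $\gl(N,\C)$ under the real Hilbert-Schmidt inner product, giving $d\mathsf Y = \mathsf Y\,dW^{(N)}$. The quadratic-covariation identities $dW^{(N)}_{ab}\,d\overline{W^{(N)}_{cd}} = 2\delta_{ac}\delta_{bd}\,dt$ and $dW^{(N)}_{ab}\,dW^{(N)}_{cd} = 0$ then yield
\[ dZ = \mathsf Y\,dW^{(N)}\,\mathsf Y^* + \mathsf Y\,(dW^{(N)})^*\,\mathsf Y^* + 2N\, Z\, dt. \]
Working in the instantaneous eigenbasis of $Z$ and exploiting the bi-unitary invariance of $W^{(N)}$, second-order Hadamard eigenvalue perturbation combined with Itô applied to the logarithm, together with the identity $\frac{e^{\xi_i}+e^{\xi_j}}{e^{\xi_i}-e^{\xi_j}} = \coth\!\tfrac{\xi_i-\xi_j}{2}$, produces
\[ d\xi_i = 2\, d\beta_i + 2\sum_{j\ne i}\coth\!\tfrac{\xi_i - \xi_j}{2}\, dt, \]
for independent standard real Brownian motions $\beta_i$. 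The time change $t \mapsto t/4$ then gives generator
\[ \cL = \frac{1}{2}\sum_i \partial_{\xi_i}^2 + \frac{1}{2}\sum_k\sum_{j\ne k}\coth\!\tfrac{\xi_k-\xi_j}{2}\,\partial_{\xi_k}. \]

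\emph{Step 2: Doob $h$-transform.} Set $\vec a = ((N-1)/2,\ldots,(1-N)/2)$ and $L_0 = \frac{1}{2}\Delta + \sum_i a_i\partial_i$. For each permutation $\sigma$ the exponential $e^{(\sigma\vec a - \vec a)\cdot\vec x}$ is $L_0$-harmonic because $|\sigma\vec a|^2 = |\vec a|^2$, so the antisymmetrization
\[ h(\vec x) = e^{-\vec a\cdot\vec x}\det(e^{a_i x_j})_{i,j=1}^N \]
is $L_0$-harmonic, positive on the chamber $\{x_1 > \cdots > x_N\}$, and vanishes on its boundary. The Weyl denominator formula gives $\det(e^{a_i x_j}) = \prod_{i<j}2\sinh((x_i-x_j)/2)$, and a direct differentiation yields $\partial_{x_k}\log h = -a_k + \frac{1}{2}\sum_{j\ne k}\coth((x_k-x_j)/2)$. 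Hence the Doob $h$-transform of $L_0$, with generator $\frac{1}{2}\Delta + \sum_k(a_k + \partial_{x_k}\log h)\partial_k$, equals $\cL$. Standard $h$-transform theory identifies the transformed process with the $L_0$-diffusion started at the origin and conditioned to remain in the chamber.

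\emph{Main obstacle.} The delicate point is Step 1: the factor of $2$ in $dW^{(N)}_{ab}\,d\overline{W^{(N)}_{cd}} = 2\delta_{ac}\delta_{bd}\,dt$ (stemming from the real HS inner product), the vanishing $\sum_b b^2 = 0$, and the bi-invariance argument that legitimates computing in the instantaneous eigenbasis of $Z$ must combine to produce the precise drift and the exact $t\mapsto t/4$ rescaling. One must also verify non-intersection of the $\xi_i$ pathwise before applying Hadamard perturbation; this is a standard but non-trivial matter handled by showing that collisions are non-attainable, e.g.\ via a Lyapunov argument on the squared Vandermonde.
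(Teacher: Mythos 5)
The paper does not prove this theorem: it is stated as a citation to \cite[Corollary 3.3]{JO06}, and the text around it merely notes it is a ``remarkable fact'' with pointers to \cite{JO06}, \cite{Bia95}, and \cite{IS2016}. So there is no in-paper proof for your argument to be compared against; you are supplying a self-contained derivation of a cited result. That said, your outline is correct and follows the standard route (which is essentially also what \cite{JO06} does, though in the more general framework of complex symmetric spaces $G/K$): compute the radial SDE, recognize the drift as $\tfrac12\sum_{j\neq k}\coth\tfrac{x_k-x_j}{2}$, and identify the generator as the Doob $h$-transform of drifted Brownian motion by the Weyl-denominator harmonic function.

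The arithmetic in both steps checks out. In Step 1, with the basis $\{E_{ij},\ iE_{ij}\}$ for $\gl(N,\C)$ under $\Re\tr(XY^*)$, one indeed has $\sum_b b^2=0$ (so Stratonovich equals It\^o) and $\sum_b bb^*=2N\cdot 1_N$ (giving the $2NZ\,dt$ correction), and Hadamard perturbation plus It\^o on $\log$ yields exactly $d\xi_i = 2\,d\beta_i + 2\sum_{j\ne i}\coth\tfrac{\xi_i-\xi_j}{2}\,dt$; the $t\mapsto t/4$ rescaling halves both diffusion coefficient and drift, giving your $\cL$. In Step 2, each $e^{(\sigma\vec a-\vec a)\cdot\vec x}$ is $L_0$-harmonic because $|\sigma\vec a|=|\vec a|$, the antisymmetrization equals $e^{-\vec a\cdot\vec x}\prod_{i<j}2\sinh\tfrac{x_i-x_j}{2}$, and the resulting $a_k+\partial_k\log h = \tfrac12\sum_{j\ne k}\coth\tfrac{x_k-x_j}{2}$ matches $\cL$.

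Two points deserve to be made explicit rather than waved through. First, the It\^o computation in the ``instantaneous eigenbasis'' and the Hadamard expansion require the eigenvalues of $Z(t)$ to stay simple for $t>0$; you flag this, but it is not merely a technical footnote since $Z(0)=1_N$ is the most degenerate possible starting point. Second, and related, ``started at the origin'' puts the process on the boundary of the Weyl chamber, where $h$ vanishes, so the $h$-transform/conditioning identification cannot be applied verbatim: one must construct the entrance law at $0$ (as is done for ordinary Dyson Brownian motion), for instance by starting in the interior, taking a limit of initial conditions to $0$, and checking that the transition densities extend continuously. Including a sentence or two on that entrance-law step, together with your Lyapunov non-collision argument for $t>0$, would make the derivation complete.
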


The original statement from \cite{JO06} was in terms of the singular values of $\mathsf{Y}^{(N)}(t)$. We note that \cite{JO06} also provided analogous descriptions for Brownian motions on symmetric spaces $G/K$ where $G$ is a complex semi-simple non-compact connected Lie group with finite center and $K$ is a maximal compact subgroup. In this framework, the drift vector is the sum of the associated positive roots, where our setting corresponds to Type A.

From the identification with Dyson Brownian motion with drift, the process $\vec{\xi}^{(N)}(t)$ admits determinantal spacetime correlation functions with exact formulas for the correlation kernel. In \cite{Joh04}, the $N\to\infty$ limit of $\xi^{(N)}(t)$ was studied via these kernels, where it was shown to exhibit number variance saturation. In this paper, we prove a stronger form of convergence using machinery from \cite{CH14}.

\begin{theorem} \label{thm:dyson_limit}
There exists a limiting infinite line ensemble (in the sense of \cite{CH14}, see \Cref{def:dyson_limit})
\[ \boldsymbol{\xi}(t) := (\xi_1(t), \xi_2(t),\ldots) = \lim_{N\to\infty} \boldsymbol{\xi}^{(N)}\left( \frac{t}{4} \right) - \frac{Nt}{2} - \log N
\]
where the convergence holds in the following sense. For any $T > 0$ and any positive integer $k$, the random continuous function $\left(\xi_1^{(N)}(\tfrac{t}{4}) - \tfrac{Nt}{2} - \log N,\ldots,\xi_k^{(N)}(\tfrac{t}{4}) - \tfrac{Nt}{2} - \log N \right)$ converges to $(\xi_1(t),\ldots,\xi_k(t))$ as $N\to\infty$ in the weak-* topology of probability measures on $C([\tfrac{1}{T},T])^k$
\end{theorem}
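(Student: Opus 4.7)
The plan is to use the identification of $\boldsymbol{\xi}^{(N)}(t/4)$ with a Dyson-type Brownian motion with drift provided by Theorem~\ref{thm:brownian_nibm}, which endows the space-time process with a determinantal structure and an explicit extended correlation kernel. Following the line ensemble framework of \cite{CH14}, the proof splits into two parts: convergence of finite-dimensional distributions, obtained by asymptotic analysis of the kernel; and tightness of the top $k$ curves in $C([1/T,T])^k$, obtained by combining one-point tightness with a Brownian Gibbs property.

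For the first ingredient, the Karlin--McGregor formalism together with the Eynard--Mehta biorthogonalization yields a double contour integral representation for the extended kernel of the $N$-particle ensemble of Brownian motions with drifts $\bigl(\tfrac{N-2j+1}{2}\bigr)_{j=1}^{N}$ started at the origin and conditioned to stay ordered. After the deterministic shift by $Nt/2+\log N$ in each spatial coordinate, the integrand's saddle points should coalesce in a regime producing a nondegenerate pointwise limit kernel, which ought to reproduce the number-variance-saturating kernel of \cite{Joh04}. A standard Fredholm determinant argument then upgrades this to convergence of finite-dimensional distributions of the top $k$ shifted coordinates. Multi-time convergence follows by applying the same saddle analysis in each time variable of the extended kernel.

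For tightness, the non-intersecting Brownian motion with drift satisfies a Brownian Gibbs property in the sense of \cite{CH14}: conditional on the top $k$ curves at the endpoints of an interval $[s,u]$ and on the $(k+1)$-st curve over $[s,u]$, the top $k$ on $[s,u]$ are distributed as $k$ independent Brownian motions with the prescribed drifts, conditioned to remain ordered among themselves and above the $(k+1)$-st curve. Combining this property with the one-point tightness from step one and with the modulus-of-continuity and monotone coupling arguments of \cite{CH14}, one obtains tightness of the top $k$ shifted curves in $C([1/T,T])^k$; the restriction to $t \ge 1/T$ avoids the initial collision at $t=0$, while the upper bound $T$ is a cosmetic compact-set cutoff. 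The limit $\boldsymbol{\xi}(t)$ inherits a Brownian Gibbs property which, combined with the convergent finite-dimensional distributions, identifies it uniquely as a line ensemble.

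The main obstacle is the saddle-point analysis producing precisely a $\log N$ shift, rather than the more familiar $\sqrt{N}$ or $N^{1/3}$ edge shifts: this reflects the saturation of number variance specific to this ensemble and is sensitive to the coalescence geometry of the saddles dictated by the equally-spaced drift vector. A secondary, essentially routine, task is to verify that the tightness machinery of \cite{CH14}, originally formulated in a driftless setting, extends to linearly growing drifts; this should follow by absorbing the drift into an explicit Radon--Nikodym factor between drifted and driftless Brownian bridges, which remains uniformly controlled on the relevant scales.
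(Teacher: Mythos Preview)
Your overall strategy matches the paper's: establish convergence of the extended determinantal kernel (hence finite-dimensional convergence), then invoke the Brownian Gibbs machinery of \cite{CH14} (specifically their Propositions~3.6 and~3.12) to upgrade to tightness in $C([1/T,T])^k$. The paper's proof is terse and cites \cite{CH14} for precisely the Gibbs-property tightness argument you outline, so on that part you are aligned.

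Where you diverge is in the kernel asymptotics, and here you have a misconception worth correcting. You anticipate a saddle-point analysis with \emph{coalescing} saddles and flag the $\log N$ shift as ``the main obstacle,'' expecting delicate geometry specific to this ensemble. In fact no saddle analysis is needed at all. Writing the kernel from \cite{Joh01} as a double contour integral, the only $N$-dependence sits in the product $\prod_{i=1}^N \frac{w+i-1/2}{z+i-1/2} = \frac{\Gamma(w+N+\tfrac12)\Gamma(z+\tfrac12)}{\Gamma(z+N+\tfrac12)\Gamma(w+\tfrac12)}$, and Stirling's formula gives $\frac{\Gamma(w+N+\tfrac12)}{\Gamma(z+N+\tfrac12)} = N^{w-z}(1+O(1/N))$ uniformly on compact subsets of the contours. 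This factor exactly cancels the $N^{z-w}$ coming from the spatial shift $x\mapsto x+\log N$, so the limit kernel drops out pointwise with no critical-point coalescence and no subtle scaling. The $\log N$ is thus not an obstacle but the transparent consequence of a Gamma ratio, and your ``secondary task'' of accommodating drifts in the \cite{CH14} framework is the only genuinely nontrivial verification (the paper handles it by pointing to \cite[Proposition~3.12]{CH14} and the determinantal structure).

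As a side note, the paper runs a parallel Laplace-transform route (Proposition~\ref{thm:brownian_convergence}(i) and Lemma~\ref{thm:laplace_implies_findim}) alongside the kernel route; this is not strictly needed for Theorem~\ref{thm:dyson_limit} but is set up for reuse in the main universality theorem.
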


In view of \Cref{thm:dyson_limit}, the limiting process $\vec{\xi}(t)$ may be interpreted as a $\Z_{\ge 1}$-tuple of non-intersecting Brownian motions with drift where the $i$th Brownian motion (from the top) has drift $-i + \tfrac{1}{2}$. The joint Laplace transform and corrlation kernel of $\vec{\xi}(t)$ can be explicitly computed, given in \Cref{thm:line_ensemble}.

\begin{figure}[ht]
    \centering
  \centering
  \includegraphics[width=0.49\linewidth]{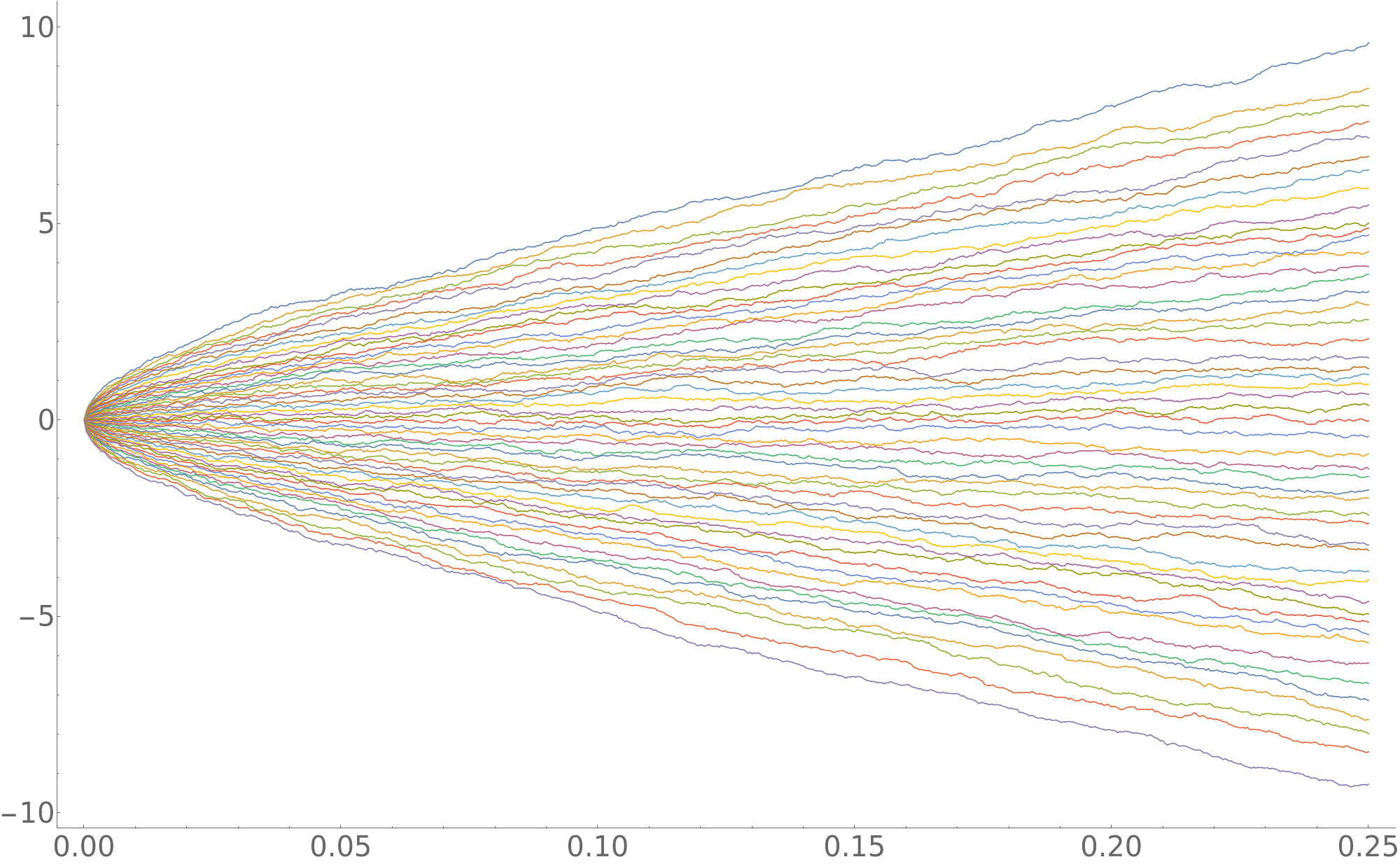}
  \centering
  \includegraphics[width=0.49\linewidth]{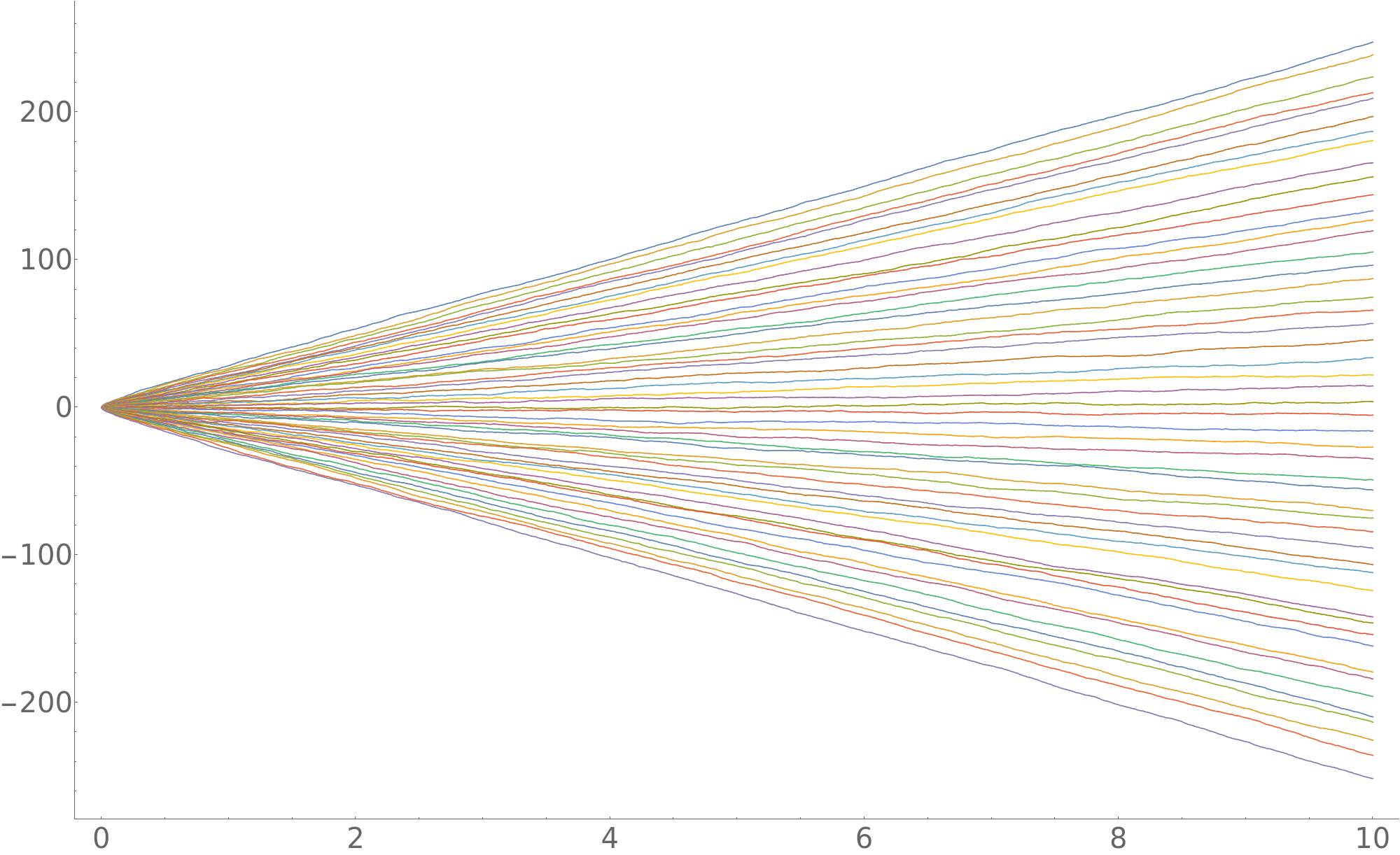}
\caption{Samples of $\vec{\xi}^{(N)}(\tfrac{t}{4})$ where $N = 50$, with $0 \le t \le 0.25$ (left) and $0 \le t \le 10$ (right)}
\label{fig:contour}
\end{figure}

\subsection{Main result}
Our main result is that the largest log squared singular values of random matrix products $Y^{(N)}(M)$ in the limit $N,M\to\infty$ with $N\asymp M$ converges to the infinite line ensemble $\{\boldsymbol{\xi}(t)\}_{t > 0}$ in finite dimensional distribution, under mild assumptions. Given an $N\times N$ matrix $X$, let
\[ |X|^2 := X^*X. \]

\begin{theorem} \label{thm:main}
Suppose $X^{(N)}(1),X^{(N)}(2),\ldots$ are random complex $N\times N$ matrices with right unitarily invariant distributions and denote by
\[ y^{(N)}_1(M) \ge \cdots \ge y^{(N)}_N(M) \]
the squared singular values of $X^{(N)}(M) \cdots X^{(N)}(1)$. Assume that
\begin{enumerate}[(i)]
    \item there exists $C > 0$ such that
    \begin{align} \label{eq:interval_containment}
    1 - \PP\left( \mbox{all squared singular values of $X^{(N)}(m)$ are contained in $[C^{-1},C]$} \right) = o(1/N)
    \end{align}
    uniformly over $m = 1,2,\ldots$, and
    
    \item there exists a continuous $\gamma: \R_{>0} \to \R_{>0}$ such that $\lim_{t \searrow 0} \gamma(t) = 0$ and
    \begin{align} \label{eq:variance_curve}
    \frac{1}{N} \sum_{m=1}^{\lfloor tN \rfloor} \frac{\tr( | X^{(N)}(m)|^4 ) - \tr( |X^{(N)}(m)|^2 )^2}{\tr( |X^{(N)}(m)|^2 )^2} = \gamma(t) + o_{\PP}(1)
    \end{align}
    for each $t > 0$.
    \end{enumerate}
    For each positive integer $h$, the process (in time $t$)
    \[ \log y^{(N)}_j\left( \lfloor tN \rfloor \right) - \sum_{m=1}^{\lfloor tN \rfloor} \log \left( \tr |X^{(N)}(m)|^2 \right) - \log N, \quad \quad j = 1,\ldots,h \]
    converges in finite dimensional distributions to the top $h$ paths $\xi_1(\gamma(t)),\ldots,\xi_h(\gamma(t))$ of $\vec{\xi}(\gamma(t))$.
\end{theorem}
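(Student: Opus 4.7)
My plan is to use the multivariate Bessel generating function (spherical transform) formalism signaled in the abstract. Since each $X^{(N)}(m)$ is right unitarily invariant, $|Y^{(N)}(M)|^2$ has a bi-unitarily invariant distribution, so the joint law of its eigenvalues is captured by a generating function $\cF_M(s_1,\ldots,s_N)$, namely the expectation of the multivariate Bessel function evaluated at the log squared singular values. The central algebraic input is that multiplication of independent right-unitarily-invariant matrices trivializes under the spherical transform, giving the factorization
\[ \cF_M(\vec s) = \prod_{m=1}^{M} \cF^{X^{(N)}(m)}(\vec s), \]
where $\cF^{X^{(N)}(m)}$ is the analogous generating function for $|X^{(N)}(m)|^2$. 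This reduces the $M$-fold matrix product to an additive problem at the level of the logarithm of the spherical transform.

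I would then extract joint observables of the top $h$ log squared singular values at finitely many times $M_1 < \cdots < M_r$ by applying symmetric differential operators to $\cF_{M_1},\ldots,\cF_{M_r}$, and use standard integrable probability machinery to rewrite these as iterated contour integrals whose integrand contains $\prod_m \cF^{X^{(N)}(m)}$. Under assumption (i), each factor $\log \cF^{X^{(N)}(m)}$ is analytic in a fixed neighborhood of the origin with Taylor coefficients bounded uniformly in $m$, and under assumption (ii) the normalized sum over $m \le \lfloor tN \rfloor$ of the quadratic coefficients is comparable to $\gamma(t)$. After subtracting the deterministic centering $\sum_m \log \tr |X^{(N)}(m)|^2 + \log N$, a steepest descent analysis should kill the leading linear contribution and leave only the quadratic one, producing Gaussian-type integrals whose effective variance is driven by $\gamma(t)$ and whose linear drift $-j + \tfrac{1}{2}$ in the $j$th slot arises from the Vandermonde structure inherent to the Bessel kernel. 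These should match precisely the corresponding observables of $\vec\xi(\gamma(t))$ given by \Cref{thm:line_ensemble}, so comparing them yields convergence of joint Laplace transforms and, by standard arguments, convergence in finite dimensional distributions.

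The hardest step, I expect, is the multi-index multi-time saddle point analysis: the contours corresponding to distinct top particles $j_1 < \cdots < j_h$ at times $M_1,\ldots,M_r$ must be deformed through the correct critical points while maintaining a nested configuration, since residues from contour collisions would otherwise inject spurious terms with incorrect dependence on the time gaps. A secondary but genuine technical challenge is uniformly controlling the cubic and higher Taylor terms of $\log \cF^{X^{(N)}(m)}$ across $m \le \lfloor tN \rfloor$: with $M \asymp N$ factors, even an $O(1/N)$ error per factor summed over all $m$ could contribute nontrivially to the limit, and it is the $o(1/N)$ strength of the tail bound in assumption (i), combined with the containment in $[C^{-1},C]$, that should make this uniform control feasible.
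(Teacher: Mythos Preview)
Your overall framework matches the paper's: use the multivariate Bessel generating function, exploit its multiplicativity under independent products (Lemma~\ref{thm:spherical_convolution}), extract joint observables via the difference operators $\cD_c$ (Proposition~\ref{thm:observable_operators}), and identify the second cumulant of the empirical measures as the driver of the effective time $\gamma(t)$. Where your plan diverges, and where I see a genuine gap, is in the execution step ``rewrite these as iterated contour integrals and do steepest descent.'' The contour integral identity used in the proof of Corollary~\ref{thm:brownian_observable} requires the function being acted on to split as $\prod_{i=1}^N f(z_i)$ for a single-variable $f$. That holds for Brownian motion on $\GL(N,\C)$ (Proposition~\ref{thm:brownian_spherical_function}), but for a general right-unitarily-invariant $X$ the generating function $\Phi_X(z_1,\ldots,z_N)$ is a genuinely $N$-variable symmetric function with no such separation, so there is no clean contour integral on which to run steepest descent. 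Relatedly, your ``Taylor expansion of $\log\cF^{X^{(N)}(m)}$ near the origin'' does not make sense as stated: this is a function of $N$ variables, and there is no single ``origin'' around which a scalar quadratic approximation captures the edge.

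The paper instead expands $\cD_{c_1}\cdots\cD_{c_k}$ directly as a finite sum over $(i_1,\ldots,i_k)\in\{1,\ldots,N\}^k$. Each summand involves the Bessel function with only $k$ of the $N$ coordinates displaced from $\rho_N$, i.e.\ the ratio $B_\mu^{(N)}(u_1,\ldots,u_k;v_1,\ldots,v_k)$ of Section~\ref{sec:asymptotics_bessel}. The key input absent from your sketch is Theorem~\ref{thm:bessel_asymptotics} (a refinement of \cite{GS}) expressing these ratios through the $S$-transform of the empirical measure: one gets $\exp(N H_\mu(u))$ with $H_\mu'(u)=-\log S_\mu(u)$. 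The expansion you anticipate is then of $-\log S_\mu(u)$ near $u=0$, whose value $\log\kappa_1(\mu)$ produces the centering and whose derivative $\kappa_2(\mu)/\kappa_1(\mu)^2$ produces $\gamma(t)$ (Lemma~\ref{thm:S_and_S'}). The role of steepest descent is played by a real monotonicity argument: since $-\log S_\mu$ is increasing on $[-1,0]$, summands with any $i_j>N^{1/3}$ are exponentially suppressed, while the surviving summands are compared term-by-term to the corresponding sum for $\vec\xi^{(N)}$ (Claims~\ref{claim:rational_term} and \ref{claim:bessel}). No contour deformation is involved.
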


The key assumptions, besides unitary invariance, are given by \eqref{eq:interval_containment} and \eqref{eq:variance_curve}. Condition \eqref{eq:interval_containment} ensures that the largest singular values are sufficiently unlikely to escape to infinity. We expect that this condition can be relaxed to include non-singular matrices, i.e. replace $[C^{-1},C]$ with $[0,C]$. Condition \eqref{eq:variance_curve} ensures that the time parameter of the process converges to a nontrivial deterministic limit, where the individual summand
\[ \frac{1}{N} \frac{\tr( | X^{(N)}(m)|^4 ) - \tr( |X^{(N)}(m)|^2 )^2}{\tr( |X^{(N)}(m)|^2 )^2} \]
is the increment of time that the matrix factor $X^{(N)}(m)$ contributes.

An immediate corollary for the case where the matrix factors are i.i.d. is given below.

\begin{corollary}

Suppose $X^{(N)}(1),X^{(N)}(2),\ldots$ is an i.i.d. sequence of random complex $N\times N$ matrices with right unitarily invariant distributions and denote by
\[ y^{(N)}_1(M) \ge \cdots \ge y^{(N)}_N(M) \]
the squared singular values of $X^{(N)}(M) \cdots X^{(N)}(1)$.
Suppose $X^{(N)}(1),X^{(N)}(2),\ldots$ satisfy \eqref{eq:interval_containment}, and
\begin{align*}
\tr( |X^{(N)}(m)|^2 ) \quad \mbox{and}\quad \frac{\tr( | X^{(N)}(m)|^4 ) - \tr( |X^{(N)}(m)|^2 )^2}{\tr( |X^{(N)}(m)|^2 )^2}
\end{align*}
converge in probability as $N\to\infty$, where the latter has a positive limit $\fa$. For each positive integer $h$, the process (in time $t$)
\[ \log y^{(N)}_j\left( \lfloor tN \rfloor \right) - \sum_{m=1}^{\lfloor tN \rfloor} \log \left( \tr |X^{(N)}(m)|^2 \right) - \log N, \quad \quad j = 1,\ldots,h \]
converges in finite dimensional distributions to the top $h$ paths $\xi_1(\fa t),\ldots,\xi_h(\fa t)$ of $\vec{\xi}(\fa t)$
\end{corollary}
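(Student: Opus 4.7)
The plan is to deduce the corollary directly from \Cref{thm:main} by checking its two hypotheses with $\gamma(t) := \fa t$. Hypothesis (i) is assumed verbatim. Since $\gamma(t) = \fa t$ is continuous on $\R_{>0}$ and vanishes at $0$, hypothesis (ii) reduces to establishing
\[
\frac{1}{N} \sum_{m=1}^{\lfloor tN \rfloor} Z_m^{(N)} \longrightarrow \fa\, t \quad \text{in probability,} \quad Z_m^{(N)} := \frac{\tr(|X^{(N)}(m)|^4) - \tr(|X^{(N)}(m)|^2)^2}{\tr(|X^{(N)}(m)|^2)^2}.
\]
By the i.i.d.\ hypothesis, $\{Z_m^{(N)}\}_m$ is i.i.d.\ for each fixed $N$, and $Z_1^{(N)} \to \fa$ in probability by assumption.

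The main step is to upgrade this to a triangular-array weak law of large numbers, using (i) to supply uniform moment control. Let $A_m^{(N)}$ denote the event that all squared singular values of $X^{(N)}(m)$ lie in $[C^{-1}, C]$, so that $\PP(A_m^{(N)}) = 1 - o(1/N)$. On $A_m^{(N)}$ the traces $\tr(|X^{(N)}(m)|^{2k})$ are sandwiched between $N C^{-2k}$ and $N C^{2k}$ for $k = 1,2$, so $|Z_m^{(N)}| \le K$ for some constant $K = K(C)$. A union bound over $m \le \lfloor tN \rfloor$ then yields $\PP\bigl( \bigcap_m A_m^{(N)} \bigr) = 1 - o(1)$, which reduces the problem to the truncated sum built from $\widetilde Z_m^{(N)} := Z_m^{(N)} \mathbf{1}_{A_m^{(N)}}$. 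These truncated variables are i.i.d., uniformly bounded by $K$, and converge in probability to $\fa$; by dominated convergence $\E \widetilde Z_1^{(N)} \to \fa$, and Chebyshev's inequality gives
\[
\mathrm{Var}\!\left( \frac{1}{N} \sum_{m=1}^{\lfloor tN \rfloor} \widetilde Z_m^{(N)} \right) \le \frac{tK^2}{N} \longrightarrow 0.
\]
Combined with $\frac{\lfloor tN\rfloor}{N}\,\E \widetilde Z_1^{(N)} \to \fa t$, this yields (ii).

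With both hypotheses of \Cref{thm:main} verified and $\gamma(t) = \fa t$, the corollary follows immediately. The auxiliary convergence in probability of $\tr(|X^{(N)}(m)|^2)$ is not in fact needed to invoke \Cref{thm:main}; it is consistent with the interval-containment condition and allows one to replace the random centering $\sum_{m=1}^{\lfloor tN \rfloor} \log \tr(|X^{(N)}(m)|^2)$ by a deterministic quantity to leading order, but plays no further role. There is no real obstacle: the substantive content is entirely in \Cref{thm:main}, and the reduction is a routine triangular-array weak law of large numbers made possible by the boundedness furnished by assumption (i).
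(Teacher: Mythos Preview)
Your proposal is correct and matches the paper's intent: the paper presents this result as an ``immediate corollary'' of \Cref{thm:main} with no separate proof, and you have correctly supplied the routine verification of hypothesis (ii) via a triangular-array weak law of large numbers using the boundedness from (i). Your observation that the convergence of $\tr(|X^{(N)}(m)|^2)$ is not actually needed to invoke \Cref{thm:main} is accurate and worth noting.
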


Prior to this work, convergence of the largest log singular values to $\vec{\xi}(t)$ was known for products of Ginibre and truncated unitary matrices \cite{LWW,ABK,Ahn19}. Recall that a \emph{complex Ginibre matrix} is a rectangular matrix of i.i.d. standard complex Gaussian entries, and a \emph{truncated unitary matrix} is a rectangular submatrix of a Haar distributed unitary matrix. The distribution of the former is parametrized by the matrix dimensions and the latter is parametrized by the matrix dimensions and the size of the ambient Haar unitary matrix.

For products of square Ginibre matrices, the convergence of the largest log singular values to $\vec{\xi}(t)$ for fixed time was shown by \cite{ABK,ABK20,LWW}, with generalizations to products of rectangular Ginibre matrices indicated in \cite{LWW}. Extensions to joint time convergence and for products of truncated unitary matrices were established in \cite{Ahn19}. The accessibility of these examples are due to determinantal and related structures available in those cases \cite{AKW2013,AIK2013,KKS16,BGS2018,Ahn19}. In our setting, this structure is not available in general, thus we appeal to alternative methods which we detail later.

To illustrate the relation between \Cref{thm:main} with these previously established results, we briefly review the case for products of Ginibre matrices. This is simpler to state than analogous results for truncated unitary products as it involves fewer parameters (see \cite[Theorem 1.7]{Ahn19} for details). 

Given a sequence of positive integers $\{N_i:= N_i(N)\}_{i \ge 0}$ depending on $N$, where $N_0 = N$ and $N_i \ge N$, let
\[ X^{(N)}(m) = ((G^{(N)}(m))^*(G^{(N)}(m))^{1/2} \]
where $G^{(N)}(m)$ is an $N_m\times N$ complex Ginibre matrix ($m \ge 1$), and consider the associated process $\{Y^{(N)}(M)\}_{M \in \Z_{\ge 0}}$ of matrix products. Under mild conditions on the parameters $\{N_m(N)\}_{m \in \Z_{\ge 0}}$ (see \cite[Theorem 3.4]{LWW})\footnote{The setup from \cite{LWW} was in terms of an equivalent, more elegant setup where $X^{(N)}(m)$ is a \emph{rectangular} $N_m \times N_{m-1}$ complex Ginibre, see \cite[Appendix A]{Ahn19} for details on this equivalence. We stick to products of \emph{square} matrices to remain consistent with the setting of this paper.} which correspond to condition \eqref{eq:variance_curve} in \Cref{thm:main}, the largest log singular values of $Y^{(N)}(M)$ converge to those of $\vec{\xi}(t)$ under an appropriate time change and translation, in the regime $M \asymp N$. \Cref{thm:main} does not imply this result in general because the singular values of $X^{(N)}(m)$ can get arbitrarily close to $0$ if $N_m/N$ approaches $1$, violating \eqref{eq:interval_containment}. If we include the additional hypothesis that
\begin{align} \label{eq:parameter_condition}
\liminf_{N\to\infty} \inf_{i \ge 1} \frac{N_i(N)}{N} > 1,
\end{align}
i.e. the ratios $N_i/N$ remain separated from $1$, then the result for products of Ginibre matrices now follows from \Cref{thm:main}. 

Although \Cref{thm:main} requires the additional assumption \eqref{eq:parameter_condition} to reach the full strength previous results on Ginibre products, the methods in this paper can recover these previous results (without \eqref{eq:parameter_condition}) by the integrability of Ginibre and truncated unitary matrices. However, further development is required to deal with general matrices with some singular values approaching $0$.

\subsection{Discussion}

\Cref{thm:main} appears closely related to the functional central limit theorem for $\GL(N,\C)$. Much like Donsker's invariance theorem for random walks on $\R$, a random walk on a connected Lie group $G$ converges to an appropriate diffusion when the increments approach the identity and the number of steps is properly rescaled \cite{SV1973}. Indeed, making right unitary symmetry and mean zero (of log of the increments) assumptions, random walks on $\GL(N,\C)$ converge to $\mathsf{Y}^{(N)}$ with time parametrization dictated by the increments of the original random walk. However, a key distinction which separates \Cref{thm:main} from the $\GL(N,\C)$ functional limit theorem, besides the fact that $N\to\infty$ in the former, is that the increments are not approaching the identity. The connection with $\mathsf{Y}^{(N)}$ is even more striking when comparing to previous results on global limit shapes \cite{New86a,New86b,IN92} and fluctuations \cite{GS} of products of right unitarily invariant random matrices, where the behavior was shown to \emph{non-universal} and independent of the relative growth between $N$ and $M$. 

This paper focuses on the regime $N,M \to \infty$ where $N \asymp M$. However, for products of Ginibre matrices, the regimes $N \gg M$ and $N \ll M$ are also known \cite{LWW,ABK,ABK20}. For $N \ll M$, the so-called picket fence statistics appear, where the $i$th largest log singular value concentrates near $-i + \tfrac{1}{2}$ after suitable rescaling. For $N \gg M$, GUE statistics appear. Thus the process $\vec{\xi}(t)$ may also be viewed as an interpolating process between these two regimes. The appearance of the picket fence statistics is directly related to separation of the curves in $\vec{\xi}(t)$ as $t \to \infty$ according the drift sequence $-\tfrac{1}{2}, -\tfrac{3}{2}, -\tfrac{5}{2},\ldots$. The appearance of GUE statistics corresponds to forgetting the drift as $t \to 0$.

In view of this description for Ginibre matrices, \Cref{thm:main} suggests (though does not directly imply) that for $N \ll M$, the log squared singular values of the random walk $Y^{(N)}(M)$ converge to the picket fence. This can be recast into a statement about universality of Lyapunov exponents, by interpreting the limit $N \ll M$ as taking limits $M\to\infty$ and $N\to\infty$ in that order. More precisely, Oseledets' multiplicative ergodic theorem \cite{Ose1968} asserts the existence of Lyapunov exponents
\[ \lambda_i^{(N)} = \lim_{M\to\infty} \frac{1}{2M} \log y_i^{(N)}(M), \quad \quad 1 \le i \le N \]
for fixed $N$. While these Lyapunov exponents are not universal, we expect that under general assumptions as $N\to\infty$ the largest Lyapunov exponents should converge to picket fence statistics upon properly rescaling and translating. Results of this type were established for products of truncated unitary and Ginibre matrices in \cite{AVP21}.

Likewise, \Cref{thm:main} also suggests that for $N \gg M$, the largest singular values of $Y^{(N)}(M)$ should converge to GUE statistics given by the Airy point process. In the extreme case where the number of matrix factors $M$ is fixed and $N\to\infty$, the global limit shape can be understood in terms of free probability \cite{V87}. However, local statistics are far less understood, though significant progress \cite{Ji21,DJ20} has been made in the form of regularity and local law results at the edge. The belief is that the Airy point process should appear as long as the empirical distribution of the matrix factors are suitably regular, and that the regularity can be relaxed as the number of matrix factors $M$ increases. A similar phenomenon was confirmed for local statistics of sums of random Hermitian matrices \cite{Ahn20}.

\subsection{Further directions}

A natural question is whether the large $N$ limit of the statistics of $\mathsf{Y}^{(N)}(t)$ appear for products of random matrices beyond the edge. Returning to the case of products of complex Ginibre matrices, it is known \cite{ABK20} that the bulk statistics in the regime $M \asymp N$ continue to the match that of $\mathsf{Y}^{(N)}(t)$, suggesting that this universality holds in the bulk. It is worth noting that for products of square Ginibre matrices, there is a hard edge which is absent for $\mathsf{Y}^{(N)}(t)$. 

Let us remark on several related models which do not exhibit right unitary symmetry. For products of complex Wishart matrices (matrices with centered, variance $1/n$, i.i.d. random variables not necessarily Gaussian), we conjecture that the large $N$ limit of $\mathsf{Y}^{(N)}(t)$ statistics continue to appear at the edge and bulk as well. In another direction, one can consider other symmetry classes, such as products of real matrices with right orthogonal invariance or of quaternionic matrices with right symplectic invariance. Natural examples are products of real Ginibre matrices and truncated Haar orthogonal matrices for the former, and products of quaternionic Ginibre matrices and truncated Haar symplectic matrices for the latter. These models, and one-parameter deformations of these models in the Dyson index $\beta$ in the spirit of $\beta$-ensembles, were considered in \cite{Ahn19} where tightness results were obtained at the edge in the regime $M \asymp N$. While convergence results beyond $\beta = 2$ are unavailable, we expect $N\to\infty$ limits of $\beta$-deformations of $\mathsf{Y}^{(N)}(t)$ to appear, where for $\beta = 1$ and $4$ this should be the corresponding diffusion on $\GL(N,\R)$ and $\GL(N,\HH)$. respectively (where $\HH$ is the skew field of real quaternions).

Although convergence results for products of right orthogonally invariant real matrices are not available, there is a similar model which may be accessible via existing methods. The eigenvalues of $X_M^T \cdots X_1^T A X_1 \cdots X_M$ where $X_1,\ldots,X_M$ are real Ginibre matrices and $A$ is some real antisymmetric matrix are determinantal \cite{KFI2019,FILZ2019}, structure which arises from the orthogonal Harish-Chandra-Itzykson-Zuber integral. The behavior of the eigenvalues in the regime $M \asymp N$ have not been studied, though may be accessible by analysis of correlation kernels. It would be interesting to see what behavior arises in this regime.

There is also recent progress on singular numbers of products of $p$-adic matrices \cite{Van20} which exhibit some parallels to our setting. In particular, there is a universality phenomenon where objects corresponding to Lyapunov exponents converge in the large $N$ limit to a geometric progression. It is possible that there may be analogues of our result in this setting.

\subsection{Method} \label{ssec:method}
The methods in this paper rely on an analogue of the Mellin transform for distributions of right unitarily invariant matrices. Given a random $X$ in $\GL(N,\C)$, define
\[ \Phi_X(\vec{z}) = \E \left[ \int_{\cU(N)} |U X^*X U^{-1}|^{\vec{z}} \, dU \right], \quad \quad \vec{z} \in \C^N \]
abusing notation by using the random $X$ as a subscript of $\Phi$, where
\[ |Y|^{\vec{z}} := (\det Y)^{z_N} \prod_{j=1}^{N-1} \left( \det Y_{j\times j} \right)^{z_j - z_{j+1} - 1} \]
is a generalization of the power function for positive definition matrices; $Y_{j\times j}$ is the $j\times j$ top left corner submatrix of $Y$ which itself is positive definite by Sylvester's criterion. The expectation satisfies the factorization property
\[ \Phi_{XY}(\vec{z}) = \Phi_X(\vec{z}) \Phi_Y(\vec{z}) \]
where $X,Y$ are independent random right unitarily invariant complex $N\times N$ matrices. These ideas, and their additive analogue, have been used to compute exact density formulas for a variety of matrix ensembles as in \cite{KR2019,KMZ2020,FKK2021,ZKF2021} under a common framework.

The integral within the expectation of $\Phi_X(\vec{z})$ is known as the Gelfand-Na\u{\i}mark integral. It can be explicitly evaluated \cite{GN50}
\[ \int_{\cU(N)} |U\diag(x_1,\ldots,x_N) U^{-1}| \, dU = \Delta(\rho_N) \frac{\det x_j^{z_i}}{\Delta(\vec{z}) \Delta(\vec{x})}, \quad \quad \Delta(u_1,\ldots,u_N) = \prod_{1 \le i < j \le N} (u_i - u_j) \]
and may be viewed as a multiplicative analogue of the Harish-Chandra-Itzykson-Zuber integral \cite{HC57,IZ80}. The right hand side is a (normalized) multivariate Bessel function, a continuous analogue of a Schur function. This connection with symmetric functions yields a collection of tools for the study of right unitarily invariant random matrix products. In particular, we can act on the spherical transforms by certain operators diagonalized by the multivariate Bessel functions to obtain observables for singular values of matrix products. Similar ideas were used to study other random processes with connections to symmetric functions, including polymers \cite{BC14}, measures arising from representation theory \cite{BuG15,BuG18,BuG19}, the $\beta$-Jacobi corners process from $\beta$-ensembles \cite{BG15,GZ18}, and many more. This idea was used by \cite{GS} to study global fluctuations of random matrix products, where the spherical transform was referred to as the multivariate Bessel generating function, as we will in the body of this paper due to methodological connections with their work.

Our method relies on the extraction of joint observables for singular values of $Y^{(N)}(m)$ via the appropriate family of operators. The observables and corresponding operators used in \cite{GS} are not amenable for the analysis of edge statistics in our setting. Thus, we consider a family of operators suitable for our regime corresponding to observables which give the joint Laplace transform of the log singular values of $Y^{(N)}(M)$ (over varying $M$). This leads to considerable differences from the analyses of \cite{GS}. Our observables allow us to access the edge in a similar manner as the method of high moments in \cite{Sos99} probed the edge for Wigner matrices. In short, joint Laplace transforms of the singular values are dominated by the largest singular values in our limit. Using exact formulas, these observables have expressions in terms of large combinatorial sums which can be asymptotically identified with expressions which correspond to the large $N$ limit of observables of $\mathsf{Y}^{(N)}(t)$.

\subsection{Organization}
The remainder of this paper is organized as follows. In \Cref{sec:operators} we introduce the main tools to access the observables of singular values of random matrix products: the multivariate Bessel generating function and associated operators. \Cref{sec:line_ensemble} introduces the formalism of line ensembles and is devoted to proving \Cref{thm:dyson_limit}, along with auxiliary results for later parts of the paper. \Cref{sec:S-transform} introduces the $S$-transform and the $\psi$ function of a measure on $\R_{>0}$, and similarly proves auxiliary results for later parts of the paper. \Cref{sec:asymptotics_bessel} obtains asymptotics of multivariate Bessel functions, bootstrapping off of a result of \cite{GS}, a key input for the asymptotics of the joint Laplace transforms that we want to compute. Finally, \Cref{sec:main_proof} proves the main result \Cref{thm:main}, containing the core asymptotic analysis of this paper.

\section*{Acknowledgements}

I thank Roger Van Peski for many discussions which immensely helped clarify my understanding of this topic, and Yi Sun for introducing me to the literature and early discussions which motivated me to start this work. I would also like to thank Alexei Borodin and Vadim Gorin for helpful comments and suggestions.



\section{Joint Laplace Transforms and Multivariate Bessel Functions} \label{sec:operators}

In this section, we introduce the multivariate Bessel generating function, also known as the spherical transform, which are expectations of random matrices over the multivariate Bessel function, see e.g. \cite{GM18} and references therein. The multivariate Bessel generating functions cohere well with matrix products, a fact which furnishes us with expressions for observables of squared singular values of random matrix products. 

\begin{definition}
The \emph{multivariate Bessel function} indexed by $\vec{a} = (a_1 \ge \cdots \ge a_N) \in \R^N$ is the function
\[ \cB_{\vec{a}}(\vec{z}) = \frac{\det\left[ e^{z_i a_j} \right]_{i,j=1}^N}{\Delta(\vec{z})} \]
which is holomorphic for $\vec{z} \in \C^N$, where
\[ \Delta(\vec{z}) = \prod_{1 \le i < j \le N} (z_i - z_j). \]
\end{definition}

\begin{definition}
Let $X$ be a random matrix in $\GL(N,\C)$ with right unitarily invariant distribution, Denote by $\vec{x} = (x_1,\ldots,x_N)$ the squared singular values of $X$. The \emph{multivariate Bessel generating function} of $X$ is defined by
\[ \Phi_X(z_1,\ldots,z_N) = \E\left[ \frac{\cB_{\log \vec{x}}(z_1,\ldots,z_N)}{\cB_{\log \vec{x}}(\rho_N)} \right], \]
where $\rho_N := (N-1,N-2,\ldots,0)$, given that this expectation exists in a neighborhood of $(N-1,N-2,\ldots,0)$.
\end{definition}

The normalized multivariate Bessel function within the expectation is the zonal spherical function for the Gelfand pair $\GL(N,\C), \U(N)$ \cite[Chapter VII]{Mac}. If $X$ is a scalar (i.e. $N = 1$), then the multivariate Bessel generating function reduces to
\[ \Phi_X(z) = \E\left[|X|^{2z}\right] \]
which is the Mellin transform for the distribution of $|X|^2$. Thus, for general $N$ the function $\Phi_X$ is an extension of the Mellin transform for positive definite matrices $X^*X$. Moreover, we can define a generalized power function on $N\times N$ positive definite matrices $Y$:
\[ |Y|^{\vec{z}} := (\det Y)^{z_N} \prod_{j=1}^{N-1} \left( \det Y_{j\times j} \right)^{z_j - z_{j+1} - 1} \]
where $Y_{j\times j}$ is the $j\times j$ top left corner submatrix of $Y$. Indeed, by Sylvester's criterion, the determinants are positive, thus the complex exponentials are well-defined. Then
\[ \frac{\cB_{\log \vec{x}}(\vec{z})}{\cB_{\log \vec{x}}(\rho_N)} = \Delta(\rho_N) \frac{\det\left[ x_j^{z_i} \right]_{i,j=1}^N}{\Delta(\vec{z}) \Delta(\vec{x})} = \int_{\U(N)} \left| U \diag(\vec{x}) U^{-1} \right|^{\vec{z}} \, dU \]
where the integral is over the normalized (with volume $1$) Haar measure on $\U(N)$. The latter integral is known as the Gelfand-Na\u{\i}mark integral \cite{GN50}. From this perspective, the multivariate Bessel functions are unitarily invariant (under the conjugation action) generalized power functions on positive definite matrices.

Just as products of independent random variables factor under the Mellin transform, the multivariate Bessel generating functions satisfy the following factorization property:

\begin{lemma} \label{thm:spherical_convolution}
If $X$ and $Y$ are independent $N\times N$ random matrices with right unitarily invariant distributions, then
\[ \Phi_{XY} = \Phi_X \cdot \Phi_Y. \]
\end{lemma}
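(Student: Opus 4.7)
The plan is to identify $\Phi_X(\vec{z}) = \E[\omega_{\vec{z}}(X)]$ where
\[ \omega_{\vec{z}}(X) := \frac{\cB_{\log \vec{x}}(\vec{z})}{\cB_{\log \vec{x}}(\rho_N)} = \int_{\U(N)} \left| U X^*X U^{-1} \right|^{\vec{z}} \, dU \]
is the normalized multivariate Bessel function, evaluated on the squared singular values $\vec{x}$ of $X$. From the second representation it is immediate that $\omega_{\vec{z}}$ is bi-$\U(N)$-invariant: $\omega_{\vec{z}}(k_1 X k_2) = \omega_{\vec{z}}(X)$ for all $k_1,k_2 \in \U(N)$, since conjugation of the inner integration variable by $k_1$ and absorption of $k_2$ into $X^*X$ leave the Haar integral unchanged. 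The key external input I would invoke is the fact, indicated in the excerpt via \cite[Chapter VII]{Mac}, that $\omega_{\vec{z}}$ is the zonal spherical function of the Gelfand pair $(\GL(N,\C),\U(N))$, and in particular satisfies the multiplicativity identity
\[ \int_{\U(N)} \omega_{\vec{z}}(A k B) \, dk = \omega_{\vec{z}}(A)\, \omega_{\vec{z}}(B), \qquad A,B \in \GL(N,\C). \]

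Given this identity, the proof is a short three-line computation. Right unitary invariance of $X$ gives $X \stackrel{d}{=} Xk$ for every fixed $k \in \U(N)$, so by independence $(X,Y) \stackrel{d}{=} (Xk, Y)$, and therefore $\E[\omega_{\vec{z}}(XY)] = \E[\omega_{\vec{z}}(X k Y)]$ for each $k$. Averaging this $k$-independent equality over Haar measure on $\U(N)$ and applying Fubini,
\[ \Phi_{XY}(\vec{z}) = \int_{\U(N)} \E[\omega_{\vec{z}}(XkY)] \, dk = \E\left[ \int_{\U(N)} \omega_{\vec{z}}(XkY) \, dk \right] = \E\bigl[\omega_{\vec{z}}(X)\, \omega_{\vec{z}}(Y)\bigr], \]
where the last equality uses the spherical function identity with $A = X$, $B = Y$. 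Independence of $X$ and $Y$ then factors the right-hand side as $\E[\omega_{\vec{z}}(X)]\, \E[\omega_{\vec{z}}(Y)] = \Phi_X(\vec{z})\, \Phi_Y(\vec{z})$, which is the claim.

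The only real obstacle is the spherical function identity itself. If one prefers a self-contained argument rather than a citation, one can expand both sides of the identity using the integral representation of $\omega_{\vec{z}}$, apply the polar (KAK) decomposition $AkB = k_1 \diag(\vec{s}) k_2$ to the inner product, and reduce via the Gelfand-Na\u{\i}mark formula and translation invariance of Haar measure; this reproduces the standard proof of spherical function multiplicativity in Type A. Convergence and the use of Fubini are justified by the standing assumption in the definition of $\Phi_X$ that the defining expectation exists in a neighborhood of $\rho_N$, on which the restricted integrands remain absolutely integrable.
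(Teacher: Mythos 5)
Your proof is correct and follows the same route as the paper: both reduce the claim to the multiplicativity identity for the zonal spherical function of the Gelfand pair $(\GL(N,\C),\U(N))$, cited from Macdonald, Chapter VII. You make the mechanism slightly more explicit -- using right unitary invariance to insert a free Haar unitary between $X$ and $Y$ and applying Fubini -- whereas the paper states the spherical identity with the expectation over the product's singular values already built in and then passes to the random case by taking mixtures, but the underlying content is identical.
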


\begin{proof}
If $X$ and $Y$ have deterministic squared singular values $\vec{x} \in \R_{>0}^N$ and $\vec{y} \in \R_{>0}^N$ respectively, then this follows from the identity
\[ \frac{\cB_{\log \vec{x}}(\vec{z})}{\cB_{\log \vec{x}}(\rho_N)} \frac{\cB_{\log \vec{y}}(\vec{z})}{\cB_{\log \vec{x}}(\rho_N)} = \E\left[ \frac{\cB_{\log \vec{w}}(\vec{z})}{\cB_{\log \vec{w}}(\rho_N)} \right] \]
for zonal spherical functions, where $\vec{w}$ are the squared singular values of $XY$, see \cite[Chapter VII]{Mac}. The general case follows from taking mixtures of the aforementioned case.
\end{proof}

We can iterate \Cref{thm:spherical_convolution}. Let $X(1),X(2),\ldots$ be independent $N\times N$ right-unitarily invariant complex random matrices. Given that the multivariate Bessel generating functions $\Phi_{X(m)}$ ($m \ge 1$) exist, the product $Y(M) := X(M) \cdots X(1)$ has multivariate Bessel generating function
\[ \prod_{m=1}^M \Phi_{X(m)}(z_1,\ldots,z_N). \]

Given $c \in \C$, define
\[ \cD_c := \cD_c^{(N)} := \sum_{i=1}^N \left(\prod_{j \ne i} \frac{c + z_i - z_j}{z_i - z_j}\right) \cT_{c,z_i} \]
where $\cT_{c,z_i}f(z_1,\ldots,z_N) = f(z_1,\ldots,z_i + c,\ldots,z_N)$. Then
\begin{align} \label{eq:eigenrelation}
\cD_c \cB_{\log \vec{x}}(z_1,\ldots,z_N) = \left( \sum_{i=1}^N x_i^c \right) \cB_{\log \vec{x}}(z_1,\ldots,z_N).
\end{align}

\begin{proposition} \label{thm:observable_operators}
Let $X(1),X(2),\ldots$ be independent nonsingular $N\times N$ random matrices with right unitarily invariant distributions and multivariate Bessel generating functions $\varphi_1,\varphi_2,\ldots$ respectively. Assume that the multivariate Bessel generating functions are analytic on $\C^N$. Fix real numbers $c_1,\ldots,c_k > 0$ and integers $M_1 \ge \cdots \ge M_k > M_{k+1} = 0$. Suppose $\vec{y}(M) \in \R_{>0}^N$ is the vector of squared singular values of $Y(M) = X(M) \cdots X(1)$. Then
\[ \E\left[ \prod_{i=1}^k \sum_{j=1}^N y_j(M_i)^{c_i} \right] = \left. \cD_{c_1} \prod_{m_1=M_2+1}^{M_1} \varphi_{m_1}(z_1\ldots,z_N) \cdots \cD_{c_k} \prod_{m_k=M_{k+1}+1}^{M_k} \varphi_{m_k}(z_1,\ldots,z_N) \right|_{\vec{z} = \rho_N} \]
where the operators $\cD_c$ act on everything to the right of it.
\end{proposition}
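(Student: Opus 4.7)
The plan is to prove the proposition by downward induction on $i$, building up the nested operator expression from the innermost layer outward. Set $Z_i := X(M_i) \cdots X(M_{i+1}+1)$, so that $Z_1, \ldots, Z_k$ are mutually independent right unitarily invariant matrices with multivariate Bessel generating functions $\Phi_{Z_i}(\vec{z}) = \prod_{m = M_{i+1}+1}^{M_i} \varphi_m(\vec{z})$ by \Cref{thm:spherical_convolution}, and $Y(M_i) = Z_i Z_{i+1} \cdots Z_k$. Define $R_{k+1}(\vec{z}) := 1$ and recursively $R_i(\vec{z}) := \cD_{c_i}\bigl[\Phi_{Z_i}(\vec{z}) \cdot R_{i+1}(\vec{z})\bigr]$ for $i = k, k-1, \ldots, 1$, so that the right-hand side of the proposition is exactly $R_1(\rho_N)$. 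I would prove by downward induction the stronger identity
\[ R_i(\vec{z}) = \E\Biggl[\prod_{\ell = i}^{k} \sum_{j=1}^N y_j(M_\ell)^{c_\ell} \cdot \frac{\cB_{\log \vec{y}(M_i)}(\vec{z})}{\cB_{\log \vec{y}(M_i)}(\rho_N)}\Biggr]. \]
Setting $i = 1$ and $\vec{z} = \rho_N$, where the Bessel ratio equals $1$, then recovers the desired formula.

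For the base case $i = k$, note that $Z_k = Y(M_k)$ since $M_{k+1} = 0$; commute $\cD_{c_k}$ past the expectation defining $\Phi_{Z_k}$ (permitted by the standing analyticity of $\Phi_{Z_k}$ on $\C^N$) and apply \eqref{eq:eigenrelation} to read off the eigenvalue $\sum_j y_j(M_k)^{c_k}$. For the inductive step, assume the formula for $R_{i+1}$ and evaluate $\Phi_{Z_i}(\vec{z}) R_{i+1}(\vec{z})$: by independence of $Z_i$ from $(Z_{i+1}, \ldots, Z_k)$ this product becomes a single expectation whose integrand contains the two ratios $\cB_{\log \vec{s}(Z_i)}(\vec{z})/\cB_{\log \vec{s}(Z_i)}(\rho_N)$ and $\cB_{\log \vec{y}(M_{i+1})}(\vec{z})/\cB_{\log \vec{y}(M_{i+1})}(\rho_N)$. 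Applying the zonal spherical function product identity invoked in the proof of \Cref{thm:spherical_convolution} collapses this product of two ratios into an average of the single ratio $\cB_{\log \vec{y}(M_i)}(\vec{z})/\cB_{\log \vec{y}(M_i)}(\rho_N)$, since $\vec{y}(M_i) = \vec{s}(Z_i Y(M_{i+1}))$ is the spherical convolution of $\vec{s}(Z_i)$ with $\vec{y}(M_{i+1})$. Applying $\cD_{c_i}$, commuting once more with the expectation, and invoking \eqref{eq:eigenrelation} produces the extra factor $\sum_j y_j(M_i)^{c_i}$ and completes the inductive step.

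The main obstacle is the conditional spherical-convolution step: one must verify that, after conditioning on $\vec{y}(M_\ell)$ for $\ell > i$, the random spectrum $\vec{y}(M_i)$ coincides in joint distribution with the spherical convolution of $\vec{s}(Z_i)$ and $\vec{y}(M_{i+1})$. This hinges on right unitary invariance of $Z_i$, which absorbs the singular-vector randomness of $Y(M_{i+1})$ into an independent Haar factor and thereby reduces everything to the deterministic-spectrum product identity from \cite[Chapter VII]{Mac}. The secondary, more routine technical point is the commutation of each $\cD_{c_i}$ with the expectations defining $\Phi_{Z_i}$; this is permitted by the hypothesis that every $\varphi_m$ is analytic on $\C^N$, which ensures that the finitely many argument shifts introduced by $\cD_{c_i}$ remain well-defined and allow a standard dominated convergence argument.
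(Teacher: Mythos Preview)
Your proposal is correct and follows essentially the same approach as the paper: both prove the stronger identity that the nested operator expression (your $R_i(\vec{z})$) equals $\E\bigl[\prod_{\ell \ge i} \sum_j y_j(M_\ell)^{c_\ell} \cdot \cB_{\log \vec{y}(M_i)}(\vec{z})/\cB_{\log \vec{y}(M_i)}(\rho_N)\bigr]$ by induction, with the key step being the conditional spherical convolution identity you isolate. The paper phrases the induction upward on $k$ and writes the convolution step via the conditional expectation $\E[\,\cdot \mid \vec{y}(M_{i+1})]$, whereas you run downward on $i$ and invoke the zonal product identity directly after conditioning; these are the same argument in slightly different clothing.
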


With \Cref{thm:brownian_nibm}, we can compute the multivariate Bessel generating function for Brownian motion on $\GL(N,\C)$ using the following general result:

\begin{proposition} \label{thm:brownian_spherical_function}
Suppose that $\boldsymbol{\eta}(t)$ is the vector of $N$-intersecting Brownian motions with drift $\vec{\mu} = (\mu_1 \ge \cdots \ge \mu_N)$ and $\boldsymbol{\eta}(0) = \vec{0}$. If
\[ \boldsymbol{\eta}(t) + a := (x_1(t) + a,\ldots,x_N(t) + a) \]
for some $a \in \R$, then
\[ \E\left[ \frac{\cB_{\boldsymbol{\eta}(t) + a}(\vec{z})}{\cB_{\boldsymbol{\eta}(t) + a}(\vec{\mu})} \right] = \prod_{i=1}^N \frac{e^{\frac{t}{2}(z_i + \frac{a}{t})^2}}{e^{\frac{t}{2}(\mu_i + \frac{a}{t})^2}}. \]
\end{proposition}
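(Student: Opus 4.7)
I would first reduce to the case $a = 0$ and then establish the identity via a martingale computation. From the determinant definition one has $\cB_{\vec\lambda + c \vec 1}(\vec z) = e^{c(z_1 + \cdots + z_N)} \cB_{\vec \lambda}(\vec z)$, obtained by pulling $e^{c z_i}$ out of the $i$th row of $\det[e^{z_i(\lambda_j + c)}]_{i,j}$. Applying this with $c = a$ in both numerator and denominator,
\[ \frac{\cB_{\boldsymbol{\eta}(t) + a}(\vec z)}{\cB_{\boldsymbol{\eta}(t) + a}(\vec \mu)} = e^{a \sum_i (z_i - \mu_i)} \cdot \frac{\cB_{\boldsymbol{\eta}(t)}(\vec z)}{\cB_{\boldsymbol{\eta}(t)}(\vec \mu)}. \]
Matching this with the factorization $(z_i + a/t)^2 - (\mu_i + a/t)^2 = (z_i - \mu_i)(z_i + \mu_i + 2a/t)$ appearing in the right-hand side of the proposition reduces the claim to the $a=0$ identity
\[ \E\!\left[\frac{\cB_{\boldsymbol{\eta}(t)}(\vec z)}{\cB_{\boldsymbol{\eta}(t)}(\vec \mu)}\right] = \prod_{i=1}^N \frac{e^{t z_i^2 / 2}}{e^{t \mu_i^2 / 2}}. \]

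Set $F(\vec x) := \cB_{\vec x}(\vec z)/\cB_{\vec x}(\vec \mu)$ on the open Weyl chamber. The plan is to show that $M(t) := F(\boldsymbol{\eta}(t)) \exp\bigl(-\tfrac{t}{2} \sum_i (z_i^2 - \mu_i^2)\bigr)$ is a local martingale and then take the entrance-law limit. Realize $\boldsymbol{\eta}$ as the Doob $h$-transform of Brownian motion with drift $\vec\mu$ by the harmonic function $h(\vec x) = e^{-\langle \vec \mu, \vec x\rangle} \det[e^{\mu_i x_j}]_{i,j}$, which has generator $L = \tfrac{1}{2} \Delta_x + \langle \nabla \log G, \nabla \rangle$ with $G(\vec x) := \det[e^{\mu_i x_j}]_{i,j} = \Delta(\vec x) \cB_{\vec \mu}(\vec x)$. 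Use the elementary eigenfunction identity
\[ \tfrac{1}{2} \Delta_x \det[e^{\lambda_i x_j}]_{i,j} = \tfrac{1}{2}\Bigl(\textstyle\sum_i \lambda_i^2\Bigr) \det[e^{\lambda_i x_j}]_{i,j}, \qquad \vec\lambda \in \C^N, \]
which follows by expanding the determinant over permutations. Combined with the duality $\cB_{\vec x}(\vec\mu)\Delta(\vec\mu) = \cB_{\vec\mu}(\vec x)\Delta(\vec x)$ — whence $GF = (\Delta(\vec\mu)/\Delta(\vec z))\det[e^{z_i x_j}]$ — and the standard Doob relation $LF = G^{-1}(\tfrac{1}{2}\Delta_x)(GF) - G^{-1}(\tfrac{1}{2}\Delta_x G) F$, one obtains $LF = \tfrac{1}{2}\sum_i (z_i^2 - \mu_i^2) F$, so $M$ is a local martingale.

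To conclude, a short Vandermonde expansion gives $\cB_{\vec x}(\vec w) \sim \Delta(\vec x)/\prod_{k=0}^{N-1} k!$ as $\vec x \to \vec 0$ in the Weyl chamber, for any fixed $\vec w \in \C^N$, so $F(\vec x) \to 1$. For $\vec a$ in the interior, the martingale identity gives $\E_{\vec a}[F(\boldsymbol{\eta}(t))] = F(\vec a) \exp\bigl(\tfrac{t}{2} \sum_i (z_i^2 - \mu_i^2)\bigr)$; passing to the entrance-law limit $\vec a \to \vec 0$ yields the claim. The main technical obstacle is justifying this last step rigorously — upgrading $M$ from a local martingale to a true one and establishing uniform integrability as $\vec a \to \vec 0$. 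This can be bypassed by an alternative direct route: integrate $F$ against the explicit entrance-law density $p_t(\vec x) \propto \Delta(\vec x)^2 \cB_{\vec \mu}(\vec x)\, e^{-(x_1^2 + \cdots + x_N^2)/(2t)}$ (obtained by Girsanov/Doob transform from Dyson Brownian motion, using that $\cB_{\vec\mu}(\boldsymbol{\eta}^{(0)}(t))\, e^{-t\sum\mu_i^2/2}$ is a martingale for the driftless process), where the duality cancels $\cB_{\vec\mu}(\vec x)$ and Andreief's identity reduces the integral to a closed-form determinant of one-dimensional Gaussian integrals that evaluates to the required product.
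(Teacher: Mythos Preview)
Your reduction to $a=0$ matches the paper exactly. Your primary route---computing $LF$ via the Doob $h$-transform structure and identifying $M(t)$ as a local martingale---is correct at the formal level and is \emph{not} what the paper does; it is a cleaner conceptual explanation of why the exponent is quadratic in $\vec z$. You correctly flag the genuine obstacle (true martingale property and the entrance-law limit $\vec a\to\vec 0$), and your proposed bypass---integrating $F$ against the explicit entrance density $p_t(\vec x)\propto \Delta(\vec x)^2\,\cB_{\vec\mu}(\vec x)\,e^{-|\vec x|^2/(2t)}$ and reducing via Andr\'eief---is precisely the paper's approach. The paper derives that density by a different mechanism (limiting non-intersecting Brownian bridges with start $\e\rho_N$ and endpoint $T\vec\mu$, sending $\e\to 0$ then $T\to\infty$, rather than your Girsanov/Doob route), and then carries the Andr\'eief step through explicitly: the resulting determinant has entries $M_{N-j}(z_i)=\int_{\R}(x/t)^{N-j}e^{xz_i-x^2/(2t)}\,dx$, and a short induction shows $M_n(z)$ is a monic-up-to-$\sqrt{2\pi}$ polynomial of degree $n$, so row operations collapse the determinant to $(2\pi)^{N/2}\Delta(\vec z)$. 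Your martingale argument buys a more structural proof and would generalize more readily to other root systems or diffusions, at the cost of the integrability justification; the paper's brute-force computation avoids that issue entirely but hides the reason the answer is so simple.
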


In particular, \Cref{thm:observable_operators,thm:brownian_spherical_function} imply

\begin{corollary} \label{thm:brownian_observable}
Given $c_1,\ldots,c_k > 0$ and $t_1 > \cdots > t_k > t_{k+1} = 0$, we have
\begin{align} \label{eq:brownian_observable}
\E \left[ \prod_{i=1}^k \sum_{j=1}^N e^{c_i\left(\xi_j^{(N)}(\frac{t_i}{4}) - \frac{Nt_i}{2}\right)} \right] = \left. \cD_{c_1} \prod_{i=1}^N \frac{e^{\frac{1}{2}(t_1 - t_2)(z_i - N + \frac{1}{2})^2}}{e^{\frac{1}{2}(t_1 - t_2)(-i + \frac{1}{2})^2}} \cdots \cD_{c_k} \prod_{i=1}^N \frac{e^{\frac{1}{2}(t_k - t_{k+1})(z_i - N + \frac{1}{2})^2}}{e^{\frac{1}{2}(t_k - t_{k+1})(-i + \frac{1}{2})^2}} \right|_{\vec{z} = \rho_N}
\end{align}
where the operators $\cD_c$ act on everything to the right of it. The right hand side can be expressed as a multiple contour intergral:
\begin{align} \label{eq:brownian_contour}
\begin{split}
& \E\left[ \prod_{i=1}^k \sum_{j=1}^N e^{c_i\left( \xi_j(\frac{t_i}{4}) - \frac{Nt_i}{2} \right)} \right] \\
& \quad \quad = \frac{1}{(2\pi\bi)^k} \oint \cdots \oint \prod_{1 \le i < j \le k} \frac{(z_i - z_j)(z_i + c_i - z_j - c_j)}{(z_i - z_j - c_j)(z_i + c_i - z_j)} \prod_{i=1}^k \frac{e^{\frac{t_i}{2}(z_i + c_i - \frac{1}{2})^2}}{e^{\frac{t_i}{2}(z_i - \frac{1}{2})^2}} \frac{\Gamma(z_i + c_i + N) \Gamma(z_i)}{\Gamma(z_i + c_i) \Gamma(z_i + N)} \frac{dz_i}{c_i}
\end{split}
\end{align}
where the $z_i$ contour is positively oriented around $0,-1,\ldots,-N+1$ for $1 \le i \le k$ and the $z_j$ contour contains $z_i + c_i$ and $z_i - c_j$ for each $1 \le i < j \le k$.
\end{corollary}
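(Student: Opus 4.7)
The plan is to combine \Cref{thm:observable_operators} with \Cref{thm:brownian_spherical_function}, then convert the resulting operator expression into a multiple contour integral via residue calculus. For the first equality I would observe that $e^{c_i(\xi_j^{(N)}(t_i/4) - Nt_i/2)}$ equals the $c_i$-th power of the $j$-th squared singular value of the rescaled matrix $\tilde Y(t_i/4) := e^{-Nt_i/4}\mathsf{Y}^{(N)}(t_i/4)$, whose log squared singular values $\tilde\xi(t) := \xi^{(N)}(t/4) - (Nt/2)\1$ are by \Cref{thm:brownian_nibm} a Dyson Brownian motion on $\R^N$ with drift $\tilde\mu_i = -i + 1/2$ started at the origin. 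By the Markov property of Brownian motion on $\GL(N,\C)$, $\tilde Y(t_i/4)$ decomposes as a product of independent right unitarily invariant factors $\tilde V_j := e^{-N(t_j - t_{j+1})/4} V_j$ where $V_j$ is the Brownian increment over $[t_{j+1}/4, t_j/4]$. Applying \Cref{thm:observable_operators} to this decomposition yields $\cD_{c_1}\Phi_{\tilde V_1}\cdots\cD_{c_k}\Phi_{\tilde V_k}\big|_{\vec z = \rho_N}$ for the left hand side.

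To compute $\Phi_{\tilde V_j}(\vec z)$, I apply \Cref{thm:brownian_spherical_function} with $a = 0$ to the process $\tilde\xi$ (with drift $\tilde\mu$ and time parameter $\tau = t_j - t_{j+1}$), giving $\E[\cB_{\tilde\xi(\tau)}(\vec w)/\cB_{\tilde\xi(\tau)}(\tilde\mu)] = \prod_i e^{(\tau/2) w_i^2 - (\tau/2)\tilde\mu_i^2}$. Using $\rho_N = \tilde\mu + (N - 1/2)\1$ together with the shift identity $\cB_{\vec a}(\vec z + c\1) = e^{c\sum a_i}\cB_{\vec a}(\vec z)$, the substitution $w_i = z_i - N + 1/2$ converts the formula above into $\Phi_{\tilde V_j}(\vec z) = \prod_i e^{(\tau/2)(z_i - N + 1/2)^2 - (\tau/2)(-i + 1/2)^2}$, matching the factors in \eqref{eq:brownian_observable}.

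For the contour integral equality, the argument is residue calculus. For $k = 1$: expanding $\cD_c$ and evaluating at $\vec z = \rho_N$ produces a sum over $i$ whose weights $\prod_{j \ne i}\tfrac{c + z_i - z_j}{z_i - z_j}\big|_{\vec z = \rho_N}$ are recognized as $\tfrac1c$ times $\res_{w = N - i}\prod_{l=1}^N\tfrac{w + c + l - N}{w + l - N}$, turning the sum into a single contour integral around $\{0, 1, \ldots, N-1\}$. The substitution $z = w - N + 1$ translates this into the contour around $\{0, -1, \ldots, -N+1\}$ appearing in \eqref{eq:brownian_contour}, with Gamma ratio $\Gamma(z+c+N)\Gamma(z)/(\Gamma(z+c)\Gamma(z+N)) = \prod_{l=0}^{N-1}\tfrac{z+c+l}{z+l}$ and exponential factor $e^{(t/2)(z+c-1/2)^2 - (t/2)(z-1/2)^2}$. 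For general $k$, I would iterate this residue computation, peeling off operators from the innermost outward. The interaction factors $\prod_{i<j}\tfrac{(z_i - z_j)(z_i + c_i - z_j - c_j)}{(z_i - z_j - c_j)(z_i + c_i - z_j)}$ arise by tracking how the shift $z_l \to z_l + c_l$ built into each $\cT_{c_l, z_l}$ alters the $(z_i - z_j)^{\pm 1}$ factors produced by the other $\cD$ operators; the contour ordering (with the $z_j$-contour enclosing shifted poles $z_i + c_i$ and $z_i - c_j$ for $i < j$) ensures these residues are collected without picking up unintended ones.

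The main obstacle is the combinatorial bookkeeping in the multi-operator case: verifying that the iterated residues produce exactly the interaction factors in \eqref{eq:brownian_contour} with the appropriate contour arrangement. A secondary point requiring care is the ordering of factors in the Brownian motion decomposition (since \Cref{thm:observable_operators} uses $Y(M) = X(M)\cdots X(1)$ while the Markov property of $\mathsf{Y}^{(N)}$ grows partial products by right-multiplication); this is resolved by the symmetry of the operator expression in the $\Phi$'s provided by \Cref{thm:spherical_convolution}, together with the bi-unitary invariance of $\mathsf{Y}^{(N)}$ which ensures the joint singular value distribution of partial products is invariant under factor reordering.
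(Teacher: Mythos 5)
Your proposal is correct and follows essentially the same route as the paper: decompose $\mathsf{Y}^{(N)}$ into independent multiplicative increments, feed these into \Cref{thm:observable_operators}, compute each $\Phi_t$ via \Cref{thm:brownian_nibm} and \Cref{thm:brownian_spherical_function}, and convert the operator expression to a contour integral by iterated residues (the paper likewise defers the $k$-fold residue bookkeeping to \cite[Appendix B]{Ahn19}). The only cosmetic difference is that you take drift $\tilde\mu_i=-i+\tfrac12$ with $a=0$ and convert to the $\rho_N$-normalized MBGF via the shift identity, whereas the paper takes drift $\rho_N$ with $a=-(N-\tfrac12)t$; these are equivalent and yield the same $\Phi_t$.
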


\begin{remark}
The contour integral formula \eqref{eq:brownian_contour} can be viewed as a special case of \cite[Propositions 2.8 and 2.9]{Ahn20} and is closely related to the formula \cite[Theorem B.2]{Ahn19} for observables of Schur processes. These ideas go further back to the work of \cite{BC14} where Macdonald processes, generalizations of the Schur processes, were introduced to study directed polymers. In this work, a family of contour integral formulas for observables of Macdonald processes were used to access these polymer models. 
\end{remark}

We now provide the proofs of these results.

\begin{proof}[Proof of \Cref{thm:observable_operators}]
We show that
\begin{align} \label{eq:induction_observable}
\begin{split}
& \E\left[ \left( \prod_{i=1}^k \sum_{j=1}^N y_j(M_i)^{c_i} \right) \frac{\cB_{\log \vec{y}(M_1)}(z_1,\ldots,z_N)}{\cB_{\log \vec{y}(M_1)}(\rho_N)} \right] \\
& \quad \quad = \cD_{c_1} \prod_{m_1=M_2+1}^{M_1} \varphi_{m_1}(z_1,\ldots,z_N) \cdots \cD_{c_k} \prod_{m_k=M_{k+1}+1}^{M_k} \varphi_{m_k}(z_1,\ldots,z_N)
\end{split}
\end{align}
by induction on $k$. The result follows from evaluating the expression above at $\vec{z} = \rho_N$. Indeed, \eqref{eq:eigenrelation} and \Cref{thm:spherical_convolution} imply that
\[ \cD_c\left[ \left( \sum_{j=1}^N y_j(M)^c \right) \frac{\cB_{\log \vec{y}(M)}(z_1,\ldots,z_N)}{\cB_{\log \vec{y}(M)}(\rho_N)} \right] = \cD_c \prod_{m=1}^M \varphi_m(z_1,\ldots,z_N) \]
which is the $k = 1$ base step in the induction. Next, suppose we know that
\begin{align} \label{eq:induction_hypothesis_observable}
\begin{split}
& \E\left[ \left( \prod_{i=2}^k \sum_{j=1}^N y_j(M_i)^{c_i} \right) \frac{\cB_{\log \vec{y}(M_2)}(z_1,\ldots,z_N)}{\cB_{\log \vec{y}(M_2)}(\rho_N)} \right] \\
& \quad \quad = \cD_{c_2} \prod_{m_2=M_3+1}^{M_2} \varphi_{m_2}(z_1,\ldots,z_N) \cdots \cD_{c_k} \prod_{m_k=M_{k+1}+1}^{M_k} \varphi_{m_k}(z_1,\ldots,z_N)
\end{split}
\end{align}
which is equivalent to assuming the induction hypothesis for $k-1$. Multiply both sides by
\[ \prod_{m_1 = M_2 + 1}^{M_1} \varphi_{m_1}(z_1,\ldots,z_N) \]
and apply $\cD_{c_1}$. Then the right hand side of \eqref{eq:induction_hypothesis_observable} becomes the right hand side of \eqref{eq:induction_observable}. The left hand side of \eqref{eq:induction_hypothesis_observable} becomes
\begin{align} \label{eq:induction_step_observable}
\begin{split}
& \cD_{c_1} \prod_{m_1 = M_2 + 1}^{M_1} \varphi_{m_1}(z_1,\ldots,z_N) \cdot \E\left[ \left( \prod_{i=2}^k \sum_{j=1}^N y_j(M_i)^{c_i} \right) \frac{\cB_{\log \vec{y}(M_2)}(z_1,\ldots,z_N)}{\cB_{\log \vec{y}(M_2)}(\rho_N)} \right] \\
& \quad \quad = \E\left[ \left( \prod_{i=2}^k \sum_{j=1}^N y_j(M_i)^{c_i} \right) \cD_{c_1} \frac{\cB_{\log \vec{y}(M_2)}(z_1,\ldots,z_N)}{\cB_{\log \vec{y}(M_2)}(\rho_N)} \prod_{m_1 = M_2 + 1}^{M_1} \varphi_{m_1}(z_1,\ldots,z_N) \right]
\end{split}
\end{align}
which we want to match with the left hand side of \eqref{eq:induction_observable}. Observe that
\[ \frac{\cB_{\log \vec{y}(M_2)}(z_1,\ldots,z_N)}{\cB_{\log \vec{y}(M_2)}(\rho_N)} \prod_{m_1 = M_2 + 1}^{M_1} \varphi_{m_1}(z_1,\ldots,z_N) = \E\left[ \left. \frac{\cB_{\log \vec{y}(M_1)}(z_1,\ldots,z_N)}{\cB_{\log \vec{y}(M_1)}(\rho_N)} \right| \vec{y}(M_2) \right] \]
by \Cref{thm:spherical_convolution}. In words, the left hand side is the multivariate Bessel generating function for the matrix product $X(M_1) X(M_1 - 1)\cdots X(M_2 + 1) Y(M_2) = Y(M_1)$ where we condition $Y(M_2)$ to have squared singular values given by $\vec{y}(M_2)$. Using the identity above, \eqref{eq:induction_step_observable} becomes
\begin{align*}
& \E\left[ \left( \prod_{i=2}^k \sum_{j=1}^N y_j(M_i)^{c_i} \right) \cD_{c_1} \E\left[ \left. \frac{\cB_{\log \vec{y}(M_1)}(z_1,\ldots,z_N)}{\cB_{\log \vec{y}(M_1)}(\rho_N)} \right| \vec{y}(M_2) \right] \right] \\
& \quad \quad = \E\left[ \left( \prod_{i=2}^k \sum_{j=1}^N y_j(M_i)^{c_i} \right) \E\left[ \sum_{j=1}^N y_j(M_1)^{c_1} \left. \frac{\cB_{\log \vec{y}(M_1)}(z_1,\ldots,z_N)}{\cB_{\log \vec{y}(M_1)}(\rho_N)} \right| \vec{y}(M_2) \right] \right]
\end{align*}
by commuting $\cD_{c_1}$ with the conditional expectation. Thus we obtain the right hand side of \eqref{eq:induction_observable} by consolidating the expectations.
\end{proof}

\begin{proof}[Proof of \Cref{thm:brownian_spherical_function}]
We prove the statement for $a = 0$, the general case follows from the identity
\[ \cB_{\boldsymbol{\eta} + a}(z_1,\ldots,z_N) = \left( \prod_{i=1}^N e^{a z_i} \right) \cB_{\boldsymbol{\eta}}(z_1,\ldots,z_N). \]
The density at time $t$ of $N$ Brownian bridges starting at $\vec{a}$ (at time $t = 0$), ending at $\vec{b}$ (at time $T$), and conditioned to never intersect is given by
\begin{align*}
\frac{1}{N! \det\left[ p_T(a_i,b_j) \right]_{i,j=1}^N} \det\left[ p_t(a_i, \eta_j) \right]_{i,j=1}^N \det \left[ p_{T-t}(\eta_i, b_j) \right]_{i,j=1}^N, \quad \quad p_t(x,y) = \frac{e^{-\frac{(x-y)^2}{2t}}}{\sqrt{2\pi t}}
\end{align*}
which expands out to
\begin{align*}
\frac{1}{(2\pi t(1-\frac{t}{T}))^{N/2}} \left( \prod_{i=1}^N e^{-\frac{a_i^2}{2t} + \frac{a_i^2}{2T} - \frac{b_i^2}{2(T-t)} + \frac{b_i^2}{2T}} e^{-\frac{T\eta_i^2}{2t(T-t)}} \right) \frac{\det\left[e^{\frac{a_i\eta_j}{t}} \right]_{i,j=1}^N \det\left[ e^{\frac{b_i\eta_j}{T-t}} \right]_{i,j=1}^N}{N! \det\left[ e^{\frac{a_ib_j}{T}} \right]_{i,j=1}^N}
\end{align*}
supported on $\boldsymbol{\eta} \in \R^N$, by e.g. \cite{Joh01}. Here, the density is on the unordered positions of the Brownian motions. Take $a_i = \e(N-i)$ and $b_i = T \mu_i$. Then the density becomes
\[ \frac{1}{N!(2\pi t(1-\frac{t}{T}))^{N/2}} \left( \prod_{i=1}^N e^{-\frac{\e^2 (T-t) (N-i)^2}{2tT} - \frac{tT\mu_i^2}{2(T-t)}} e^{-\frac{T\eta_i^2}{2t(T-t)}} \right) \det \left[ e^{\frac{T\mu_i \eta_j}{T-t}} \right]_{i,j=1}^N \prod_{1 \le i < j \le N} \frac{e^{\frac{\e \eta_i}{t}} - e^{\frac{\e \eta_j}{t}}}{e^{\e \mu_i} - e^{\e \mu_j}} \]
where we use the Vandermonde determinant identity
\[ \Delta(\vec{z}) = \prod_{1 \le i < j \le N} (z_i - z_j) = \det \left[ z_i^{N-j} \right]_{1 \le i,j \le N}. \]
Sending $\e \to 0$, then $T \to \infty$, we obtain
\[ \frac{1}{N!(2\pi t)^{N/2}} \left( \prod_{i=1}^N e^{-\frac{t\mu_i^2}{2}-\frac{\eta_i^2}{2t}} \right) \det\left[ e^{\mu_i \eta_j} \right]_{i,j=1}^N \frac{\Delta(\boldsymbol{\eta}/t)}{\Delta(\vec{\mu})}. \]
This is the time $t$ marginal density for Brownian motion on $\R^N$ starting at $\vec{0}$ with drift vector $\vec{\mu}$, more specifically this density corresponds to the unordered coordinates of this Brownian motion (so the density corresponds to a measure on $\R^N$ rather than on the Weyl chamber $\{x_1 \ge \cdots \ge x_N\}$). We have
\[ \frac{\cB_{\boldsymbol{\eta}}(\vec{z})}{\cB_{\boldsymbol{\eta}}(\vec{\mu})} = \frac{\det\left[ e^{z_i \eta_j} \right]_{i,j=1}^N}{\Delta(\vec{z})} \frac{\Delta(\vec{\mu})}{\det\left[e^{\mu_i \eta_j}\right]_{i,j=1}^N}. \]
Then
\begin{align*}
\E\left[ \frac{\cB_{\boldsymbol{\eta}(t)}(\vec{z})}{\cB_{\boldsymbol{\eta}(t)}(\vec{\mu})} \right] & = \frac{1}{N!(2\pi t)^{N/2}} \left( \prod_{i=1}^N e^{-\frac{t\mu_i^2}{2}} \right) \int_{\R^N} \det\left[ e^{z_i \eta_j} \right]_{i,j=1}^N \frac{\Delta(\boldsymbol{\eta}/t)}{\Delta(\vec{z})} \prod_{i=1}^N e^{-\frac{\eta_i^2}{2t}} \, d\boldsymbol{\eta} \\
& = \frac{1}{N!(2\pi t)^{N/2}} \left( \prod_{i=1}^N e^{-\frac{t\mu_i^2}{2}} \right) \frac{1}{\Delta(\vec{z})} \int_{\R^N} \det\left[ e^{z_i \eta_j} \right]_{i,j=1}^N \det \left[ \left( \frac{\eta_i}{t} \right)^{N-j} e^{-\frac{\eta_i^2}{2t}} \right]_{i,j=1}^N \, d\boldsymbol{\eta}
\end{align*}
By Andr\'eief's identity, we obtain
\begin{align} \label{eq:Andreief}
\begin{split}
\E\left[ \frac{\cB_{\boldsymbol{\eta}(t)}(\vec{z})}{\cB_{\boldsymbol{\eta}(t)}(\vec{\mu})} \right] &= \frac{1}{(2\pi t)^{N/2}} \left( \prod_{i=1}^N e^{-\frac{t\mu_i^2}{2}} \right) \frac{1}{\Delta(\vec{z})} \det \left[ \int_{\R} \left( \frac{x}{t} \right)^{N-j} e^{x z_i - \frac{x^2}{2t}} \, dx \right]_{i,j=1}^N \\
&= \frac{1}{(2\pi)^{N/2}} \left( \prod_{i=1}^N e^{\frac{t(z_i^2-\mu_i^2)}{2}} \right) \frac{1}{\Delta(\vec{z})} \det \left[ M_{N-j}(z_i) \right]_{i,j=1}^N
\end{split}
\end{align}
where
\[ M_n(z) := \int_{\R} \left(\frac{x}{t}\right)^n e^{-\frac{(x - tz)^2}{2t}} \, \frac{dx}{\sqrt{t}} = \sqrt{t} \int_\R x^n e^{-\frac{t(x - z)^2}{2}} \, dx. \]
We claim that $M_n(z)$ is a degree $n$ polynomial in $z$ with leading coefficient $\sqrt{2\pi}$. We proceed by induction on $n$. Clearly, $M_0(z) = \sqrt{2\pi}$. Observe that
\begin{align*}
M_n(0) = \sqrt{t} \int_{\R} x^n e^{-\frac{tx^2}{2}} \, dx = \sqrt{t} \int_{\R} (x - z)^n e^{-\frac{t(x-z)^2}{2}} \, dx = \sqrt{t} \sum_{k=0}^n \binom{n}{k} (-z)^{n-k} M_k(z).
\end{align*}
Rearranging, we get
\[ M_n(z) = t^{-1/2} M_n(0) - \sum_{k=0}^{n-1} \binom{n}{k} (-z)^{n-k} M_k(z) \]
By our induction hypothesis, we have
\[ M_n(z) = - \left( \sum_{k=0}^{n-1} \binom{n}{k} (-1)^{n-k} \right) \sqrt{2\pi} z^n + \mbox{lower degree terms} \]
Thus the top degree term is $\sqrt{2\pi} z^n$, completing the induction. Applying row operations, we have
\[ \det[M_{N-j}(z_i)]_{i,j=1}^N = (2\pi)^{N/2} \Delta(\vec{z}). \]
Plugging this into \eqref{eq:Andreief} completes the proof.
\end{proof}

\begin{proof}[Proof of \Cref{thm:brownian_observable}]
Set
\[ \Phi_t(z_1,\ldots,z_N) := \E \left[ \frac{\cB_{\boldsymbol{\xi}^{(N)}(\frac{t}{4}) - \frac{Nt}{2}}(z_1,\ldots,z_N)}{\cB_{\boldsymbol{\xi}^{(N)}(\frac{t}{4}) - \frac{Nt}{2}}(\rho_N)} \right]. \]
The joint distribution of $\boldsymbol{\xi}^{(N)}(\tfrac{t_k}{4}) - \tfrac{Nt_k}{2},\ldots,\boldsymbol{\xi}^{(N)}(\tfrac{t_1}{4}) - \tfrac{Nt_1}{2}$ is given by the joint distribution of the log squared singular values of
\begin{gather*}
\mathsf{X}^{(N,k)}(t_k), \\
\mathsf{X}^{(N,k-1)}(t_{k-1} - t_k) \mathsf{X}^{(N,k)}(t_k), \\
\vdots \\
\mathsf{X}^{(N,1)}(t_1 - t_2) \cdots \mathsf{X}^{(N,k-1)}(t_{k-1} - t_k) \mathsf{X}^{(N,k)}(t_k),
\end{gather*}
where $\mathsf{X}^{(N,1)}(t),\ldots,\mathsf{X}^{(N,k)}(t)$ are independent copies of $e^{-\frac{Nt}{4}} \mathsf{Y}^{(N)}(\tfrac{t}{4})$. Then \Cref{thm:observable_operators} implies
\[ \E \left[ \prod_{i=1}^k \sum_{j=1}^N e^{c_i\left(\xi_j^{(N)}(\frac{t_i}{4}) - \frac{Nt_i}{2}\right)} \right] = \left. \cD_{c_1} \Phi_{t_1 - t_2}(z_1,\ldots,z_N) \cdots \cD_{c_k} \Phi_{t_k - t_{k+1}}(z_1,\ldots,z_N) \right|_{\vec{z} = \rho_N}. \]

We compute $\Phi_t$. By \Cref{thm:brownian_nibm}, $\boldsymbol{\xi}^{(N)}(\frac{t}{4}) - \frac{Nt}{2}$ evolves as
\[ \vec{\eta}(t) - (N - \tfrac{1}{2})t \]
where $\vec{\eta}(s)$ is $N$ non-intersecting Brownian motions with drift $\rho_N = (N-1,N-2,\ldots,0)$, started at the origin. \Cref{thm:brownian_spherical_function} implies
\begin{align*}
\Phi_t(z_1,\ldots,z_N) &= \E \left[ \frac{\cB_{\vec{\eta}^{(N)}(t) - (N - \frac{1}{2})t}(z_1,\ldots,z_N)}{\cB_{\vec{\eta}^{(N)}(t) - (N - \frac{1}{2})t}(\rho_N)} \right] \\
&= \prod_{i=1}^N \frac{e^{\frac{t}{2}(z_i - N + \frac{1}{2})^2}}{e^{\frac{t}{2}(-i + \frac{1}{2})^2}}.
\end{align*}
Thus we have shown \eqref{eq:brownian_observable}.

We now show \eqref{eq:brownian_contour}. We first claim that if $f_1(z),\ldots,f_k(z)$ are entire functions, then (recalling $\cD_c$ acts on everything to the right of it in an expression)
\begin{align*}
& \cD_{c_1} \left( \prod_{i=1}^N f_1(z_i) \right) \cdots \cD_{c_k} \left( \prod_{i=1}^N f_k(z_i) \right) = \left( \prod_{i=1}^N f_1(z_i) \cdots f_k(z_i) \right) \\
& \quad \quad \times \frac{1}{(2\pi\bi)^k} \oint \cdots \oint \prod_{1 \le i < j \le k} \frac{(w_i - w_j)(w_i + c_i - w_j - c_j)}{(w_i - w_j - c_j)(w_i + c_i - w_j)} \prod_{i=1}^k \left( \prod_{\ell=i}^k \frac{f_\ell(w_i + c_i)}{f_\ell(w_i)} \right) \left( \prod_{j=1}^N \frac{w_i + c_i - z_j}{w_i - z_j} \right) \frac{dw_i}{c_i}
\end{align*}
where the $w_i$ contour is positively oriented around $z_1,\ldots,z_N$ for $1 \le i \le k$ and the $w_j$ contour contains $w_i + c_i$ and $w_i - c_j$ for $1 \le i < j \le k$. This can be proved by induction on $k$ using the residue theorem and the definition of $\cD_c$, see e.g. \cite[Appendix B]{Ahn19}.

If we set
\[ f_\ell(z) = e^{(t_\ell - t_{\ell+1})(z - N + \frac{1}{2})^2} \]
for $\ell = 1,\ldots,k$, and apply \eqref{eq:brownian_observable}, we obtain
\begin{align*}
\E \left[ \prod_{i=1}^k \sum_{j=1}^N e^{c_i\left(\xi_j^{(N)}(\frac{t_i}{4}) - \frac{Nt_i}{2}\right)} \right] =& \frac{1}{(2\pi\bi)^k} \oint \cdots \oint \prod_{1 \le i < j \le k} \frac{(w_i - w_j)(w_i + c_i - w_j - c_j)}{(w_i - w_j - c_j)(w_i + c_i - w_j)} \\
& \quad \quad \times \prod_{i=1}^k \left( \prod_{\ell=i}^k \frac{e^{\frac{(t_\ell - t_{\ell+1})}{2}(w_i + c_i - N + \frac{1}{2})^2}}{e^{\frac{(t_\ell - t_{\ell+1})}{2}(w_i - N + \frac{1}{2})^2}} \right) \left( \prod_{j=1}^N \frac{w_i + c_i - N + j}{w_i - N + j} \right) \frac{dw_i}{c_i}.
\end{align*}
Observe that
\[ \prod_{j=1}^N \frac{w + c_i - N + j}{w - N + j} = \frac{\Gamma(w_i + c_i + 1) \Gamma(w_i - N + 1)}{\Gamma(w_i + c_i - N + 1) \Gamma(w_i + 1)}. \]
By changing variables $w_i = z_i + N - 1$ and consolidating the product over $\ell$, \eqref{eq:brownian_contour} follows.
\end{proof}

\section{Limiting Line Ensemble} \label{sec:line_ensemble}

The purpose of this section is to introduce line ensembles introduced in \cite{CH14}, prove the existence of the limiting line ensemble $\vec{\xi}(t)$ and the convergence result \Cref{thm:dyson_limit}. We prove auxiliary lemmas on the way to the proof of \Cref{thm:dyson_limit} for later usage.

\begin{definition} \label{def:dyson_limit}
Let $\Sigma \subset \Z$ and $\Lambda \subset \R$ be intervals. Consider the topological space $C(\Sigma \times \Lambda)$ with the topology of uniform convergence on compact subsets of $\Sigma \times \Lambda$. We may view $C(\Sigma \times \Lambda)$ as the space $\Lambda \times C(\Lambda)$ of sequences $(\eta_i(t))_{i \in \Sigma}$ of continuous functions on $\Lambda$ by the identification $\eta(i,t) = \eta_i(t)$ for $\vec{\eta} \in C(\Sigma \times \Lambda)$. A \emph{line ensemble (on $\Lambda$)} is a probability measure on $C(\Sigma \times \Lambda)$ with respect to the Borel $\sigma$-algebra. For us, the set $\Sigma$ will always be $\{1,\ldots,k\}$ for some $k$ or $\Z_{>0}$. An \emph{infinite line ensemble} will then be a line ensemble with $\Sigma = \Z_{>0}$. A line ensemble $\vec{\eta}$ is \emph{non-intersecting} if $\eta_i(t) > \eta_j(t)$ for $i < j$ and $t \in \Lambda$ almost surely.
\end{definition}

\Cref{thm:dyson_limit} claims the existence of an infinite line ensemble $\{\vec{\xi}(t)\}_{t > 0}$ which is the limit of $\{\vec{\xi}^{(N)}(\tfrac{t}{4}) - \tfrac{Nt}{2} - \log N\}_{t > 0}$. The following theorem gives explicit expressions for certain observables of $\vec{\xi}(t)$.

\begin{theorem} \label{thm:line_ensemble}
We have:
\begin{enumerate}[(i)]
    \item For $c_1,\ldots,c_k > 0$,
    \begin{align} \label{eq:laplace_transform}
    \begin{split}
    & \E \left[ \prod_{i=1}^k \sum_{j=1}^\infty e^{c_i \xi_j(t_i)} \right] \\
    & \quad = \int \frac{dz_1}{2\pi\bi c_1} \cdots \int \frac{dz_k}{2\pi\bi c_k} \left( \prod_{1 \le i < j \le k} \frac{(z_i - z_j)(z_i + c_i - z_j - c_j)}{(z_i + c_i - z_j)(z_i - z_j - c_j)} \right) \prod_{i=1}^k \frac{e^{\frac{t_i}{2}(z_i + c_i - \frac{1}{2})^2}}{e^{\frac{t_i}{2} (z_i - \frac{1}{2})^2}}
    \frac{\Gamma(z_i)}{\Gamma(z_i + c_i)} 
    \end{split}
    \end{align}
    where the $z_i$ contour is an infinite contour positively oriented around $0,-1,-2,\ldots$ which starts at $-\infty - \bi \epsilon$ and ends at $-\infty + \bi \epsilon$ for $1 \le i \le k$, and the $z_j$ contour encloses $z_i + c_i$ and $z_i - c_j$ whenever $1 \le i < j \le k$.
    
    \item The spacetime correlation kernel for $\boldsymbol{\xi}(t)$ is given by
    \[ \rho_k(t_1,x_1;\ldots;t_k,x_k) = \det \left[ K(t_i,x_i;t_j,x_j) \right]_{1 \le i,j \le k} \]
    where
    \begin{align} \label{eq:correlation_kernel}
    K(s,x;t,y) = -\frac{1}{\sqrt{2\pi(t-s)}} e^{-\frac{(x-y)^2}{2(t-s)}} \1[t > s] + \int \frac{dz}{2\pi\bi} \int_{c - \bi \infty}^{c + \bi \infty} \frac{dw}{2\pi\bi} \frac{e^{\frac{tw^2}{2} - yw}}{e^{\frac{sz^2}{2} - xz}} \frac{1}{w - z} \frac{\Gamma(z + \frac{1}{2})}{\Gamma(w + \frac{1}{2})}.
    \end{align}
    and the $z$ contour is an infinite contour positively oriented around $-\tfrac{1}{2},-\tfrac{3}{2},-\tfrac{5}{2},\ldots$ which starts at $-\infty - \bi \epsilon$ and ends at $-\infty + \bi \epsilon$.
\end{enumerate}
\end{theorem}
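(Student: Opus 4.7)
The plan is to derive both formulas by taking $N \to \infty$ in the finite-$N$ contour integral observable (2.9) from \Cref{thm:brownian_observable}, after appropriate rescaling; the shift by $-\log N$ in \Cref{thm:dyson_limit} is exactly engineered to match a natural asymptotic of the Gamma functions appearing in (2.9). For part (i), I would start from (2.9) and divide both sides by $N^{c_1+\cdots+c_k}$, so that the left-hand side becomes the joint Laplace transform of the shifted vector $\vec{\xi}^{(N)}(\tfrac{t_i}{4}) - \tfrac{Nt_i}{2} - \log N$. The only $N$-dependent factor on the right-hand side is the ratio $\Gamma(z_i + c_i + N)/\Gamma(z_i + N)$, and by the standard asymptotic $\Gamma(z + c + N)/\Gamma(z + N) = N^c(1 + O(N^{-1}))$ uniformly on compact sets avoiding the negative real axis, rescaling by $N^{-c_i}$ cancels this factor in the limit. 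Before passing to the limit, I would deform each $z_i$-contour from the compact loop around $\{0, -1, \ldots, -(N-1)\}$ in (2.9) to an infinite Hankel-type contour surrounding all of $\{0, -1, -2, \ldots\}$, preserving the required nesting for $1 \le i < j \le k$. Since $(z_i + c_i - \tfrac{1}{2})^2 - (z_i - \tfrac{1}{2})^2 = c_i(2z_i + c_i - 1)$, the exponential factor in (2.9) reduces to $e^{t_i c_i z_i}$ up to a constant, which decays as $\Re z_i \to -\infty$; together with the Stirling estimate $\Gamma(z_i)/\Gamma(z_i + c_i) \sim z_i^{-c_i}$ off the poles, the integrand has an integrable envelope uniform in $N$, and dominated convergence yields (3.1).

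For part (ii), the Laplace transforms in (3.1) already determine the joint distribution of the limiting process, but to extract the determinantal structure and explicit kernel it is more direct to take the scaling limit of the finite-$N$ extended kernel. By \Cref{thm:brownian_nibm} and \Cref{thm:brownian_spherical_function}, $\vec{\xi}^{(N)}(\tfrac{t}{4}) - \tfrac{Nt}{2}$ is a shifted non-intersecting Brownian motion with drift $\rho_N$, whose extended correlation functions are determinantal; the kernel admits an explicit double contour integral representation, derivable from the Karlin--McGregor density appearing in the proof of \Cref{thm:brownian_spherical_function} via the Eynard--Mehta theorem, or alternatively by reading off a biorthogonal structure directly from the observable (2.9). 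After incorporating the logarithmic shift $-\log N$ and applying the same Gamma ratio asymptotic as in part (i), one obtains a limiting kernel with $\Gamma(z + \tfrac{1}{2})/\Gamma(w + \tfrac{1}{2})$ as the residual factor, plus the standard diffusive correction $-\tfrac{1}{\sqrt{2\pi(t-s)}} e^{-(x-y)^2/2(t-s)} \1[t > s]$ that appears in extended kernels of Brownian-type line ensembles whenever $t > s$. The $\tfrac{1}{2}$ offset in the Gamma arguments traces back to the shift by $-(N - \tfrac{1}{2})t$ in the identification of \Cref{thm:brownian_nibm}.

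The main obstacle will be justifying the uniform tail control needed for both the contour deformation and the dominated convergence step in (i); one must verify that the integrand on the infinite Hankel contour admits an envelope integrable uniformly in $N$, and that the nesting of the $k$ contours can be maintained throughout the deformation (which becomes delicate for larger $k$ because each $z_j$ contour must contain $z_i + c_i$ and $z_i - c_j$ for every $i < j$ as the contours are opened to infinity). A secondary difficulty in (ii) is clean bookkeeping of the shifts so that the surviving Gamma ratio is precisely $\Gamma(z + \tfrac{1}{2})/\Gamma(w + \tfrac{1}{2})$ with the correct $\tfrac{1}{2}$ normalization; errors in tracking the $-\tfrac{Nt}{2}$ and $-\log N$ shifts would produce a kernel with the wrong arguments, so care at each step is required.
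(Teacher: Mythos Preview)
Your approach is essentially the one the paper takes: the paper packages the $N\to\infty$ limit of the contour integral \eqref{eq:brownian_contour} and of the finite-$N$ extended kernel (quoted from \cite{Joh01}) into \Cref{thm:brownian_convergence}, using exactly the Stirling asymptotic $\Gamma(z+c+N)/\Gamma(z+N)=N^c(1+O(N^{-1}))$ you describe, and then deduces \Cref{thm:line_ensemble} from this together with the existence and moment-convergence lemmas (\Cref{thm:existence,thm:laplace_implies_findim}).

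Two small points where the paper differs in detail. First, for the tail control on the Hankel contour the paper does not use the Stirling estimate $\Gamma(z)/\Gamma(z+c)\sim z^{-c}$ you mention (which is delicate near the negative integers); instead it invokes the reflection formula $\Gamma(z)\Gamma(1-z)=\pi/\sin(\pi z)$ together with the Gaussian exponential to get integrability as $\Re z\to -\infty$. Second, your proposal implicitly assumes the limiting process $\vec{\xi}(t)$ exists and that the limits of the observables are observables of the limit; the paper treats this as a separate step (\Cref{thm:existence}), arguing via weak convergence of the (exponentially weighted) correlation measures and then using the determinantal structure of the limit kernel to control moment growth and conclude the moment problem is determinate. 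This is not hard, but it is the piece that actually pins the formulas \eqref{eq:laplace_transform} and \eqref{eq:correlation_kernel} to a well-defined process rather than merely being formal $N\to\infty$ limits.
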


\begin{remark}
The explicit expression for the correlation function will not be used directly in this paper, we only use the fact that it is determinantal. 
\end{remark}

The remainder of this section is devoted to the proofs of \Cref{thm:dyson_limit,thm:line_ensemble}. Our first step is to show the convergence of joint Laplace transforms and correlation functions.

\begin{proposition} \label{thm:brownian_convergence}
Fix $t_1 \ge \cdots \ge t_k > 0$. Suppose $\tau_1(N) \ge \cdots \ge \tau_k(N) > 0$ such that $t_i := \lim_{N\to\infty} \tau_i(N)$ for $1 \le i \le k$.
\begin{enumerate}[(i)]
    \item For any $c_1,\ldots,c_k > 0$,
    \begin{align*}
    & \lim_{N\to\infty} \E \left[ \prod_{i=1}^k \sum_{j=1}^N e^{c_i \left( \xi_j^{(N)}(\frac{\tau_i(N)}{4}) - \frac{N \tau_i(N)}{2} - \log N \right)} \right] \\
    & \quad \quad = \int \frac{dz_1}{2\pi\bi c_1} \cdots \int \frac{dz_k}{2\pi\bi c_k} \left( \prod_{1 \le i < j \le k} \frac{(z_i - z_j)(z_i + c_i - z_j - c_j)}{(z_i + c_i - z_j)(z_i - z_j - c_j)} \right) \prod_{i=1}^k \frac{e^{\frac{t_i}{2}(z_i + c_i - \frac{1}{2})^2}}{e^{\frac{t_i}{2} (z_i - \frac{1}{2})^2}} \frac{\Gamma(z_i)}{\Gamma(z_i + c_i)}
    \end{align*}
    where $c_1,\ldots,c_k > 0$, the $z_i$ contour is an infinite contour positively oriented around $0,-1,-2,\ldots$ which starts at $-\infty - \bi \epsilon$ and ends at $-\infty + \bi \epsilon$ for $1 \le i \le k$, and the $z_j$ contour encloses $z_i + c_i$ and $z_i - c_j$ whenever $1 \le i < j \le k$.
    
    \item Let $\rho^{(N)}_k(\tau_1,x_1;\ldots,\tau_k,x_k)$ denote the $k$th space-time correlation function of
    \[ \left( \xi_1^{(N)}\left(\frac{\tau}{4}\right) - \frac{N\tau}{2} - \log N, \ldots, \xi_N^{(N)}\left(\frac{\tau}{4}\right) - \frac{N\tau}{2} - \log N \right)_{\tau \ge 0}. \]
    Then
    \[ \lim_{N\to\infty} \rho^{(N)}_k(\tau_1(N),x_1;\ldots,\tau_k(N),x_k) = \det \left[ K(t_i,x_i;t_j,x_j) \right]_{1 \le i,j \le k} \]
    where $K(s,x;t,y)$ is given by \eqref{eq:correlation_kernel}.
\end{enumerate}
\end{proposition}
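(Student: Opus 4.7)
The plan is to prove both parts by starting from exact finite-$N$ formulas for the quantities of interest and performing asymptotic analysis of Gamma ratios via Stirling's formula, combined with a dominated convergence argument along contours that can be chosen independently of $N$.

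For part (i), I would start from the contour integral formula \eqref{eq:brownian_contour} of \Cref{thm:brownian_observable} applied at $t_i=\tau_i(N)$. The shift by $-\log N$ inside each exponential amounts to dividing the left hand side by $N^{c_1+\cdots+c_k}$, which I distribute by placing one factor $N^{-c_i}$ next to the ratio $\Gamma(z_i+c_i+N)/\Gamma(z_i+N)$. Stirling's formula gives
\[
\frac{\Gamma(z+c+N)}{N^{c}\,\Gamma(z+N)} \longrightarrow 1
\]
as $N\to\infty$, uniformly on compact subsets of $\C$ away from the negative integers. The finite-$N$ contour encloses $\{0,-1,\ldots,-(N-1)\}$; since the factor $1/\Gamma(z_i+N)$ cancels the poles of $\Gamma(z_i)$ at $-N,-N-1,\ldots$, the integrand has no further singularities and the contour may be opened to the infinite Hankel-type contour of the theorem without crossing poles. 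The outer contours for $z_j$ with $j>i$ are then taken wide enough, uniformly in $N$, to still enclose the required points $z_i+c_i$ and $z_i-c_j$. The pointwise limit of the integrand is the claimed right hand side, and dominated convergence is secured by the exponential decay of $\exp\!\bigl(\tfrac{t_i c_i}{2}(2z+c_i-1)\bigr)$ as $\Re z\to-\infty$ on the horizontal parts of the Hankel contour together with the classical exponential decay of $|\Gamma(z)|$ along vertical lines at large imaginary part. Continuity in $\tau_i(N)\to t_i$ follows because the integrand depends continuously on $t_i$ with locally uniform bounds of the type just described.

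For part (ii), I would invoke the determinantal structure of Dyson Brownian motion with drift. By \Cref{thm:brownian_nibm}, $\boldsymbol{\xi}^{(N)}(\tau/4)$ is a finite family of non-intersecting Brownian motions with drift $\rho_N$ started at the origin, hence determinantal as a spacetime point process. Its extended Karlin--McGregor/Eynard--Mehta correlation kernel decomposes into a diffusion piece $-p_{t-s}(x,y)\mathbf{1}[t>s]$ plus a finite-rank correction with a double contour integral representation whose integrand is controlled by Gaussian factors and a Gamma ratio of the shape $\Gamma(z+N+\tfrac12)/\Gamma(w+N+\tfrac12)$ arising from the drift $\rho_N$. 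After the centering $-Nt/2-\log N$ absorbs the $N$-dependence in the Gamma arguments via Stirling, the Gamma ratio collapses to $\Gamma(z+\tfrac12)/\Gamma(w+\tfrac12)$, producing precisely the kernel \eqref{eq:correlation_kernel}. With the $w$ contour chosen vertical and the $z$ contour the Hankel contour of part (i), the argument proceeds in parallel: apply Stirling, justify dominated convergence entry by entry of the $k\times k$ kernel matrix, and pass to the determinant.

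The main obstacle in both parts is producing a uniform-in-$N$ integrable dominant on the infinite contours. The strategy is to position the contours so that the Gaussian factors contribute genuine decay in both $N$ and the contour parameter, and to use effective Stirling-type bounds on the Gamma ratios (rather than mere pointwise convergence) to control the integrand uniformly. A secondary subtlety is the nested choice of contours for $z_j$ with $j>i$ in part (i) as the $z_i$ contours are opened out to infinity; this is routine but requires a consistent geometric choice valid for all sufficiently large $N$.
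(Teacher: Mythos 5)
Your proposal follows the paper's proof essentially line for line: part (i) starts from the exact contour formula \eqref{eq:brownian_contour}, absorbs the $-\log N$ shift into a $N^{-c_i}$ prefactor, replaces $N^{-c_i}\Gamma(z_i+c_i+N)/\Gamma(z_i+N)$ by $1$ via Stirling uniformly on compacts, and controls the tail of the Hankel contour by the exponential factor together with decay of the Gamma quotient (the paper invokes the reflection formula for the latter); part (ii) starts from the exact Johansson/Eynard--Mehta kernel for Dyson Brownian motion with drift $\rho_N$, applies Stirling to $\Gamma(w+N+\tfrac12)/\Gamma(z+N+\tfrac12)$ to cancel the $N^{z-w}$ produced by the $\log N$ centering, and passes to the limit entrywise in the determinant. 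The one small inaccuracy is that on the Hankel contour the relevant decay is for $\Re z \to -\infty$ with $|\Im z|$ bounded (handled by the exponential factor and reflection formula), not large-imaginary-part decay of $|\Gamma|$, but this does not affect the validity of the argument.
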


\begin{proof}[Proof of \Cref{thm:brownian_convergence}]
Let $c_1,\ldots,c_k > 0$. By \Cref{thm:brownian_observable}, we have
\begin{align*}
\begin{split}
& \E\left[ \prod_{i=1}^k \sum_{j=1}^N e^{c_i\left( \xi_j(\frac{\tau_i(N)}{4}) - \frac{N\tau_i(N)}{2} - \log N \right)} \right] = \left(\prod_{i=1}^k N^{-c_i} \right) \\
& \quad \quad \times \oint \frac{dz_1}{2\pi\bi c_1} \cdots \oint \frac{dz_k}{2\pi\bi c_k} \left( \prod_{1 \le i < j \le k} \frac{(z_i - z_j)(z_i + c_i - z_j - c_j)}{(z_i - z_j - c_j)(z_i + c_i - z_j)} \right) \prod_{i=1}^k \frac{e^{\frac{\tau_i(N)}{2}(z_i + c_i - \frac{1}{2})^2}}{e^{\frac{\tau_i(N)}{2}(z_i - \frac{1}{2})^2}} \frac{\Gamma(z_i + c_i + N) \Gamma(z_i)}{\Gamma(z_i + c_i) \Gamma(z_i + N)}
\end{split}
\end{align*}
where the $z_i$ contour is positively oriented around $0,-1,\ldots,-N+1$ for $1 \le i \le k$ and the $z_j$ contour contains $z_i + c_i$ and $z_i - c_j$ for $1 \le i < j \le k$. From Stirling's formula \cite[p141]{OLBC10} (see also \cite[Lemma 6.6]{Ahn19}) for the Gamma function, we have
\[ \frac{\Gamma(z_i + c_i + N)}{\Gamma(z_i + N)} = \frac{(z_i + c_i + N)^{z_i + c_i + N - \frac{1}{2}}}{(z_i + N)^{z_i + N - \frac{1}{2}}} e^{-c_i} (1 + O(1/N)) = N^{c_i}(1 + O(1/N)) \]
which holds uniformly on compact subsets of the $z_i$ contour. Combining this with the decay of the integrand for $\Re z \ll 0$ but $\Re z > -N-1$ and $|\Im z|$ bounded away from $0$, we obtain the desired expression
\[ \int \frac{dz_1}{2\pi\bi c_1} \cdots \int \frac{dz_k}{2\pi\bi c_k} \left( \prod_{1 \le i < j \le k} \frac{(z_i - z_j)(z_i + c_i - z_j - c_j)}{(z_i + c_i - z_j)(z_i - z_j - c_j)} \right) \prod_{i=1}^k \frac{e^{\frac{t_i}{2}(z_i + c_i - \frac{1}{2})^2}}{e^{\frac{t_i}{2} (z_i - \frac{1}{2})^2}} \frac{\Gamma(z_i)}{\Gamma(z_i + c_i)} \]
in the limit as $N\to\infty$. Note that the decay of the exponential terms at infinity along the contour is clear. To see the decay of the gamma quotient, we may use the reflection formula for the Gamma function
\[ \Gamma(z)\Gamma(1 - z) = \frac{\pi}{\sin(\pi z)}. \]

Recalling \Cref{thm:brownian_nibm}, we can explicitly write down the spacetime correlation kernel for $\{\vec{\xi}^{(N)}(\frac{t}{4}) - \frac{Nt}{2}\}_{t > 0}$ by \cite{Joh01} (see also \cite[Proposition 4.1]{CP16})
\begin{align*}
K_N(s,x;t,y) = -\frac{1}{\sqrt{2\pi(t-s)}} e^{-\frac{(x-y)^2}{2(t-s)}} \1[t > s] + \oint_\gamma \frac{dz}{2\pi\bi} \int_{\Gamma_c} \frac{dw}{2\pi\bi} \frac{e^{\frac{tw^2}{2} - yw}}{e^{\frac{sz^2}{2} - xz}} \frac{1}{w - z} \prod_{i=1}^N \frac{w + i - \frac{1}{2}}{z + i - \frac{1}{2}}
\end{align*}
where $\gamma$ is a simple closed curve positively oriented around $\{-i+\tfrac{1}{2}\}_{i=1}^N$ and $\Gamma_c:\tau \mapsto c + \bi \tau, \tau \in \R$ such that $\gamma$ and $\Gamma_c$ are disjoint. Thus
\[ \rho_k^{(N)}(\tau_1,x_1;\ldots;\tau_k,x_k) = \det \left[ K_N(\tau_i,x_i + \log N;\tau_j,x_j + \log N) \right]_{i,j=1}^k. \]
We can write
\begin{align*}
K_N(s,x+\log N;t,y+\log N) &= -\frac{1}{\sqrt{2\pi(t-s)}} e^{-\frac{(x-y)^2}{2(t-s)}} \1[t > s] \\
& \quad \quad + \oint_\gamma \frac{dz}{2\pi\bi} \int_{\Gamma_c} \frac{dw}{2\pi\bi} \frac{e^{\frac{tw^2}{2} - yw}}{e^{\frac{sz^2}{2} - xz}} \frac{N^{z-w}}{w - z} \frac{\Gamma(w + N + \frac{1}{2})}{\Gamma(z + N + \frac{1}{2})} \frac{\Gamma(z + \frac{1}{2})}{\Gamma(w + \frac{1}{2})}. \end{align*}
From Stirling's formula for the Gamma function as before, we find
\[ \frac{\Gamma(w+N+\frac{1}{2})}{\Gamma(z+N+\frac{1}{2})} = \frac{(w+N+\frac{1}{2})^{w+N}}{(z+N+\frac{1}{2})^{z+N}} e^{z - w} (1 + O(1/N)) = N^{w - z} (1 + O(1/N)). \]
Thus, we have
\begin{align*}
& \lim_{N\to\infty} K_N(s,x + \log N;t,y + \log N) \\
& \quad \quad = -\frac{1}{\sqrt{2\pi(t-s)}} e^{-\frac{(x-y)^2}{2(t-s)}} \1[t > s] + \int \frac{dz}{2\pi\bi} \int_{\Gamma_c} \frac{dw}{2\pi\bi} \frac{e^{\frac{tw^2}{2} - yw}}{e^{\frac{sz^2}{2} - xz}} \frac{1}{w - z} \frac{\Gamma(z+\frac{1}{2})}{\Gamma(w+\frac{1}{2})}
\end{align*}
where the $z$ contour is an infinite contour positively oriented around $-\tfrac{1}{2},-\tfrac{3}{2},-\tfrac{5}{2},\ldots$, starting at $-\infty- \bi \epsilon$ and ending at $-\infty+ \bi \epsilon$. For full rigor, we must control the tail of the $z$-contour for $\Re z \ll 0$. This is managed by the reflection formula for the gamma function and the $e^{-\frac{sz^2}{2}}$ term, as before.
\end{proof}

The next two lemmas are the key to proving \Cref{thm:dyson_limit,thm:line_ensemble}. They are stated in a manner convenient for later usage. The first lemma establishes the existence of a limiting process.

\begin{lemma} \label{thm:existence}
There exists a process $\{\vec{\xi}(t) := (\xi_1(t),\xi_2(t),\ldots) \}_{t > 0}$ with joint Laplace transform given by \eqref{eq:laplace_transform} and spacetime correlation kernel given by \eqref{eq:correlation_kernel}.
\end{lemma}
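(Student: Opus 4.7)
The plan is to construct $\vec{\xi}(t)$ as the $N\to\infty$ limit of $\vec{\xi}^{(N)}(t/4) - Nt/2 - \log N$, viewed as a sequence of determinantal point processes on space--time, and then verify the two claimed identities using \Cref{thm:brownian_convergence}.

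First, I would fix times $0 < t_1 < \cdots < t_k$ and view
\[ \Xi^{(N)} := \bigl\{\bigl(\xi_j^{(N)}(t_i/4) - Nt_i/2 - \log N,\ t_i\bigr) : 1 \le i \le k,\ 1 \le j \le N\bigr\} \]
as a determinantal point process on $\R \times \{t_1,\ldots,t_k\}$ with kernel $K_N$ as in the proof of \Cref{thm:brownian_convergence}(ii). By Hadamard's inequality, the $\ell$-point correlation function is bounded in terms of the diagonal values $K_N(t_i,x;t_i,x)$, so a uniform-in-$N$ local $L^\infty$ bound on $K_N$ suffices for tightness on compact subsets of $\R \times \{t_1,\ldots,t_k\}$. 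Such a bound follows from a standard saddle-point analysis of the double contour integral defining $K_N$, parallel to the computation carried out in the proof of \Cref{thm:brownian_convergence}(ii).

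Next, I would extract a weak subsequential limit $\Xi$; by \Cref{thm:brownian_convergence}(ii) and dominated convergence its correlation functions are determinantal with kernel $K$ given by \eqref{eq:correlation_kernel}, so the limit is unique and the full sequence converges. To pass from $\Xi$ to a line ensemble, the upper-tail bound
\[ \E\, \#\bigl(\Xi \cap ((R,\infty) \times \{t_i\})\bigr) = \int_R^\infty K(t_i,x;t_i,x)\,dx \xrightarrow[R\to\infty]{} 0, \]
obtained via steepest descent on the $w$-contour in \eqref{eq:correlation_kernel}, guarantees that the points of $\Xi$ at each time slice $t_i$ can almost surely be labeled in decreasing order as $\xi_1(t_i) > \xi_2(t_i) > \cdots$. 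Consistency across different time collections is automatic (the finite-dimensional distributions so obtained are marginals of one another), so the Kolmogorov extension theorem delivers a process $\{\vec{\xi}(t)\}_{t > 0}$ whose spacetime correlation kernel is $K$.

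Finally, I would verify \eqref{eq:laplace_transform}. Setting $S_i^{(N)} := \sum_{j=1}^N e^{c_i(\xi_j^{(N)}(t_i/4) - Nt_i/2 - \log N)}$ and $S_i := \sum_{j=1}^\infty e^{c_i \xi_j(t_i)}$, I would argue $S_i^{(N)} \to S_i$ in distribution and that $\{\prod_{i=1}^k S_i^{(N)}\}_N$ is uniformly integrable, so that $\E\prod_i S_i^{(N)} \to \E\prod_i S_i$. This identifies $\E\prod_i S_i$ with the limit computed in \Cref{thm:brownian_convergence}(i), namely the right-hand side of \eqref{eq:laplace_transform}. Uniform integrability reduces to a uniform-in-$N$ decay estimate
\[ \int_R^\infty e^{c_i x}\, K_N(t_i,x;t_i,x)\,dx \xrightarrow[R\to\infty]{} 0, \]
which is a strengthening of the previous kernel tail bound to accommodate the exponential weight $e^{c_i x}$. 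The main obstacle is precisely this uniform upper-tail estimate on $K_N$ at the edge $x \to +\infty$, strong enough to dominate $e^{c_i x}$ for all $c_i > 0$; everything else is standard determinantal point process machinery and Kolmogorov extension. I expect this saddle-point estimate to be essentially the same input that drives the functional convergence stated in \Cref{thm:dyson_limit}, so it is natural to develop it alongside the other auxiliary lemmas of \Cref{sec:line_ensemble}.
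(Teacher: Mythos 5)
Your proposal takes a genuinely different route from the paper. You build the limit process from the kernel side: view $\Xi^{(N)}$ as a determinantal point process, prove tightness via local $L^\infty$ bounds on $K_N$ (Hadamard's inequality), extract a subsequential weak limit, identify its kernel from \Cref{thm:brownian_convergence}(ii), then verify the Laplace transform \eqref{eq:laplace_transform} via uniform integrability of $\prod_i S_i^{(N)}$. The paper instead runs the argument in the opposite direction, following Soshnikov and Okounkov: it starts from the Laplace-transform convergence already established in \Cref{thm:brownian_convergence}(i), observes this is equivalent to convergence of the exponentially tilted $k$-point correlation measures $\varrho_k^{(N)} = e^{\theta(x_1+\cdots+x_k)}\rho_k^{(N)}$ to a finite limit, de-tilts to define the limiting correlation functions, identifies them as determinantal via \Cref{thm:brownian_convergence}(ii), and then uses the determinantal structure to show that joint moments of counting variables $\cY_{t_i}(S)$ grow at most factorially so that moment convergence yields existence. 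The paper's route has two concrete advantages here. First, the technical estimate you flag as the main obstacle --- a uniform-in-$N$ upper-tail bound on $K_N(t,x;t,x)$ strong enough to dominate $e^{c x}$ --- is precisely the information already encoded in the finiteness and convergence of the contour integral in \Cref{thm:brownian_observable}/\Cref{thm:brownian_convergence}(i), so the paper never needs to redo a saddle-point analysis of the kernel. Second, and more importantly, the paper proves \Cref{thm:existence} and \Cref{thm:laplace_implies_findim} with one argument, and \Cref{thm:laplace_implies_findim} (which takes as input only an abstract assumption of Laplace-transform convergence, with no kernel in sight) is exactly what is used in \Cref{ssec:proof_of_main} to conclude the universality theorem; your construction is tied to the specific kernel $K_N$ and would not directly yield the abstract statement needed there. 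Your sketch is plausible for \Cref{thm:existence} alone, but you would still need to actually carry out the uniform tail estimate you defer, and you would still need the paper's argument (or a substitute) for \Cref{thm:laplace_implies_findim}.
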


The next lemma links convergence of Laplace transforms with convergence in finite dimensional distributions.

\begin{lemma} \label{thm:laplace_implies_findim}
Fix $t_1 \ge \cdots \ge t_k > 0$. Let $\tau_1(N) \ge \cdots \ge \tau_k(N) > 0$ such that $t_i := \lim_{N\to\infty} \tau_i(N)$ for $1 \le i \le k$. Suppose $\left\{(\mathsf{y}_1^{(N)}(\tau) \ge \cdots \ge \mathsf{y}_N^{(N)}(\tau)\right\}_{\tau > 0}$ is a random $\R^N$-valued process such that
\begin{align} \label{eq:assum_laplace_convergence}
\lim_{N\to\infty} \E \left[ \prod_{i=1}^k \sum_{j=1}^N e^{c_i \mathsf{y}_j^{(N)}(\tau_i(N))} \right] = \E \left[ \prod_{i=1}^k \sum_{j=1}^\infty e^{c_i \xi_j(t_i)} \right]
\end{align}
for $0 < c_1,\ldots,c_k \le \e$ and some $\e > 0$ (which may vary with $k$). Then
\[ \lim_{N\to\infty} \PP\left( \mathsf{y}_j(\tau_i(N)) \le a_{i,j}: 1 \le i \le k, 1 \le j \le h \right) = \PP\left( \xi_j(t_i) \le a_{i,j}: 1 \le i \le k, 1 \le j \le h \right) \] 
for any real numbers $a_{i,j}$ ($1 \le i \le k$, $1 \le j \le h$) and any positive integer $k$.
\end{lemma}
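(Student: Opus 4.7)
The plan is to promote the assumed Laplace-transform convergence on the small strip $(0,\epsilon]^k$ to joint convergence of the multi-time point processes $\mu^{(N)}_i := \sum_j \delta_{\mathsf{y}_j^{(N)}(\tau_i(N))}$ as random counting measures, and then read off the top $h$ paths via tightness.

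For the first stage, expanding
\[
\prod_{i=1}^k \sum_j e^{c_i \mathsf{y}_j^{(N)}(\tau_i(N))} = \sum_{(j_1,\ldots,j_k)} \prod_i e^{c_i \mathsf{y}_{j_i}^{(N)}(\tau_i(N))},
\]
and (when times are allowed to coincide) grouping tuples by their coincidence pattern within each time slice, M\"obius inversion over the partition lattice recovers from the assumed Laplace transforms the integrals $\int \prod_i e^{c_i x_i} M^{(N)}_{\vec{m}}(d\vec{x})$ against all joint factorial moment measures $M^{(N)}_{\vec{m}}$ of the multi-time process. These integrals therefore converge to their $\vec{\xi}$-counterparts for all $(c_i) \in (0,\epsilon]^{|\vec{m}|}$. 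The explicit contour-integral formula \eqref{eq:laplace_transform} shows the limiting integrals are finite and analytic on $\{\Re c_i > 0\}$; combining this with the analyticity of the finite-$N$ Laplace transforms and Vitali's theorem extends the convergence to all positive $c_i$, after which Laplace-transform uniqueness promotes it to vague convergence of the joint factorial moment measures on $\R^{|\vec{m}|}$.

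For the second stage, I invoke the method of moments for point processes: joint convergence of all factorial moment measures, together with a Carleman-type growth bound ensured by the super-exponential decay of the determinantal kernel \eqref{eq:correlation_kernel} that drives $\vec{\xi}$, implies joint vague convergence of $(\mu^{(N)}_i)_{i=1}^k$ in distribution to $(\mu_i)_{i=1}^k := (\sum_j \delta_{\xi_j(t_i)})_{i=1}^k$. To then extract the top $h$ paths, upper-tail tightness of $\mathsf{y}_j^{(N)}(\tau_i(N))$ for $j \le h$ is immediate from the Markov bound $\E[e^{c\mathsf{y}_1^{(N)}(\tau_i(N))}] \le \E[\sum_j e^{c\mathsf{y}_j^{(N)}(\tau_i(N))}]$ and the monotonicity $\mathsf{y}_j \le \mathsf{y}_1$. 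Lower-tail tightness of $\mathsf{y}_h^{(N)}(\tau_i(N))$ follows from the second stage: for any $\delta>0$, choose $L$ with $\PP(\#\{j:\xi_j(t_i)\ge -L\}\ge h)\ge 1-\delta$, then joint vague convergence (with $-L$ a continuity point of the limit intensity) forces the same inequality for $\mu^{(N)}_i$ once $N$ is large, which is equivalent to $\mathsf{y}_h^{(N)}(\tau_i(N)) \ge -L$. Since $\vec{\xi}$ has a continuous determinantal kernel, its particles are almost surely distinct, so the top $h$ particles are a.s.\ continuous functionals of the point process on a compact window. Combining two-sided tightness with joint vague convergence yields the desired distributional convergence of $(\mathsf{y}_j^{(N)}(\tau_i(N)))_{i\le k,\, j\le h}$ to $(\xi_j(t_i))_{i\le k,\, j\le h}$.

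The main obstacle is the second stage, promoting moment-level convergence to distributional convergence of the random counting measures; this rests on the determinantal structure of $\vec{\xi}$ to ensure its joint law is uniquely determined by its correlation functions. A secondary difficulty is the uniform control required to push the analytic extension of Laplace-transform convergence beyond the initial strip in the first stage, which needs locally uniform boundedness of the finite-$N$ transforms on a complex domain in the $c_i$ variables.
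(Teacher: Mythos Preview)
Your overall architecture (pass from joint Laplace transforms to convergence of correlation/factorial-moment measures, then use the determinantal structure of $\vec{\xi}$ to run a moment method and read off the top $h$ particles) is the same as the paper's. The second stage is fine and essentially matches what the paper does with the counting variables $\cY_{t_i}(S)$.

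The gap is in your first stage. You propose to extend the Laplace-transform convergence from $(0,\epsilon]^k$ to all of $\{\Re c_i>0\}$ via Vitali, and then invoke ``Laplace-transform uniqueness'' to get vague convergence of the factorial-moment measures. Neither step is justified by the hypotheses. For Vitali you need local uniform boundedness of the finite-$N$ transforms on a complex domain, and nothing in the lemma gives you control of $\E\big[\prod_i\sum_j e^{c_i\mathsf{y}_j^{(N)}(\tau_i(N))}\big]$ once any $c_i$ leaves $(0,\epsilon]$; you flag this yourself but do not resolve it. Even granting the extension, convergence of one-sided Laplace transforms $\int e^{\sum c_i x_i}\,dM$ for $c_i>0$ does not in general force vague convergence of positive measures on $\R^{|\vec m|}$ (push forward by $x\mapsto e^x$ and you are asking for a Stieltjes moment problem to be determinate).

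The paper sidesteps both issues with a single device: set $\theta=\epsilon/2$ and consider the tilted measures $\varrho^{(N)}_k(d\vec x)=e^{\theta\sum_i x_i}\rho^{(N)}_k(d\vec x)$. The hypothesis at $c_i=\theta$ shows these have uniformly bounded total mass, and the hypothesis for $c_i\in(0,\epsilon)$ gives convergence of their \emph{two-sided} moment generating functions on the open cube $(-\theta,\theta)^k$. That is exactly the classical situation in which MGF convergence implies weak convergence of finite measures, with no analytic continuation and no moment-problem issue. Undoing the tilt then yields weak convergence of $\rho^{(N)}_k$ on $\R^k$. Replacing your Vitali/uniqueness step by this exponential-tilting argument makes the first stage go through cleanly; the rest of your proposal can stay as is.
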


\begin{proof}[Proof of \Cref{thm:existence,thm:laplace_implies_findim}]
The argument below closely follows the ideas from \cite[Section 5]{Sos99} and \cite[Section 4.1.3]{Oko00} to show that the convergence of Laplace transforms of the correlation functions implies the desired convergence in finite dimensional distributions. Let $\rho^{(N)}_k(\tau_1,x_1;\ldots;\tau_k,x_k)$ denote the space-time correlation function for the process
\[ \left(\mathsf{y}_1^{(N)}(\tau),\ldots,\mathsf{y}_k^{(N)}(\tau) \right). \]
Our assumption \eqref{eq:assum_laplace_convergence} implies the existence of the limits
\begin{align} \label{eq:limits_correlation_functions}
\lim_{N\to\infty} \int_{\R^k} e^{c_1x_1 + \cdots + c_k x_k} \rho^{(N)}_k(\tau_1(N),x_1;\ldots;\tau_k(N),x_k) dx_1 \cdots dx_k
\end{align}
for $0 < c_1,\ldots,c_k < \e$ where the limit is given by a finite linear combination of \eqref{eq:laplace_transform}. We want to show that this limit is given by some limiting measure $\rho_k(t_1,x_1;\ldots;t_k,x_k)$. For this, define the measure
\[ \varrho^{(N)}_k(x_1,\ldots,x_k)dx_1 \cdots dx_k := e^{\theta x_1 + \cdots + \theta x_k} \rho^{(N)}_k(\tau_1(N),x_1;\ldots;\tau_k(N),x_k) dx_1 \cdots dx_k \]
where $\theta = \e/2$. The existence and form of the limits \eqref{eq:limits_correlation_functions} implies the weak convergence of $\varrho^{(N)}_k$ to some limiting finite measure $\varrho_k$ as measures on $\R^k$ where the latter measure is finite. Define $\rho_k$ by
\[ \rho_k(t_1,x_1;\ldots;t_k,x_k) dx_1 \cdots dx_k:= e^{-\theta x_1 - \cdots - \theta x_k} \varrho_k(x_1,\ldots,x_k) dx_1 \cdots dx_k \]
where we note the suppression of the dependence on the $\tau$'s and $t$'s in the notation for $\varrho^{(N)}_k$ and $\varrho_k$. Thus
\[ \rho^{(N)}_k(\tau_1(N),x_1;\ldots;\tau_k(N),x_k) dx_1 \cdots dx_k \to \rho_k(t_1,x_1;\ldots;t_k,x_k) dx_1 \cdots dx_k \]
weakly on $\R^k$. By \Cref{thm:brownian_convergence}, this convergence holds in particular for $\mathsf{y}^{(N)}(\tau) = \vec{\xi}^{(N)}(\frac{\tau}{4}) - \frac{N\tau}{2} - \log N$ so that
\[ \rho_k(t_1,x_1;\ldots,t_k,x_k) = \det\left[ K(t_i,x_i;t_j,x_j) \right]_{1 \le i,j \le k} \]
where $K(s,x;t,y)$ is given by \eqref{eq:correlation_kernel}.

The weak convergence of the correlation functions implies that the joint moments of random variables of the form
\[ \cY_{\tau_i(N)}^{(N)}(S) := |\{j: \mathsf{y}_j^{(N)}(\tau_i(N)) \in S \}|, \quad \quad S \subset [c,\infty), \quad \quad 1 \le i \le k, \quad \quad c > 0  \]
converge to corresponding joint moments of some limiting random variables
\[ \cY_{t_i}(S), \quad \quad S \subset [c,\infty), \quad \quad 1 \le i \le k, \quad \quad c > 0. \]

Since the limit $\rho_k$ is determinantal, the joint moments of the $\cY_{t_i}(S)$ do not grow faster than factorials so that the convergence of joint moments implies convergence in distribution. Therefore the probabilities
\[ \PP\left(\mathsf{y}_j^{(N)}(\tau_i(N)) \le a_{i,j}: 1 \le i \le k, 1 \le j \le h\right) \]
converge as $N\to\infty$ as they can be expressed as a finite linear combination of probabilities of the form
\[ \PP\left( \cY_{\tau_1(N)}^{(N)}(S_{1,1}) = n_{1,1},\ldots \cY_{\tau_1(N)}^{(N)}(S_{1,r_1}) = n_{1,r_1}, \ldots, \cY_{\tau_k(N)}^{(N)}(S_{k,1}) = n_{k,1},\ldots,\cY_{\tau_k(N)}^{(N)}(S_{k,r_k}) = n_{k,r_k} \right), \]
where the sets $S_{i,r}$ are among $(a_{i,1},\infty), (a_{i,2},a_{i,1}],\ldots,(a_{i,h},a_{i,h-1}]$. This proves the existence of the limit (in finite dimensional distributions) process $\{(\xi_1,\xi_2,\ldots)\}_{t > 0}$, where the Laplace transform and spacetime correlation kernel are necessarily given by \eqref{eq:laplace_transform} and \eqref{eq:correlation_kernel}. Thus \Cref{thm:existence,thm:laplace_implies_findim} follow.
\end{proof}

\begin{proof}[Proof of \Cref{thm:dyson_limit} and \Cref{thm:line_ensemble}]
We want to upgrade the convergence in finite dimensional distributions of
\[ \left( \xi_1^{(N)}(\tfrac{t}{4}) - \tfrac{Nt}{2} - \log N,\ldots, \xi_N^{(N)}(\tfrac{t}{4}) - \tfrac{Nt}{2} - \log N \right) \]
implied by \Cref{thm:brownian_convergence} and \Cref{thm:laplace_implies_findim} to the stronger notion of convergence of line ensembles for \Cref{thm:dyson_limit}. The machinery for this is supplied by \cite[Proposition 3.6]{CH14}. We can argue as in \cite[Proposition 3.12]{CH14} to check that our line ensembles satisfy the hypotheses of \cite[Proposition 3.6]{CH14}, using the determinantal structure of the line ensembles from \Cref{thm:brownian_convergence}. The statements in \cite{CH14} are for line ensembles on $[-T,T]$, so minor modifications in the statement of hypotheses need to be made to obtain the convergence of our line ensembles on $[\tfrac{1}{T},T]$. \Cref{thm:line_ensemble} follows from \Cref{thm:existence,thm:laplace_implies_findim}.
\end{proof}

\section{The \texorpdfstring{$S$}{S}-Transform and \texorpdfstring{$\psi$}{Psi Function}} \label{sec:S-transform}

Given a probability measure $\mu$ on $\R_{\ge 0}$, we can define its $\psi$-function and $S$-transform. The former is a generating function for the moments of $\mu$ and the latter plays the role of the log characteristic function from classical probability in the context of free probability, where the multiplicative free convolution corresponds to summation of independent random variables, see e.g. \cite{V87,BV92}. We collect several properties of these functions for the analysis in subsequent sections.

\begin{definition}
Given a probability measure $\mu$ supported in $\R_{\ge 0}$, let
\[ \psi_\mu(z) := \int \frac{zx}{1 - zx} \, d\mu(x), \quad \quad z \in \C \setminus \supp \mu. \]
\end{definition}

\begin{definition}
Let $\cM$ denote the set of compactly supported Borel probability measures on $\R_{> 0}$, in particular $\inf \supp \mu > 0$ for $\mu \in \cM$. We view $\cM$ as a topological space under the weak topology. Given a closed interval $I \subset \R_{>0}$, let $\cM_I \subset \cM$ denote the subset of probability measures supported in $I$, which is compact under the weak topology.
\end{definition}

Assume that $\mu \in \cM$. Then $\psi_\mu$ is analytic on $(\C \cup \{\infty\}) \setminus J$ where $J$ is some bounded interval in $\R_{>0}$ which contains $\{x^{-1}: \supp \mu \}$. Moreover,
\[ \psi_\mu'(z) = \int \frac{x}{(1 - zx)^2} \, d\mu(x) \]
which is positive for $z \le 0$. Thus there exists an inverse $\psi_\mu^{-1}$ defined in a neighborhood of $[-1,0]$ which is meromorphic with a simple pole at $-1$ and a zero at $0$.

\begin{definition}
The \emph{$S$-transform} of $\mu \in \cM$ is given by
\[ S_\mu(u) := \frac{1+u}{u} \psi_\mu^{-1}(u). \]
\end{definition}

In view of the discussion above, for $\mu$ compactly supported in $\R_{>0}$, the $S$-transform is defined in a neighborhood of $[-1,0]$. From our discussion, we see that the following properties hold:

\begin{proposition} \label{thm:psi}
Fix a compact subset $I \subset \R_{>0}$. Then there exists a neighborhood $U \subset \C$ of $[-1,0]$ such that for all $\mu \in \cM_I$
\begin{enumerate}[(i)]
    \item $\psi_\mu^{-1}(z)$ is well-defined, bijective, meromorphic function on $U$ with a unique pole at $-1$ and zero at $0$;
    
    \item $S_\mu(z)$ is holomorphic with no zeros on $U$, and
    
    \item the maps $\mu \mapsto S_\mu$ and $\mu \mapsto \psi_\mu^{-1}$ on $\cM_I$ are continuous where the topology of the images are with respect to uniform convergence on compact subsets of $U$.
\end{enumerate}
\end{proposition}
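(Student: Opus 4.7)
The plan is to first construct $\psi_\mu^{-1}$ for each single $\mu \in \cM_I$ via the inverse function theorem applied on the Riemann sphere $\wh{\C}$, and then upgrade to a uniform-in-$\mu$ neighborhood by compactness of $\cM_I$ under weak convergence. Write $I \subset [a,b]$ with $a > 0$. For $\mu \in \cM_I$, the function $\psi_\mu(z) = \int \tfrac{zx}{1-zx}\,d\mu(x)$ extends to a meromorphic function on $\wh{\C}$ that is holomorphic off $[1/b, 1/a]$, with $\psi_\mu(0) = 0$ and $\psi_\mu(\infty) = -1$, and maps $(-\infty, 0]$ strictly increasingly onto $(-1, 0]$. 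The derivative at $0$ is $\psi_\mu'(0) = \int x\,d\mu(x) \in [a, b]$, uniformly positive in $\mu \in \cM_I$. To handle the point $z = \infty$, I would work in the chart $\zeta = -1/z$, where $\tilde\psi_\mu(\zeta) := \psi_\mu(-1/\zeta) + 1 = \int \tfrac{\zeta}{\zeta + x}\,d\mu(x)$ is holomorphic near $\zeta = 0$ with derivative $\int x^{-1}\,d\mu(x) \in [1/b, 1/a]$. These endpoint estimates, together with positivity of $\psi_\mu'$ on compact subintervals of $(-\infty, 0)$, give via the inverse function theorem a neighborhood $W_\mu$ of $[-\infty, 0]$ in $\wh{\C}$ on which $\psi_\mu$ is biholomorphic, with $\psi_\mu(W_\mu)$ containing an open neighborhood $U_\mu$ of $[-1, 0]$.

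For uniformity I would use that $\cM_I$ is weakly compact and that $\mu \mapsto \psi_\mu$ is continuous into the space of holomorphic functions on any fixed compact subset of $\wh{\C} \setminus [1/b, 1/a]$ under uniform convergence. By Hurwitz's theorem (or Rouch\'e applied to $\psi_\mu(z) - u$), the biholomorphism property of $\psi_{\mu_0}$ on a relatively compact subset of $W_{\mu_0}$ and the containment of $U_{\mu_0}$ in its image persist under small weak perturbations of $\mu_0$. Thus each $\mu_0 \in \cM_I$ admits a weak neighborhood $V_{\mu_0} \subseteq \cM_I$ and an analytic neighborhood $U_{\mu_0}$ of $[-1, 0]$ such that $\psi_\mu^{-1}$ is well-defined and meromorphic on $U_{\mu_0}$ for all $\mu \in V_{\mu_0}$, with a unique simple zero at $0$ (from $\psi_\mu(0) = 0$) and a unique simple pole at $-1$ (from $\psi_\mu(\infty) = -1$). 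Extracting a finite subcover from $\{V_{\mu_0}\}$ and intersecting the corresponding $U_{\mu_0}$'s gives the uniform $U$ required by (i).

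Items (ii) and (iii) are then quick consequences. The factor $(1+u)/u$ in $S_\mu(u) = \tfrac{1+u}{u}\psi_\mu^{-1}(u)$ has a simple pole at $0$ and simple zero at $-1$ that exactly cancel the simple zero and simple pole of $\psi_\mu^{-1}$, so $S_\mu$ is holomorphic on $U$. Nonvanishing reduces to $S_\mu(0) = 1/\psi_\mu'(0) \in [1/b, 1/a]$ and $S_\mu(-1) = \int x^{-1}\,d\mu(x) \in [1/b, 1/a]$ (both computed from leading asymptotics of $\psi_\mu^{-1}$ at its zero and pole); neither factor vanishes elsewhere in $U$. For (iii), continuity of $\mu \mapsto \psi_\mu^{-1}$ on compact subsets of $U$ follows from continuity of $\mu \mapsto \psi_\mu$ plus the same Rouch\'e argument, and continuity of $\mu \mapsto S_\mu$ follows from multiplication by the fixed factor $(1+u)/u$. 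The main technical point is the uniform construction of $W_\mu$ in the second paragraph, specifically setting up Hurwitz's theorem across the endpoint at infinity via the chart $\zeta = -1/z$; once uniformity is in place, everything else is essentially formal.
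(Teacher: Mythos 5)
Your proof is correct and takes essentially the same approach the paper only sketches (local invertibility from positivity of $\psi_\mu'$ on the sphere, then compactness of $\cM_I$ for uniformity). The paper gives no detailed argument for this proposition, and your elaboration—passing to the chart $\zeta = -1/z$ at $\infty$, then using Rouch\'e/Hurwitz together with continuity of $\mu \mapsto \psi_\mu$ to get a uniform neighborhood—supplies exactly the steps it omits.
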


Here are some properties of the $S$-transform which follow from \cite[Proposition 3.1]{BV92}:

\begin{proposition}
\begin{enumerate}[(i)]
    \item $S_\mu'(u) \le 0$ for $u \in [-1,0]$.
    \item $S_\mu(u) > 0$ for $u \in [-1,0]$.
    \item $\overline{S_\mu(u)} = S_\mu(\overline{u})$.
\end{enumerate}
\end{proposition}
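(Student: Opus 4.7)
The plan is to prove the three properties separately; (ii) and (iii) are short structural statements, while (i) is the principal calculation. For (iii), since $\mu$ is a positive Borel measure on $\R$, conjugating inside the integral gives $\overline{\psi_\mu(z)} = \psi_\mu(\overline{z})$. By analytic continuation, this symmetry transfers to $\psi_\mu^{-1}$ on the neighborhood $U$ of $[-1,0]$ furnished by \Cref{thm:psi}, and since $\overline{(1+u)/u} = (1+\overline{u})/\overline{u}$, the formula $S_\mu(u) = \tfrac{1+u}{u}\,\psi_\mu^{-1}(u)$ inherits the conjugation symmetry.

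For (ii), I would first pin down the image of $\psi_\mu$ on $(-\infty,0)$. For each fixed $x > 0$, the integrand $zx/(1-zx)$ is strictly increasing in $z \in (-\infty, 0)$ with values in $(-1, 0)$, approaching $-1$ as $z \to -\infty$ and $0$ at $z = 0$. Combined with $\psi_\mu'(z) = \int x/(1-zx)^2\,d\mu(x) > 0$, this shows $\psi_\mu$ is a strictly increasing bijection from $(-\infty,0)$ onto $(-1,0)$, so $\psi_\mu^{-1}(u) < 0$ for $u \in (-1,0)$. Since $(1+u)/u < 0$ on this interval, the product $S_\mu(u)$ is strictly positive there; continuity handles the endpoints $u = 0$ and $u = -1$.

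Property (i) is the main obstacle. Setting $w(u) = \psi_\mu^{-1}(u)$, differentiation of $S_\mu(u) = (1+u)w(u)/u$ together with $w'(u) = 1/\psi_\mu'(w)$ yields
\[ S_\mu'(u) = \frac{u(1+u)\,w'(u) - w(u)}{u^2}. \]
Using the integral identities $-u = \int \tfrac{-wx}{1-wx}\,d\mu$, $1+u = \int \tfrac{1}{1-wx}\,d\mu$, and $-w\,\psi_\mu'(w) = \int \tfrac{-wx}{(1-wx)^2}\,d\mu$, the inequality $S_\mu'(u) \le 0$ reduces, after tracking signs on $(-1,0)$, to the covariance bound
\[ \int \frac{-wx}{(1-wx)^2}\,d\mu(x) \;\le\; \int \frac{-wx}{1-wx}\,d\mu(x)\,\cdot\, \int \frac{1}{1-wx}\,d\mu(x). \]
The workhorse at this step is the Chebyshev (FKG) sum inequality applied to $p(x) = -wx/(1-wx)$ and $q(x) = 1/(1-wx)$: for $w < 0$, the function $p$ is strictly increasing and $q$ strictly decreasing on $\R_{>0}$, so their $\mu$-covariance is nonpositive, $\int pq\,d\mu \le \int p\,d\mu \cdot \int q\,d\mu$, which is exactly the required inequality. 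The endpoints $u = 0, -1$ then follow from continuity of $S_\mu'$.
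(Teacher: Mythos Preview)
Your argument is correct. The paper does not give its own proof of this proposition: it simply records these properties as consequences of \cite[Proposition 3.1]{BV92}. Your write-up instead supplies a self-contained derivation. Parts (ii) and (iii) are routine and match what one would do from scratch. For (i), your reduction $S_\mu'(u)\le 0 \Leftrightarrow u(1+u)\le w\psi_\mu'(w)$ (with $w=\psi_\mu^{-1}(u)$) is correct, and rewriting both sides via the integral identities $-u=\int\frac{-wx}{1-wx}\,d\mu$, $1+u=\int\frac{1}{1-wx}\,d\mu$, $-w\psi_\mu'(w)=\int\frac{-wx}{(1-wx)^2}\,d\mu$ does indeed reduce the claim to the Chebyshev covariance inequality for the increasing function $p(x)=\frac{-wx}{1-wx}$ and decreasing function $q(x)=\frac{1}{1-wx}$ when $w<0$. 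This is a clean elementary route that avoids appealing to the cited reference; the trade-off is only that the Bercovici--Voiculescu paper places these facts in the broader analytic framework for the $S$-transform, whereas your proof is specific to the real-interval setting needed here.
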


We record a lemma which evaluates the $S$-transform and its first and second derivatives at $0$.

\begin{lemma} \label{thm:S_and_S'}
Suppose $\mu \in \cM$. Let
\[ \kappa_1(\mu) := \int x \, d\mu(x), \quad \quad \kappa_2(\mu) := \int x^2 \, d\mu(x) - \left( \int x \, d\mu(x) \right)^2 \]
denote the mean and variance of $\mu$ respectively. Then
\begin{gather*}
S_\mu(0) = \frac{1}{\kappa_1(\mu)}, \quad \quad S_\mu'(0) = -\frac{\kappa_2(\mu)}{\kappa_1(\mu)^3}, \\
S_\mu''(0) = 4 \frac{\left( \int x^2 \, d\mu(x) \right)^2}{\left( \int x \, d\mu(x) \right)^5} - 2 \frac{\int x^3 \, d\mu(x)}{\left( \int x \, d\mu(x) \right)^4} - 2 \frac{\int x^2 \, d\mu(x)}{\left( \int x \, d\mu(x) \right)^3}.
\end{gather*}
\end{lemma}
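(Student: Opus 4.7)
The plan is a direct power series computation. Since $\mu \in \cM$ is compactly supported in $\R_{>0}$, the function $\psi_\mu(z)$ is analytic in a neighborhood of $0$ with series expansion
\[ \psi_\mu(z) = \sum_{n \ge 1} m_n z^n, \qquad m_n := \int x^n \, d\mu(x), \]
obtained by expanding the geometric series $\tfrac{zx}{1-zx} = \sum_{n \ge 1} (zx)^n$ and integrating termwise (valid for $|z|$ smaller than $(\sup \supp \mu)^{-1}$). Since $m_1 = \kappa_1(\mu) > 0$, this power series has nonzero derivative at $0$, so by the analytic inverse function theorem there is a unique analytic inverse $g(u) := \psi_\mu^{-1}(u)$ near $0$ with $g(0) = 0$, with expansion
\[ g(u) = a_1 u + a_2 u^2 + a_3 u^3 + O(u^4). \]

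I would then determine $a_1, a_2, a_3$ by substituting into $\psi_\mu(g(u)) = u$ and matching coefficients order by order. At order $u$: $m_1 a_1 = 1$, so $a_1 = 1/m_1$. At order $u^2$: $m_1 a_2 + m_2 a_1^2 = 0$, yielding $a_2 = -m_2/m_1^3$. At order $u^3$: $m_1 a_3 + 2 m_2 a_1 a_2 + m_3 a_1^3 = 0$, yielding
\[ a_3 = \frac{2 m_2^2}{m_1^5} - \frac{m_3}{m_1^4}. \]

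Finally, from $S_\mu(u) = \tfrac{1+u}{u} g(u) = (1+u)(a_1 + a_2 u + a_3 u^2 + O(u^3)) = a_1 + (a_1 + a_2) u + (a_2 + a_3) u^2 + O(u^3)$, one reads off
\[ S_\mu(0) = a_1 = \frac{1}{m_1}, \qquad S_\mu'(0) = a_1 + a_2 = \frac{m_1^2 - m_2}{m_1^3} = -\frac{\kappa_2(\mu)}{m_1^3}, \]
\[ \tfrac{1}{2} S_\mu''(0) = a_2 + a_3 = -\frac{m_2}{m_1^3} + \frac{2 m_2^2}{m_1^5} - \frac{m_3}{m_1^4}, \]
which matches the claimed formulas after multiplying the last identity by $2$. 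There is no real obstacle here; the only nuance is verifying the domain on which the series expansions converge, which follows immediately from compact support of $\mu$ away from $0$ and the inverse function theorem as noted above.
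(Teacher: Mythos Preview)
Your proof is correct and follows essentially the same approach as the paper: expand $\psi_\mu$ as a power series in $z$, invert it term by term to obtain the first three coefficients of $\psi_\mu^{-1}$, and then read off the Taylor coefficients of $S_\mu(u) = \tfrac{1+u}{u}\psi_\mu^{-1}(u)$ at $u=0$. The computations and final identifications agree with those in the paper.
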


\begin{proof}
From the expansion
\[ \psi_\mu(z) = z \int x \, d\mu(x) + z^2 \int x^2 \, d\mu(x) + z^3 \int x^3 \, d\mu(x) + O(|z|^4), \quad \quad |z| \to 0, \]
we get
\[ \psi_\mu^{-1}(u) = u \frac{1}{\int x \, d\mu(x)} - u^2 \frac{\int x^2 \, d\mu(x)}{ \left( \int x \, d\mu(x) \right)^3} + u^3 \left(2 \frac{\int x^2 \, d\mu(x)}{\left( \int x \, d\mu(x) \right)^5} - \frac{\int x^3 \, d\mu(x)}{\left( \int x \, d\mu(x) \right)^4} \right) + O(|u|^4) , \quad \quad |u| \to 0 \]
so that
\begin{align*}
S_\mu(u) =& \frac{1}{\int x \, d\mu(x)} + \left( \frac{1}{\int x \, d\mu(x)} - \frac{\int x^2 \, d\mu(x)}{\left(\int x \, d\mu(x)\right)^3} \right) u \\
& \quad \quad + \left( 2 \frac{\left( \int x^2 \, d\mu(x) \right)^2}{\left( \int x \, d\mu(x) \right)^5} - \frac{\int x^3 \, d\mu(x)}{\left( \int x \, d\mu(x) \right)^4} - \frac{\int x^2 \, d\mu(x)}{\left( \int x \, d\mu(x) \right)^3} \right) u^2 + O(|u|^3)
\end{align*}
as $|u| \to 0$. The result follows.
\end{proof}

We conclude this section with a lemma on ratios of Cauchy determinants involving the $\psi$-function, for later use.

\begin{lemma} \label{thm:psi_cauchy}
Fix a compact subset $I \subset \R_{>0}$ and a positive integer $k$. Then there exists a neighborhood $U \subset \C$ of $[-1,0]$ such that for all $\mu \in \cM_I$ and $u_1,\ldots,u_k, v_1,\ldots,v_k \in U$, the bound
\begin{align} \label{eq:psi_cauchy_bound}
C^{-1} < \left| \frac{\det\left( \frac{1}{\psi_\mu^{-1}(u_i) - \psi_\mu^{-1}(v_j)} \right)_{1 \le i,j \le k}}{\det\left( \frac{1}{u_i - v_j} \right)_{1 \le i,j \le k}} \prod_{i=1}^k \frac{1}{\sqrt{\psi_\mu'(\psi_\mu^{-1}(u_i)) \psi_\mu'(\psi_\mu^{-1}(v_i))}} \right| < C
\end{align}
holds for some constant $C > 0$ independent of $\mu \in \cM_I$. Moreover,
\begin{align} \label{eq:psi_cauchy_estimate}
\frac{\det\left( \frac{1}{\psi_\mu^{-1}(u_i) - \psi_\mu^{-1}(v_j)} \right)_{1 \le i,j \le k}}{\det\left( \frac{1}{u_i - v_j} \right)_{1 \le i,j \le k}} \prod_{i=1}^k \frac{1}{\sqrt{\psi_\mu'(\psi_\mu^{-1}(u_i)) \psi_\mu'(\psi_\mu^{-1}(v_i))}} = 1 + O\left( \max_{1 \le i \le k} |u_i - v_i|^2 \right)
\end{align}
uniformly over $\mu \in \cM_I$, $u_1,\ldots,u_k,v_1,\ldots,v_k \in U$.
\end{lemma}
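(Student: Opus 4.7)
Let $\phi = \psi_\mu^{-1}$. Applying Cauchy's determinant identity
\[
\det\left(\frac{1}{x_i - y_j}\right)_{i,j=1}^k = \frac{\prod_{i<j}(x_i - x_j)(y_j - y_i)}{\prod_{i,j}(x_i - y_j)}
\]
to both determinants in \eqref{eq:psi_cauchy_bound}, setting $D_\mu(u,v) := (\phi(u) - \phi(v))/(u-v)$ (extended by $D_\mu(u,u) := \phi'(u)$), and using the chain-rule identity $\psi_\mu'(\phi(u))\phi'(u) = 1$ to rewrite the correction factor as $\prod_i \sqrt{\phi'(u_i)\phi'(v_i)}$, the expression whose bounds we seek becomes
\[
R(\vec u, \vec v;\mu) = \prod_i \frac{\sqrt{D_\mu(u_i,u_i) D_\mu(v_i,v_i)}}{D_\mu(u_i,v_i)} \prod_{i<j} \frac{D_\mu(u_i,u_j) D_\mu(v_i,v_j)}{D_\mu(u_i,v_j) D_\mu(u_j,v_i)}.
\]

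For \eqref{eq:psi_cauchy_bound}, I claim $R$ extends to a holomorphic, uniformly bounded, and nonvanishing function of $(\vec u, \vec v) \in U^{2k}$, with bounds independent of $\mu \in \cM_I$. Since $\phi$ has a simple pole at $-1$, write $\phi(u) = \tilde\phi_\mu(u)/(u+1)$ with $\tilde\phi_\mu$ holomorphic and nonzero at $-1$. A direct computation yields $D_\mu(u,v) = g_\mu(u,v)/[(u+1)(v+1)]$, where $g_\mu$ is holomorphic and symmetric on $U^2$, and nonvanishing by bijectivity of $\phi$ (\Cref{thm:psi}(i)). Substituting this into $R$, all $(u_i+1)$ and $(v_i+1)$ factors cancel---here one works with the branch of $\sqrt{\phi'}$ that is real-positive on $(-1,0]$, which extends holomorphically since $g_\mu(u,u)$ is holomorphic and nonzero on the simply-connected $U$---leaving $R$ as a rational expression in the $k^2$ values of $g_\mu$. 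Continuity of $\mu \mapsto g_\mu$ from \Cref{thm:psi}(iii), together with compactness of $\cM_I$, furnishes uniform two-sided bounds on $|g_\mu|$, hence on $|R|$.

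For \eqref{eq:psi_cauchy_estimate}, I exploit the symmetry $R(\vec u, \vec v;\mu) = R(\vec v, \vec u;\mu)$, which holds because the simultaneous swap $u_i \leftrightarrow v_i$ multiplies each determinant by $(-1)^k$ while leaving the correction factor invariant. In midpoint/difference coordinates $m_i = (u_i + v_i)/2$, $d_i = u_i - v_i$, the symmetry says $R$ is even in $\vec d$, so its Taylor expansion around $\vec d = 0$ has no odd-total-degree monomials. A direct computation using $D_\mu(u,v) = D_\mu(v,u)$ shows $R|_{\vec d = 0} = 1$. Thus $R - 1 = \sum_{|\alpha| \ge 2} c_\alpha(\vec m;\mu)\, \vec d^{\,\alpha}$, and since every degree-$2$ monomial $d_i d_j$ is dominated by $\max_i |d_i|^2$, Taylor's theorem together with continuity of the coefficients in $(\vec m, \mu)$ and compactness yields $|R - 1| \le C \max_i |u_i - v_i|^2$ uniformly in a neighborhood of the diagonal; away from the diagonal the same inequality (with a possibly larger $C$) follows from \eqref{eq:psi_cauchy_bound} and the fact that $\max_i|u_i-v_i|$ is then bounded below.

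The main technical care needed is the branch choice for $\sqrt{\phi'(u)\phi'(v)}$ and verifying that $g_\mu$ stays uniformly bounded away from zero on $U^2$ across the compact family $\cM_I$; both are handled by shrinking $U$ as necessary and appealing to the continuity assertion in \Cref{thm:psi}.
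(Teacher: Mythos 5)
Your proof is correct. The overall scaffolding is the same as the paper's: both reduce via the Cauchy determinant identity to the quantity you call $D_\mu(u,v)$ (the paper's $\mathrm{C}(u,v)$ is $\sqrt{D_\mu(u,u)D_\mu(v,v)}/D_\mu(u,v)$), and both obtain boundedness by observing that the apparent singularities of $\psi_\mu^{-1}$ at $u=-1$ cancel so that the relevant quantity is holomorphic, bounded, and nonvanishing on $\overline{U}^{2}$, uniformly over the compact family $\cM_I$ (the paper invokes Riemann's second extension theorem; your explicit factorization $D_\mu(u,v) = g_\mu(u,v)/[(u+1)(v+1)]$ makes the same cancellation visible by hand). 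Where you genuinely depart from the paper is in establishing \eqref{eq:psi_cauchy_estimate}. The paper proves the $k=1$ claim by a direct second-order Taylor computation that exhibits the cancellation of the linear term in $u-v$, and then needs a separate reflection trick (replacing $\mu$ by the pushforward $\nu$ under $x \mapsto x^{-1}$, for which $\psi_\nu^{-1}(w) = \psi_\mu^{-1}(-(w+1))^{-1}$) to transfer that estimate from a region bounded away from $-1$ to the full neighborhood $U$. You instead observe that $R(\vec u,\vec v;\mu) = R(\vec v,\vec u;\mu)$, which makes $R$ even in the difference variables $d_i = u_i - v_i$, forcing all odd-degree Taylor coefficients (in particular the linear term) to vanish identically; combined with $R|_{\vec d = 0}=1$ and Cauchy estimates for the uniform bound on the second-order remainder, this gives the $O(\max_i |u_i-v_i|^2)$ rate in one stroke and avoids the case split near $-1$ altogether. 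This is a cleaner and more conceptual argument. The only point worth tightening in your write-up is the sentence ``continuity of the coefficients in $(\vec m,\mu)$ and compactness yields $|R-1|\le C\max_i|u_i-v_i|^2$'': it is simpler and more airtight to say that $R$ is holomorphic and uniformly bounded in $(\vec m, \vec d)$ by the first part, so Cauchy's estimates on a slightly shrunken polydisc give uniform bounds on the degree-$\geq 2$ tail of the Taylor series, without appealing to continuity of individual coefficients.
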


\begin{remark}
From \Cref{thm:psi}, $\psi_\mu'(\psi_\mu^{-1}(u))$ is nonzero for $u$ in a neighborhood $U$ of $[-1,0]$ and positive on $[-1,0]$. Therefore, the square root is well-defined, where we take the standard branch for $u \in [-1,0]$ and extend by continuity on $U$.
\end{remark}

\begin{proof}[Proof of \Cref{thm:psi_cauchy}]
Our starting point is a proof of the case $k = 1$, restated in the following claim:

\begin{claim} \label{claim:psi_difference}
Fix a compact subset $I \subset \R_{>0}$. Then there exists a neighborhood $U \subset \C$ of $[-1,0]$ such that for all $\mu \in \cM_I$ and $u, v \in U$, we have
\begin{align} \label{eq:psi_diff_bounded_nonvanishing}
C^{-1} < \frac{1}{\psi_\mu^{-1}(u) - \psi_\mu^{-1}(v)} \frac{u - v}{\sqrt{\psi_\mu'(\psi_\mu^{-1}(u)) \psi_\mu'(\psi_\mu^{-1}(v))}} < C
\end{align}
for some constant $C$ independent of $\mu \in \cM_I$. Moreover,
\begin{align} \label{eq:psi_diff_close_points}
\frac{1}{\psi_\mu^{-1}(u) - \psi_\mu^{-1}(v)} \frac{u - v}{\sqrt{\psi_\mu'(\psi_\mu^{-1}(u)) \psi_\mu'(\psi_\mu^{-1}(v))}} = 1 + O(|u-v|^2) \end{align}
uniformly over $\mu \in \cM_I$ and $u,v \in U$.
\end{claim}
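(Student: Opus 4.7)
The plan is to rewrite the target expression as a single holomorphic function of $(u,v)$ that equals $1$ on the diagonal, and then extract both bounds from compactness of $\cM_I$ together with a symmetry-based order-of-vanishing argument.

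Setting $f := \psi_\mu^{-1}$ and using $\psi_\mu'(f(u)) = 1/f'(u)$ from the inverse function theorem, the expression to control becomes
\[ G_\mu(u,v) := \frac{u - v}{f(u) - f(v)} \sqrt{f'(u) f'(v)}. \]
By \Cref{thm:psi}, I may fix a neighborhood $U$ of $[-1,0]$ on which, uniformly for $\mu \in \cM_I$, the function $f$ is univalent with nonvanishing derivative; the divided difference $(f(u)-f(v))/(u-v)$ then extends to a nonvanishing holomorphic function on $U \times U$ with value $f'(u)$ on the diagonal, and $G_\mu$ extends to a holomorphic function on $U \times U$ with $G_\mu(u,u) = 1$.

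For the boundedness \eqref{eq:psi_diff_bounded_nonvanishing}, \Cref{thm:psi}(iii) says that $\mu \mapsto f$, and hence $\mu \mapsto G_\mu$, is continuous from the compact space $\cM_I$ into the space of holomorphic functions on any compact $K \subset U$ (uniform topology). Continuity of $(\mu,u,v) \mapsto G_\mu(u,v)$, nonvanishing of $G_\mu$ on a neighborhood of the diagonal, and compactness of $\cM_I$ then let me shrink $U$ slightly to a closed neighborhood $U'$ of $[-1,0]$ on which $|G_\mu(u,v)|$ is uniformly bounded above and below away from zero, which is the desired bound.

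For the sharper estimate \eqref{eq:psi_diff_close_points}, the essential observation is that $G_\mu$ is \emph{symmetric} in $(u,v)$ (direct from the definition) and equals $1$ identically on the diagonal. Differentiating the symmetry relation $G_\mu(u,v) = G_\mu(v,u)$ and setting $u = v$ gives $\partial_1 G_\mu(u,u) = \partial_2 G_\mu(u,u)$, while differentiating $u \mapsto G_\mu(u,u) \equiv 1$ gives $\partial_1 G_\mu(u,u) + \partial_2 G_\mu(u,u) = 0$; so both partials of $G_\mu$ vanish on the diagonal. Hence $G_\mu(u,v) - 1$ vanishes to second order there, and together with uniform bounds on the second partials of $G_\mu$ (again from compactness of $\cM_I$ and Cauchy estimates) this yields $G_\mu(u,v) = 1 + O(|u-v|^2)$ uniformly over $\mu \in \cM_I$ and $u,v \in U'$. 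The only delicate point is making the neighborhood $U$ work uniformly for all $\mu$, which is exactly what \Cref{thm:psi} provides; I do not anticipate any substantive obstacle beyond this.
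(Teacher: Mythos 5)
Your reformulation of the target quantity as $G_\mu(u,v) = \frac{u-v}{f(u)-f(v)}\sqrt{f'(u)f'(v)}$ with $f = \psi_\mu^{-1}$ is correct, and your symmetry argument for the second-order estimate is elegant. However, there is a genuine gap in your justification that $G_\mu$ is holomorphic on $U \times U$. You assert that $f$ is ``univalent with nonvanishing derivative'' on a neighborhood of $[-1,0]$, and hence that the divided difference $(f(u)-f(v))/(u-v)$ extends to a nonvanishing \emph{holomorphic} function on $U\times U$. This is false: by \Cref{thm:psi}(i), $\psi_\mu^{-1}$ is meromorphic with a simple pole at $u=-1$, so $f$ is not holomorphic on any neighborhood of $[-1,0]$, and the divided difference itself has poles along $\{u=-1\}\cup\{v=-1\}$. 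Your final conclusion --- that $G_\mu$ nonetheless extends holomorphically across these sets --- is true, because the pole of $f'$ (order $2$) and the zero of $(u-v)/(f(u)-f(v))$ cancel, but this is exactly the point that requires an argument. The paper establishes it by observing that both $G_\mu$ and $1/G_\mu$ have no poles of codimension one and invoking Riemann's second extension theorem; you would need something of this sort (or the explicit Laurent computation near $(-1,-1)$) before the rest of your argument can proceed. Your closing remark that the ``only delicate point is making the neighborhood $U$ work uniformly'' indicates the pole at $-1$ was overlooked.

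Once holomorphicity of $G_\mu$ on $U\times U$ is in hand (e.g.\ via the Riemann extension argument the paper uses for the boundedness bound), your route to \eqref{eq:psi_diff_close_points} is genuinely different from --- and cleaner than --- the paper's. The paper performs a direct Taylor expansion of $\psi_\mu^{-1}$ in powers of $u-v$, which is only valid away from the pole at $-1$; it then covers the region near $-1$ by a separate reduction, replacing $\mu$ by the reciprocal measure $\nu$ satisfying $\nu([c_1,c_2]) = \mu([c_2^{-1},c_1^{-1}])$ and using the identity $\psi_\nu^{-1}(w) = \psi_\mu^{-1}(-(w+1))^{-1}$. Your argument --- that $G_\mu(u,v)$ is holomorphic, symmetric in $(u,v)$, and identically $1$ on the diagonal, hence both first partials vanish there and $G_\mu - 1 = O(|u-v|^2)$ by Cauchy estimates and compactness of $\cM_I$ --- avoids the case split entirely and is more conceptual. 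It would be a nice improvement if you patched the holomorphicity step; as written, though, the proof is incomplete at that step.
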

\begin{proof}[Proof of \Cref{claim:psi_difference}]
Choose $U \supset [-1,0]$ so that $\psi_\mu^{-1}$ is analytic on its closure for every $\mu \in \cM_I$, where existence is guaranteed by \Cref{thm:psi}. Observe that
\[ \mathrm{C}(u,v) := \frac{1}{\psi_\mu^{-1}(u) - \psi_\mu^{-1}(v)} \frac{u - v}{\sqrt{\psi_\mu'(\psi_\mu^{-1}(u)) \psi_\mu'(\psi_\mu^{-1}(v))}} \]
and its reciprocal have no poles of codimension $1$ and are thus holomorphic on $\cl(U)^2$ by Riemann's second extension theorem \cite[Theorem 7.1.2]{GR84}, as in \cite[Proof of Lemma 3.5]{GS}. Therefore $\mathrm{C}(u,v)$ is bounded and does not vanish on $U$. This implies \eqref{eq:psi_diff_bounded_nonvanishing} where the uniformity of $C$ follows from the compactness of $\cM_I$ and $\cl(U)$, and the continuity of $\mathrm{C}(u,v)$ as a function of $\mu$, $u$, and $v$.

It remains to show \eqref{eq:psi_diff_close_points}. Assume without loss of generality that $I = [a^{-1},a]$ for some $a > 1$. Fix $\delta > 0$ small and let $W_\delta := \{ w \in U: |w+1| \ge \delta\}$.

We start by showing \eqref{eq:psi_diff_close_points} for $u,v \in W_\delta$. Assuming $u,v \in W_\delta$, since
\[ \psi_\mu^{-1}(u) - \psi_\mu^{-1}(v) = \frac{1}{\psi_\mu'(\psi_\mu^{-1}(v))} (u-v) - \frac{1}{2} \frac{\psi_\mu''(\psi_\mu^{-1}(v))}{\psi_\mu'(\psi_\mu^{-1}(v))^3} (u-v)^2 + O(|u-v|^3) \]
we have
\begin{align*}
& \frac{1}{\psi_\mu^{-1}(u) - \psi_\mu^{-1}(v)} \frac{u - v}{\sqrt{\psi_\mu'(\psi_\mu^{-1}(u)) \psi_\mu'(\psi_\mu^{-1}(v))}} \\
& \quad \quad = \frac{\sqrt{\psi_\mu'(\psi_\mu^{-1}(v))}}{\sqrt{\psi_\mu'(\psi_\mu^{-1}(u))}} \frac{1}{1 - \frac{1}{2} \frac{\psi_\mu''(\psi_\mu^{-1}(v))}{\psi_\mu'(\psi_\mu^{-1}(v))^2} (u-v) + O(|u-v|^2)} \\
& \quad \quad = \frac{\sqrt{\psi_\mu'(\psi_\mu^{-1}(v))}}{\sqrt{\psi_\mu'(\psi_\mu^{-1}(u))}} \left( 1 + \frac{1}{2} \frac{\psi_\mu''(\psi_\mu^{-1}(v))}{\psi_\mu'(\psi_\mu^{-1}(v))^2} (u-v) + O(|u-v|^2) \right).
\end{align*}
Since
\[ \log \psi_\mu'(\psi_\mu^{-1}(u)) = \log \psi_\mu'(\psi_\mu^{-1}(v)) + \frac{\psi_\mu''(\psi_\mu^{-1}(v))}{\psi_\mu'(\psi_\mu^{-1}(v))^2}(u - v) + O(|u-v|^2), \]
we have
\begin{align*}
\frac{\sqrt{\psi_\mu'(\psi_\mu^{-1}(v))}}{\sqrt{\psi_\mu'(\psi_\mu^{-1}(u))}} &= \exp\left( \frac{1}{2} \log \psi_\mu'(\psi_\mu^{-1}(v)) - \frac{1}{2} \log \psi_\mu'(\psi_\mu^{-1}(u)) \right) \\
&= \exp\left( -\frac{1}{2} \frac{\psi_\mu''(\psi_\mu^{-1}(v))}{\psi_\mu'(\psi_\mu^{-1}(v))^2}(u-v) + O(|u-v|^2) \right) \\
&= 1 - \frac{1}{2} \frac{\psi_\mu''(\psi_\mu^{-1}(v))}{\psi_\mu'(\psi_\mu^{-1}(v))^2}(u-v) + O(|u-v|^2).
\end{align*}
Combining these estimates proves \eqref{eq:psi_diff_close_points} holds for $u,v \in W_\delta$.

To complete the proof, we show that \eqref{eq:psi_diff_close_points} hold for $u,v \in \{w\in U: |w| \ge \delta\} = -(W_\delta + 1)$. For $\delta$ sufficiently small, $W_\delta$ and $-(W_\delta + 1)$ cover $U$. This is sufficient since the estimate \eqref{eq:psi_diff_close_points} holds trivially if $u,v$ are separated. We prove this by reduction to the case for $W_\delta$. We may write
\[ \frac{1}{\psi_\mu^{-1}(u) - \psi_\mu^{-1}(v)} \frac{u - v}{\sqrt{\psi_\mu'(\psi_\mu^{-1}(u)) \psi_\mu'(\psi_\mu^{-1}(v))}} = \frac{1}{\psi_\nu^{-1}(\wt{u}) - \psi_\nu^{-1}(\wt{v})} \frac{\wt{u} - \wt{v}}{\sqrt{\psi_\nu'(\psi_\nu^{-1}(\wt{u})) \psi_\nu'(\psi_\nu^{-1}(\wt{v}))}} \]
where $\wt{u} = -(u+1)$, $\wt{v} = -(v+1)$, and let $\nu$ denote the Borel probability measure on $\R_{>0}$ determined by $\nu([c_1,c_2]) = \mu([c_2^{-1},c_1^{-1}])$ for any $0 < c_1 \le c_2 < \infty$. Indeed, observe
\begin{align*}
\psi_\nu(z) &= -\psi_\mu(z^{-1}) - 1 \\
\psi_\nu^{-1}(w) &= \psi_\mu^{-1}(-(w+1))^{-1} \\
\psi_\nu'(z) &= \frac{1}{z^2} \psi_\mu'(z^{-1}) \\
\psi_\nu'(\psi_\nu^{-1}(w)) &= \psi_\mu^{-1}(-(w+1))^2 \psi_\mu'(\psi_\mu^{-1}(-(w+1))).
\end{align*}
Since $\nu \in \cM_I$ (recall $I = [a^{-1},a]$), this completes the proof.
\end{proof}

By the Cauchy determinant formula, which states
\[ \det\left( \frac{1}{a_i - b_j} \right)_{1 \le i,j \le k} = \frac{\prod_{1 \le i < j \le k} (a_i - a_j)(b_j - b_i)}{\prod_{i,j=1}^k (a_i - b_j)}, \]
we have
\begin{align*}
& \frac{\det\left( \frac{1}{\psi_\mu^{-1}(u_i) - \psi_\mu^{-1}(v_j)} \right)_{1 \le i,j \le k}}{\det\left( \frac{1}{u_i - v_j} \right)_{1 \le i,j \le k}} \prod_{i=1}^k \frac{1}{\sqrt{\psi_\mu'(\psi_\mu^{-1}(u_i)) \psi_\mu'(\psi_\mu^{-1}(v_i))}} \\
& \quad \quad = \prod_{i=1}^k \frac{u_i - v_i}{\psi_\mu^{-1}(u_i) - \psi_\mu^{-1}(v_i)}  \frac{1}{\sqrt{\psi_\mu'(\psi_\mu^{-1}(u_i)) \psi_\mu'(\psi_\mu^{-1}(v_i))}} \\
& \quad \quad \quad \quad \times\prod_{1 \le i < j \le k} \frac{u_i - v_j}{\psi_\mu^{-1}(u_i) - \psi_\mu^{-1}(v_j)} \frac{v_i - u_j}{\psi_\mu^{-1}(v_i) - \psi_\mu^{-1}(u_j)} \frac{\psi_\mu^{-1}(u_i) - \psi_\mu^{-1}(u_j)}{u_i - u_j} \frac{\psi_\mu^{-1}(v_i) - \psi_\mu^{-1}(v_j)}{v_i - v_j}.
\end{align*}
Setting
\[ \mathrm{C}(u,v) := \frac{u - v}{\psi_\mu^{-1}(u) - \psi_\mu^{-1}(v)} \frac{1}{\sqrt{\psi_\mu'(\psi_\mu^{-1}(u)) \psi_\mu'(\psi_\mu^{-1}(v))}}, \]
we obtain
\[ \frac{\det\left( \frac{1}{\psi_\mu^{-1}(u_i) - \psi_\mu^{-1}(v_j)} \right)_{1 \le i,j \le k}}{\det\left( \frac{1}{u_i - v_j} \right)_{1 \le i,j \le k}} \prod_{i=1}^k \frac{1}{\sqrt{\psi_\mu'(\psi_\mu^{-1}(u_i)) \psi_\mu'(\psi_\mu^{-1}(v_i))}} = \prod_{i=1}^k \mathrm{C}(u_i,v_i) \prod_{1 \le i < j \le k} \frac{\mathrm{C}(u_i,v_j) \mathrm{C}(v_i,u_j)}{\mathrm{C}(u_i,u_j) \mathrm{C}(v_i,v_j)}. \]
Then \Cref{claim:psi_difference} implies the bound \eqref{eq:psi_cauchy_bound}.

For the estimate \eqref{eq:psi_cauchy_estimate}, first note that
\[ \mathrm{C}(u_i,v_i) = 1 + O(|u_i - v_i|^2) \]
by \Cref{claim:psi_difference}, and
\[ \frac{\mathrm{C}(u_i,v_j) \mathrm{C}(v_i,u_j)}{\mathrm{C}(u_i,u_j) \mathrm{C}(v_i,v_j)} = 1 + O\left(\max(|u_i - v_i|^2,|u_j - v_j|^2)\right), \]
which can be seen by Taylor expanding in $u_i$ near $v_i$ and $u_j$ near $v_j$.
\end{proof}

\section{Asymptotics of Multivariate Bessel Functions} \label{sec:asymptotics_bessel}

Given $v_1,\ldots,v_k \in \{N-1,N-2,\ldots,0\}$, define
\[ B_\mu^{(N)}(u_1,\ldots,u_k; v_1,\ldots,v_k) := \frac{\cB_{\vec{a}}(u_1,\ldots,u_k,N-1,\ldots,\wh{v_1},\ldots,\wh{v_k},\ldots,0)}{\cB_{\vec{a}}(N-1,\ldots,0)} \]
where $\mu := \frac{1}{N} \sum_{i=1}^N \delta_{e^{a_i}}$ and the hat notation means that $v_1,\ldots,v_k$ are omitted from $N-1,N-2,\ldots,0$. In other words, the multivariate Bessel function in the numerator takes as input $\rho_N$ with $v_1,\ldots,v_k$ replaced by $u_1,\ldots,u_k$. In this section, we obtain asymptotics for these normalized multivariate Bessel functions in preparation for proving \Cref{thm:main}. We note that the asymptotics from this section are refinements of those from \cite[Theorem 3.4]{GS}. Moreover, we obtain our asymptotics by bootstrapping off the latter. 

\begin{definition}
Define
\[ H_\mu(u) := - (u+1)\log S_\mu(u) - \int \log \left( (u+1) S_\mu(u)^{-1} -u x \right) d\mu(x) \]
where the logarithms are given by the standard branch.
\end{definition}

Observe that
\[ H_\mu(u) = -(u+1)\log(u+1) + u \log u - u \log \psi_\mu^{-1}(u) - \int \log (1 - x \psi_\mu^{-1}(u)) d\mu(x). \]
Using the fact that
\[ -u - \int \frac{x \psi_\mu^{-1}(u)}{1 - x \psi_\mu^{-1}(u)} d\mu(x) = u - \psi_\mu(\psi_\mu^{-1}(u)) = 0, \]
we have
\begin{align} \label{eq:H'}
H_\mu'(u) = -\log(u+1) + \log u - \log \psi_\mu^{-1}(u) = - \log S_\mu(u)
\end{align}
and
\begin{align} \label{eq:H''}
H_\mu''(u) = -\frac{S_\mu'(u)}{S_\mu(u)}.
\end{align}

\begin{definition}
Let $\cR^N$ denote the subset of $\cM_I$ consisting of probability measures of the form
\[ \frac{1}{N} \sum_{i=1}^N \delta_{x_i} \]
where $x_1,\ldots,x_N \in I$.
\end{definition}

\begin{theorem} \label{thm:bessel_asymptotics}
Fix a closed interval $I \subset \R_{>0}$. There exists an open neighborhood $U$ of $[-1,0]$ such that
\begin{align*}
& B_\mu(N(u_1+1),\ldots,N(u_k+1);N(v_1+1),\ldots,N(v_k+1)) \\
& = \frac{\det\left( \frac{1}{\psi_\mu^{-1}(u_i) - \psi_\mu^{-1}(v_j)}\right)_{1 \le i,j \le k}}{\det\left( \frac{1}{u_i - v_j}\right)_{1 \le i,j \le k}} \prod_{i=1}^k \left[ \frac{1}{\sqrt{\psi_\mu'(\psi_\mu^{-1}(u_i)) \psi_\mu'(\psi_\mu^{-1}(v_i))}} \frac{\sqrt{S_\mu(v_i)}e^{N H_\mu(u_i)}}{\sqrt{S_\mu(u_i)} e^{N H_\mu(v_i)}} (1 + o(|u_i - v_i|)) \right]
\end{align*}
as $N\to\infty$, uniformly over $\mu \in \cM_I \cap \cR^N$, $u_1,\ldots,u_k \in U$, and $v_1,\ldots,v_k \in \tfrac{1}{N}\Z \cap [-1,0]$.
\end{theorem}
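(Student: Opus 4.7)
The plan is to bootstrap off of \cite[Theorem 3.4]{GS}, which already supplies the leading-order asymptotic for this normalized multivariate Bessel function with a multiplicative $1 + o(1)$ error. What remains is to (i) rewrite the GS prefactor in the symmetric form stated above, and (ii) refine the error from $o(1)$ to the claimed $o(|u_i - v_i|)$ inside the product.

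For (i), the GS asymptotic naturally expresses $B_\mu^{(N)}$ in terms of the Cauchy-type determinant $\det[(\psi_\mu^{-1}(u_i) - \psi_\mu^{-1}(v_j))^{-1}]$ together with the exponential prefactors $e^{N H_\mu(u_i)}/e^{N H_\mu(v_i)}$ and contributions from the frozen coordinates of $\rho_N$. Dividing and multiplying by the standard Cauchy determinant $\det[(u_i - v_j)^{-1}]$ transforms the prefactor into exactly the quotient analyzed in \Cref{thm:psi_cauchy}, which then accounts for the $\prod_i (\psi_\mu'(\psi_\mu^{-1}(u_i)) \psi_\mu'(\psi_\mu^{-1}(v_i)))^{-1/2}$ factors up to an additional $1 + O(\max_i |u_i - v_i|^2)$ correction absorbed into the error. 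After this rearrangement the formula matches the target right-hand side, modulo the multiplicative error.

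For (ii), the key observation is that the $v_i \in \tfrac{1}{N}\Z \cap [-1, 0]$ are lattice values, and $B_\mu^{(N)}(N(\vec v + 1); N(\vec v + 1)) = 1$ exactly by the normalization; direct inspection, using the limiting case of \Cref{thm:psi_cauchy} along the diagonal $\vec u = \vec v$, shows that the target RHS without the error factor also equals $1$ there. Define
\[
R_N(\vec u, \vec v) := B_\mu^{(N)}\bigl(N(\vec u + 1); N(\vec v + 1)\bigr) \,\Big/\, \bigl(\text{target RHS, omitting the } (1 + o(|u_i - v_i|)) \text{ factors}\bigr),
\]
which is holomorphic in each $u_i$ on a complex polydisk neighborhood of $[-1, 0]^k$, equals $1$ identically when $\vec u = \vec v$, and satisfies $\|R_N - 1\|_\infty = o_N(1)$ on compacts by step (i). A one-variable Cauchy integral estimate in each $u_i$ around its lattice counterpart $v_i$, combined with the exact vanishing at $\vec u = \vec v$, then upgrades the uniform bound to $|R_N - 1| \le C \cdot o_N(1) \cdot \max_i |u_i - v_i|$ on a slightly smaller polydisk --- this is the standard Schwarz-type sharpening, using that a holomorphic function bounded by $\epsilon$ on a disk of radius $r$ and vanishing at the center is bounded by $C\epsilon |z - \text{center}|/r$ inside. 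Distributing this correction multiplicatively across the $k$ factors of the product yields the per-factor $(1 + o(|u_i - v_i|))$ error as stated.

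The main obstacle will be the analytic input required by (ii): one needs the GS asymptotic from step (i) to hold uniformly on a full \emph{complex} polydisk around $[-1, 0]^k$ rather than only on the real segment, with uniformity also in $\mu \in \cM_I \cap \cR^N$. This requires revisiting the steepest-descent contour employed in \cite{GS} to verify that the saddle point depends analytically on $\vec u$ throughout such a neighborhood, and that the decay estimates along the deformed contour are stable under small complex perturbations of $u_i$ near the lattice point $v_i$. Once this uniform complex-analytic version of the GS estimate is secured, the Schwarz-type sharpening and the final reorganization through \Cref{thm:psi_cauchy} are routine.
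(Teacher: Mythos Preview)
Your proposal is correct and follows essentially the same route as the paper: invoke \cite[Theorem 3.4]{GS} for the asymptotic with a multiplicative $1+o(1)$ error, observe that the ratio of $B_\mu^{(N)}$ to the target prefactor equals $1$ exactly at $\vec u = \vec v$, and then use analyticity in the $u_i$ over a complex neighborhood together with a Cauchy/Schwarz-type estimate to upgrade $o(1)$ to $o(\max_i|u_i-v_i|)$.

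Two small divergences are worth flagging. First, your step (i) is superfluous: the paper cites \cite[Theorem 3.4]{GS} as already producing the asymptotic in exactly the displayed form (Cauchy determinant ratio, $\psi'$ square roots, $\sqrt{S_\mu}$ and $e^{NH_\mu}$ factors) with $o(1)$ error, so no rearrangement via \Cref{thm:psi_cauchy} is needed at this stage. Second, for the error upgrade the paper telescopes in $k$: it sets $F_i^{(N)}$ and $\fF_i^{(N)}$ to be the logarithms of the ratio of the $k=i$ to $k=i-1$ normalized Bessel function (respectively, target prefactor), observes that each vanishes on the entire \emph{hyperplane} $\{u_i=v_i\}$, divides by $(u_i-v_i)$ to obtain a function analytic in $u_1,\ldots,u_i$, bounds it by $o(1)$ on the boundary $\partial U_{2\varepsilon/3}$ (where $|u_i-v_i|$ is bounded below and the GS estimate applies), and pushes this inward by the Cauchy integral formula. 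Your single-point vanishing plus gradient/Cauchy estimate reaches the same conclusion; the telescoping just makes the per-coordinate form of the error explicit. Your closing concern about needing the GS asymptotic uniformly on a complex neighborhood with uniformity in $\mu\in\cM_I\cap\cR^N$ is exactly the analytic input the paper uses, and the paper simply asserts that the proof in \cite{GS} already delivers this.
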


\begin{remark}
To translate between our notation and that of \cite{GS}, our $H_\mu$ corresponds to their $\wt{\Psi}_{\rho_N}$ and our $\psi_\mu$ corresponds to their $M_{\wt{\rho}_N}$.
\end{remark}

\begin{remark}
We note the peculiarity in \Cref{thm:bessel_asymptotics} that the uniformity $\mu \in \cM_I \cap \cR^N$ is over a set varying with $N$.
\end{remark}

\begin{proof}[Proof of \Cref{thm:bessel_asymptotics}]
Our starting point is \cite[Theorem 3.4]{GS} which states that there is some neighborhood $U$ of $[-1,0]$ such that
\begin{align*}
& B_\mu^{(N)}(N(u_1+1),\ldots,N(u_k+1);N(v_1+1),\ldots,N(v_k+1)) \\
& = \frac{\det\left( \frac{1}{\psi_\mu^{-1}(u_i) - \psi_\mu^{-1}(v_j)}\right)_{1 \le i,j \le k}}{\det\left( \frac{1}{u_i - v_j}\right)_{1 \le i,j \le k}} \prod_{i=1}^k \left[ \frac{1}{\sqrt{\psi_\mu'(\psi_\mu^{-1}(u_i)) \psi_\mu'(\psi_\mu^{-1}(v_i))}} \frac{\sqrt{S_\mu(v_i)}e^{N H_\mu(u_i)}}{\sqrt{S_\mu(u_i)} e^{N H_\mu(v_i)}} (1 + o(1)) \right]
\end{align*}
as $N\to\infty$, uniformly for $u_1,\ldots,u_k,v_1,\ldots,v_k \in U$ and $\mu \in \cM_I \cap \cR^N$. We note that the original statement of \cite[Theorem 3.4]{GS} is in the regime where $\mu = \mu_N$ converges weakly to a measure in $\cM_I$ as $N\to\infty$, but the proof also implies uniform asymptotics for $\mu \in \cM_I \cap \cR^N$. Thus, it remains to improve the relative $o(1)$ error.

Define
\begin{align*}
\begin{split}
& \fB_\mu^{(N)}(u_1,\ldots,u_k;v_1,\ldots,v_k) \\
& \quad \quad = \frac{\det\left( \frac{1}{\psi_\mu^{-1}(u_i) - \psi_\mu^{-1}(v_j)} \right)_{1 \le i,j \le k}}{\det\left( \frac{1}{u_i - v_j}\right)_{1 \le i,j \le k}} \prod_{i=1}^k \left[ \frac{1}{\sqrt{\psi_\mu'(\psi_\mu^{-1}(u_i)) \psi_\mu'(\psi_\mu^{-1}(v_i))}} \frac{\sqrt{S_\mu(v_i)}e^{N H_\mu(u_i)}}{\sqrt{S_\mu(u_i)} e^{N H_\mu(v_i)}} \right]
\end{split}
\end{align*}
for $u_1,\ldots,u_k,v_1,\ldots,v_k \in U$. Then for $\e > 0$ sufficiently small,
\[ B_\mu^{(N)}(N(u_1+1),\ldots,N(u_k+1);N(v_1+1),\ldots,N(v_k+1)) = \fB_\mu^{(N)}(u_1,\ldots,u_k;v_1,\ldots,v_k) (1 + o(1)) \]
as $N\to\infty$, uniformly over $u_1,\ldots,u_k \in U_\e$, and $v_1,\ldots,v_k \in \tfrac{1}{N}\Z \cap [-1,0]$, where $U_\e$ denotes the $\e$-neighborhood of $[-1,0]$.

By \Cref{thm:psi_cauchy}, the quotient of Cauchy determinants in the definition of $\fB_\mu^{(N)}$ is bounded and bounded away from $0$ for $u_1,\ldots,u_k,v_1,\ldots,v_k \in U_\e$, for $\e > 0$ sufficiently small. Similarly, since (see \Cref{thm:psi})
\[ \psi_\mu'(\psi_\mu^{-1}(0)) = \psi_\mu'(0)\ne 0 \quad \quad \mbox{and} \quad \quad S_\mu(0) \ne 0, \]
we have
\[ \frac{1}{\sqrt{\psi_\mu'(\psi_\mu^{-1}(u))\psi_\mu'(\psi_\mu^{-1}(v))}} \frac{\sqrt{S_\mu(v)}}{\sqrt{S_\mu(u)}} \]
is bounded and bounded away from $0$ for $u_1,\ldots,u_k,v_1,\ldots,v_k \in U_\e$, given that $\e$ is sufficiently small. For each integer $k \ge 1$, define
\begin{align*}
F_k^{(N)}(u_1,\ldots,u_k;v_1,\ldots,v_k) &:= \log\left( \frac{B_\mu^{(N)}(N(u_1+1),\ldots,N(u_k+1);N(v_1+1),\ldots,N(v_k+1))}{B_\mu^{(N)}(N(u_1+1),\ldots,N(u_{k-1}+1);N(v_1+1),\ldots,N(v_{k-1}+1))} \right) \\
\fF_k^{(N)}(u_1,\ldots,u_k;v_1,\ldots,v_k) &:= \log\left( \frac{\fB_\mu^{(N)}(Nu_1,\ldots,Nu_k;Nv_1,\ldots,Nv_k)}{\fB_\mu^{(N)}(Nu_1,\ldots,Nu_{k-1};Nv_1,\ldots,Nv_{k-1})} \right)
\end{align*}
where in the case $k = 1$, we take the denominator in the logarithm to be $1$. Then $F_k^{(N)}$ and $\fF_k^{(N)}$ are analytic for $u_1,\ldots,u_k \in U_\e$, where $v_1,\ldots,v_k \in \tfrac{1}{N}\Z \cap [-1,0]$ and $N$ is sufficiently large. Furthermore, $F_k^{(N)}$ and $\fF_k^{(N)}$ vanish whenever $u_k = v_k$.

Then
\[ \frac{1}{u_i - v_i} F_i^{(N)}(u_1,\ldots,u_i;v_1,\ldots,v_i) \quad \quad \mbox{and} \quad \quad \frac{1}{u_i - v_i} \fF_i^{(N)}(u_1,\ldots,u_i;v_1,\ldots,v_i) \]
are analytic for $u_1,\ldots,u_i \in U_\e$, where $v_1,\ldots,v_i \in \tfrac{1}{N} \Z \cap [-1,0]$ and $N$ is sufficiently large. Restricting to $|u_1| = \cdots = |u_i| = 2\e/3$, we have
\[ \frac{1}{u_i - v_i} F_i^{(N)}(u_1,\ldots,u_i;v_1,\ldots,v_i) - \frac{1}{u_i - v_i} \fF_i^{(N)}(u_1,\ldots,u_i;v_1,\ldots,v_i) = o(1). \]
uniformly for $u_1,\ldots,u_i \in \partial U_{2\e/3}$ and $v_1,\ldots,v_i \in \tfrac{1}{N} \Z \cap [-1,0]$. By Cauchy integral formula, the estimate above is valid for $u_1,\ldots,u_i \in U_{\e/2}$ and $v_1,\ldots,v_i \in \tfrac{1}{N} \Z \cap [-1,0]$. Therefore,
\begin{align*}
& B_\mu^{(N)}(N(u_1+1),\ldots,N(u_k+1);N(v_1+1),\ldots,N(v_k+1)) \\
&\quad = \exp\left( \sum_{i=1}^k F_i^{(N)}(u_1,\ldots,u_i;v_1,\ldots,v_i) \right) \\
&\quad = \exp\left( \sum_{i=1}^k \Big( \fF_i^{(N)}(u_1,\ldots,u_i;v_1,\ldots,v_i) + o(|u_i - v_i|) \Big) \right) \\
&\quad = \fB_\mu^{(N)}(u_1,\ldots,u_k;v_1,\ldots,v_k) (1 + o(\max_i |u_i - v_i|))
\end{align*}
as $N\to\infty$, uniformly for $u_1,\ldots,u_k \in U_{\e/2}$ and $v_1,\ldots,v_k \in \tfrac{1}{N} \Z \cap [-1,0]$. This completes the proof of \Cref{thm:bessel_asymptotics}.
\end{proof}

\section{Proof of Theorem \ref{thm:main}} \label{sec:main_proof}

In this section, we prove our main result \Cref{thm:main}. Throughout this section, we fix some notation. Given a sequence $X^{(N)}(1),X^{(N)}(2),\ldots$, denote by $\mu_N^{(m)}$ the empirical distribution of the squared singular values of $X^{(N)}(m)$. Given a compactly supported probability measure $\mu$, let $\kappa_1(\mu)$ and $\kappa_2(\mu)$ denote the mean (first cumulant) and variance (second cumulant) of $\mu$ respectively.

The key step is to establish the following intermediate result.

\begin{theorem} \label{thm:convergence_of_moments}
Suppose that the hypotheses of \Cref{thm:main} (i.e. conditions \eqref{eq:interval_containment} and \eqref{eq:variance_curve}) hold such that $X^{(N)}(1),X^{(N)}(2),\ldots$ have deterministic squared singular values, all contained in a fixed compact interval $I \subset \R_{>0}$. Let
\[ y_1^{(N)}(M) \ge \cdots \ge y_N^{(N)}(M) \]
denote the squared singular values of $X^{(N)}(M) \cdots X^{(N)}(1)$. Then for any $t_1 \ge \cdots \ge t_k > 0$ and $c_1,\ldots,c_k > 0$ such that $c_1 + \cdots + c_k \in (0,1)$, we have
\[ \E\left[ \prod_{i=1}^k \sum_{j=1}^N e^{c_i \log y_j^{(N)}(\lfloor t_i N \rfloor)} \right] = \left( \prod_{i=1}^k e^{c_i \cE_N(\lfloor t_i N \rfloor)} \right) \E\left[ \prod_{i=1}^k \sum_{j=1}^N e^{c_i \left( \xi_j^{(N)}(\frac{1}{4}\cV_N(\lfloor t_i N \rfloor)) - \frac{N}{2} \cV_N(\lfloor t_i N \rfloor)\right)} \right] (1 + o(1)) \]
as $N\to\infty$, where
\[ \cE_N(M):= \sum_{m=1}^M \log \kappa_1(\mu_N^{(m)}), \quad \quad \mbox{and} \quad \quad \cV_N(M) := \frac{1}{N} \sum_{m=1}^M \frac{\kappa_2(\mu_N^{(m)})}{\kappa_1(\mu_N^{(m)})^2}. \]
This convergence holds uniformly over sequences $X_1^{(N)},X_2^{(N)},\ldots$ satisfying \eqref{eq:interval_containment} and \eqref{eq:variance_curve} such that the squared singular values of $X^{(N)}(m)$ lie in $I$ for every $1 \le i \le M_1$.
\end{theorem}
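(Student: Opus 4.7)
The plan is to apply Proposition 2.4 to write the left-hand side as a nested application of the operators $\cD_{c_\ell}$ evaluated at $\vec{z} = \rho_N$, and then compare term-by-term with the analogous Brownian expression from Corollary 2.7. Expanding each $\cD_{c_\ell}$ as a sum over indices $i_\ell \in \{1, \ldots, N\}$ yields both a ``Cauchy factor'' $\prod_{j \ne i_\ell}(c_\ell + z_{i_\ell} - z_j)/(z_{i_\ell} - z_j)$ common to the two sides, and a product $\prod_m \varphi_m(\rho_N + \text{shifts})$ which, since $\varphi_m(\rho_N) = 1$, is exactly the product of normalized multivariate Bessel functions $B_{\mu_N^{(m)}}^{(N)}$ to which Theorem 4.1 applies, with $v_\ell = -i_\ell/N$ and $u_\ell = v_\ell + c_\ell/N$.

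The core step is a second-order Taylor expansion of the exponential factor $e^{N(H_\mu(u) - H_\mu(v))}\sqrt{S_\mu(v)/S_\mu(u)}$ in $u - v = c/N$, using the identities $H_\mu' = -\log S_\mu$ and $H_\mu'' = -S_\mu'/S_\mu$ together with Lemma 3.7 ($S_\mu(0) = 1/\kappa_1(\mu)$, $S_\mu'(0) = -\kappa_2(\mu)/\kappa_1(\mu)^3$), followed by a Taylor expansion of $H_\mu'(v)$ around $v = 0$ and summation over $m$. The outcome is that, at leading order,
\begin{align*}
\prod_{m \le M_\ell} B_{\mu_N^{(m)}}^{(N)}\bigl(\text{shift by } c_\ell \text{ at position } i_\ell\bigr) \approx e^{c_\ell \cE_N(M_\ell)} \cdot \frac{\Phi_{\cV_N(M_\ell)}(\rho_N + c_\ell e_{i_\ell})}{\Phi_{\cV_N(M_\ell)}(\rho_N)},
\end{align*}
where $\Phi_t(\vec{z}) = \prod_i e^{(t/2)(z_i - N + 1/2)^2}/e^{(t/2)(-i + 1/2)^2}$ is the Brownian motion generating function computed in the proof of Corollary 2.7. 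The Cauchy-determinant prefactor appearing in Theorem 4.1 for the multi-index case differs from $1$ by $O(\max_\ell |u_\ell - v_\ell|^2) = O(1/N^2)$ by Lemma 3.8, hence is absorbed into the $(1 + o(1))$ error. Summing the matched expressions over $(i_1, \ldots, i_k)$ and reassembling the Cauchy factors into $\cD$-operators acting on the $\Phi_{\cV_N(M_\ell)}$'s exactly reconstructs the right-hand side via Corollary 2.7.

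The main obstacle is controlling the tails of the sum over $(i_1, \ldots, i_k)$, where the Taylor expansion of $H_\mu'$ around zero is no longer accurate. The exponential suppression $e^{-c_\ell i_\ell \cV_N(M_\ell)}$ arising from the first-order Taylor term, combined with the hypothesis that $\cV_N(M)$ converges to $\gamma(t) > 0$ (for $t$ bounded away from zero), effectively truncates the range of the sum to $i_\ell = O(\log N)$, on which the Taylor expansion is accurate. Secondary technical issues---coincidences $i_\ell = i_{\ell'}$ where shifts stack at a common coordinate (contributing $O(1/N)$ of the mass) and the required uniformity of the Taylor errors in both $N$ and in the matrix sequence---are addressed by the uniform asymptotic of Theorem 4.1 over $\cM_I \cap \cR^N$, valid under the hypothesis that the squared singular values of each $X^{(N)}(m)$ lie in the fixed compact set $I \subset \R_{>0}$, together with the continuity statements of Proposition 3.6.
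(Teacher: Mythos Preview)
Your plan is correct and matches the paper's argument: expand via Proposition 2.4 into a sum over index tuples $(i_1,\ldots,i_k)$, apply the Bessel asymptotics (the paper's Theorem 5.3) together with the Cauchy-determinant estimate (Lemma 4.7), Taylor-expand $H_\mu'=-\log S_\mu$ near $0$ using Lemma 4.6, and reassemble via Corollary 2.6. Two small caveats. First, the tail truncation cannot rest solely on ``the exponential suppression arising from the first-order Taylor term,'' since that approximation is only valid for $i_\ell/N$ small; for the full range $1\le i_\ell\le N$ the paper instead uses the global monotonicity $S_\mu'\le 0$ on $[-1,0]$ together with a uniform bound on the rational prefactor (its Claim 6.2) to compare every $\sigma_{i_1,\ldots,i_k}$ to $\sigma_{1,\ldots,1}$, with cutoff $N^{1/3}$ rather than $\log N$. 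Second, your parenthetical that coincidences contribute $O(1/N)$ of the mass is false---the leading term $i_1=\cdots=i_k=1$ is itself a coincidence---but this is harmless, since the term-by-term matching $\sigma_{i_1,\ldots,i_k}=\tau_{i_1,\ldots,i_k}(1+o(1))$ coming from Theorem 5.3 holds uniformly over all tuples, coincident or not.
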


The proof of \Cref{thm:convergence_of_moments} combines the asymptotics from the previous sections and our formalism of multivariate Bessel functions. Note that \Cref{thm:convergence_of_moments} makes the assumption that the matrices have \emph{non-random} singular values. The proof of \Cref{thm:main} proceeds straightforwardly from \Cref{thm:convergence_of_moments} by bootstrapping from the deterministic case, see \Cref{ssec:proof_of_main}.

\subsection{Proof of Theorem \ref{thm:convergence_of_moments}}

Let $M_i := M_i(N) := \lfloor t_i N \rfloor$ for $1 \le i \le k$ and $M_{k+1} := 0$. Let $\vec{x}(m) = (x_1^{(m)},\ldots,x_N^{(m)})$ denote the squared singular values of $X^{(N)}(m)$. By \Cref{thm:observable_operators},
\begin{align*}
& \E\left[ \prod_{i=1}^k \sum_{j=1}^N e^{c_i\log y_j^{(N)}(M_i)} \right] \\
& \quad = \left. \cD_{c_1}^{(N)} \prod_{m_1=M_2+1}^{M_1} \frac{\cB_{\log \vec{x}(m_1)}(z_1,\ldots,z_N)}{\cB_{\log \vec{x}(m_1)}(\rho_N)} \cdots \cD_{c_k}^{(N)} \prod_{m_k=M_{k+1}+1}^{M_k} \frac{\cB_{\log \vec{x}(m_k)}(z_1,\ldots,z_N)}{\cB_{\log \vec{x}(m_k)}(\rho_N)} \right|_{\vec{z} = \rho_N}.
\end{align*}
Recall our convention that $\cD_c = \cD_c^{(N)}$ acts on everything to its right (see \Cref{sec:operators}). Expanding out the $\cD_c$ terms, we obtain
\begin{align*}
\E\left[ \prod_{i=1}^k \sum_{j=1}^N e^{c_i\log y_j^{(N)}(M_i)} \right] = & \sum_{i_1,\ldots,i_k=1}^N \left( \prod_{j_1 \ne i_1} \frac{c_1 + z_{i_1} - z_{j_1}}{z_{i_1} - z_{j_1}} \right) \cT_{c_1,z_{i_1}}  \prod_{m_1 = M_2+1}^{M_1} \frac{\cB_{\log \vec{x}(m_1)}(z_1,\ldots,z_N)}{\cB_{\log \vec{x}(m_1)}(\rho_N)} \\
& \left. \cdots \left( \prod_{j_k \ne i_k} \frac{c_k + z_{i_k} - z_{j_k}}{z_{i_k} - z_{j_k}} \right) \cT_{c_k,z_{i_k}} \prod_{m_k = M_{k+1} + 1}^{M_k} \frac{\cB_{\log \vec{x}(m_k)}(z_1,\ldots,z_N)}{\cB_{\log \vec{x}(m_k)}(\rho_N)} \right|_{\vec{z} = \rho_N}.
\end{align*}
The products over $j_\ell \ne i_\ell$ are understood to range over $1 \le j_\ell \le N$ (for $1 \le \ell \le k$). Like $\cD_c$, the shift operators $\cT_{c,z_i}$ act on everything to the right of it. If the $\cT_{c,z_i}$ is contained between parentheses, its action is confined within those parentheses.

Since $\cT_c fg  = (\cT_c f)(\cT_c g)$, we get
\begin{align*}
\E\left[ \prod_{i=1}^k \sum_{j=1}^N e^{c_i\log y_j^{(N)}(M_i)} \right] = \sum_{i_1,\ldots,i_k=1}^N \sigma_{i_1,\ldots,i_k}
\end{align*}
where
\begin{align*}
\sigma_{i_1,\ldots,i_k} =& \prod_{\ell=1}^k \left( \left. \cT_{c_1,z_{i_1}} \cdots \cT_{c_{\ell-1},z_{i_{\ell-1}}} \prod_{j_\ell \ne i_\ell} \frac{c_\ell + z_{i_\ell} - z_{j_\ell}}{z_{i_\ell} - z_{j_\ell}} \right|_{\vec{z} = \rho_N} \right) \\
& \quad \times \left( \prod_{m_\ell=M_{\ell+1}+1}^{M_\ell} \left. \cT_{c_1,z_{i_1}} \cdots \cT_{c_\ell,z_{i_\ell}} \frac{\cB_{\log \vec{x}(m_k)}(z_1,\ldots,z_N)}{\cB_{\log \vec{x}(m_k)}(\rho_N)} \right|_{\vec{z} = \rho_N} \right).
\end{align*}

Set
\begin{align*}
& \tau_{i_1,\ldots,i_k} := \left( \prod_{i=1}^k e^{c_i\cE_N(M_i)} \right) \sum_{i_1,\ldots,i_k=1}^N \left( \prod_{j_1 \ne i_1} \frac{c_1 + z_{i_1} - z_{j_1}}{z_{i_1} - z_{j_1}} \right) \cT_{c_1,z_{i_1}} \left( \prod_{a_1=1}^N  \frac{\exp\left[ \Delta_1 \left(z_{a_1} - N + \frac{1}{2}\right)^2\right]}{\exp\left[ \Delta_1 \left(-a_1 + \frac{1}{2}\right)^2\right]} \right) \\
& \quad \quad \quad \quad \quad \quad \times \cdots \times \left. \left( \prod_{j_k \ne i_k} \frac{c_k + z_{i_k} - z_{j_k}}{z_{i_k} - z_{j_k}} \right) \cT_{c_k,z_{i_k}} \left( \prod_{a_k=1}^N \frac{\exp\left[ \Delta_k \left(z_{a_k} - N + \frac{1}{2}\right)^2\right]}{\exp\left[ \Delta_k \left(-a_k + \frac{1}{2}\right)^2\right]} \right) \right|_{\vec{z} = \rho_N} \\
&\quad = \left( \prod_{i=1}^k e^{c_i \cE_N(M_i)} \right) \cD_{c_1} \left( \prod_{a_1=1}^N \frac{\exp\left[ \Delta_1 \left( z_{a_1} - N + \frac{1}{2}\right)^2 \right]}{\exp\left[ \Delta_1 \left( -a_1 + \frac{1}{2}\right)^2 \right]} \right)\cdots \left. \cD_{c_k} \left( \prod_{a_k=1}^N \frac{\exp\left[ \Delta_k \left( z_{a_k} - N + \frac{1}{2}\right)^2 \right]}{\exp\left[ \Delta_k \left( -a_k + \frac{1}{2}\right)^2 \right]} \right) \right|_{\vec{z} = \rho_N}
\end{align*}
where
\[ \Delta_\ell := \frac{1}{2N} \sum_{m=M_{\ell+1}+1}^{M_\ell} \frac{\kappa_2(\mu_N^{(m)})}{\kappa_1(\mu_N^{(m)})^2} = \frac{1}{2}\left( \cV_N(M_\ell) - \cV_N(M_{\ell+1}) \right), \quad \quad 1 \le \ell \le k. \]
The equality following the definition of $\tau_{i_1,\ldots,i_k}$ follows from the definition for $\cD_c$, as in the calculation (though in reverse) at the start of this proof.

We prove that
\begin{align} \label{eq:sigma_tau_replace}
\sigma_{i_1,\ldots,i_k} = \tau_{i_1,\ldots,i_k} (1 + o(1))
\end{align}
as $N\to\infty$, uniformly over $1 \le i_1,\ldots,i_k \le N^{1/3}$. Furthermore, we prove that if $N$ is sufficiently large then
\begin{gather} \label{eq:sigma_positive}
\sigma_{i_1,\ldots,i_k} > 0, \\ \label{eq:tau_positive}
\tau_{i_1,\ldots,i_k} > 0,
\end{gather}
for $1 \le i_1,\ldots,i_k \le N$, and there exists $c > 0$ such that
\begin{gather} \label{eq:sigma_comparison}
\sigma_{i_1,\ldots,i_k} \le \sigma_{1,\ldots,1} e^{-c N^{1/3}}, \\ \label{eq:tau_comparison}
\tau_{i_1,\ldots,i_k} \le \sigma_{1,\ldots,1} e^{-c N^{1/3}}
\end{gather}
for $1 \le i_1,\ldots,i_k \le N$ such that $i_j > N^{1/3}$ for some $1 \le j \le k$. Indeed, \Cref{thm:convergence_of_moments} would follow because
\begin{align*}
\E\left[ \prod_{i=1}^k \sum_{j=1}^N e^{c_i \log y_j^{(N)}(M_i)} \right] =& \sum_{i_1,\ldots,i_k=1}^N \sigma_{i_1,\ldots,i_k} \\
=& (1 + o(1)) \sum_{i_1,\ldots,i_k=1}^N \tau_{i_1,\ldots,i_k} \\
=& (1 + o(1)) \left( \prod_{i=1}^k e^{c_i\cE_N(M_i)} \right) \cD_{c_1} \left( \prod_{a_1=1}^N  \frac{\exp\left[ \Delta_1 \left(z_{a_1} - N + \frac{1}{2}\right)^2\right]}{\exp\left[ \Delta_1 \left(-a_1 + \frac{1}{2}\right)^2\right]} \right) \\
& \quad \quad \left.\cdots \cD_{c_k} \left( \prod_{a_k=1}^N \frac{\exp\left[ \Delta_k \left(z_{a_k} - N + \frac{1}{2}\right)^2\right]}{\exp\left[ \Delta_k \left(-a_k + \frac{1}{2}\right)^2\right]} \right) \right|_{\vec{z} = \rho_N} \\
=& (1 + o(1)) \left( \prod _{i=1}^k e^{c_i\cE_N(M_i)} \right) \E \left[ \prod_{i=1}^k \sum_{j=1}^N e^{c_i \left( \xi_j^{(N)}\left(\frac{1}{4}\cV_N(M_i)\right) - \frac{N}{2} \cV_N(M_i) \right)} \right].
\end{align*}
The second equality comes from \eqref{eq:sigma_tau_replace} applied to the terms with $i_1,\ldots,i_k \le N^{1/3}$, and the remaining terms are tail terms which can be replaced by \eqref{eq:sigma_positive}-\eqref{eq:tau_comparison}. The third equality follows from the definition of $\tau_{i_1,\ldots,i_k}$ and the definition of $\cD_c$. The fourth equality uses \Cref{thm:brownian_observable}.

Therefore, our goal is to prove \eqref{eq:sigma_tau_replace}-\eqref{eq:tau_comparison}. For this, we rely on the following claims:

\begin{claim} \label{claim:rational_term}
For any $1 \le i_1,\ldots,i_\ell \le N$, we have
\begin{gather} \label{eq:positivity_rational_term}
\left. \cT_{c_1,z_{i_1}} \cdots \cT_{c_{\ell-1},z_{i_{\ell-1}}} \prod_{j \ne i_\ell} \frac{c_\ell + z_{i_\ell} - z_j}{z_{i_\ell} - z_j} \right|_{\vec{z} = \rho_N} > 0 \\ \label{eq:comparison_rational_term}
\left. \cT_{c_1,z_{i_1}} \cdots \cT_{c_{\ell-1},z_{i_{\ell-1}}} \prod_{j \ne i_\ell} \frac{c_\ell + z_{i_\ell} - z_j}{z_{i_\ell} - z_j} \right|_{\vec{z} = \rho_N} \le C \left. \cT_{c_1,z_1} \cdots \cT_{c_{\ell-1},z_1} \prod_{j \ne 1} \frac{c_\ell + z_1 - z_j}{z_1 - z_j} \right|_{\vec{z} = \rho_N}
\end{gather}
for some constant $C > 1$ uniform in the $i_1,\ldots,i_k$ but depending on $c_1,\ldots,c_k > 0$ satisfying $c_1 + \cdots + c_k < 1$.
\end{claim}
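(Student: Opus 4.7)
The plan is to expand the shift operators explicitly and then analyze the resulting product of rational factors. Since $\cT_{c,z_i}$ replaces $z_i$ by $z_i+c$, applying $\cT_{c_1,z_{i_1}}\cdots\cT_{c_{\ell-1},z_{i_{\ell-1}}}$ and then specializing to $\vec{z}=\rho_N$ replaces each $z_j$ by $\alpha_j := N-j+s_j$, where $s_j := \sum_{a\le \ell-1,\, i_a=j} c_a$. The quantity in the claim therefore equals
\[
P(i_1,\ldots,i_\ell) \;:=\; \prod_{j\ne i_\ell}\frac{c_\ell+\alpha_{i_\ell}-\alpha_j}{\alpha_{i_\ell}-\alpha_j}.
\]
Note that $\sum_j s_j = c_1+\cdots+c_{\ell-1}$, so each $s_j$ lies in $[0,S]$ with $S:=c_1+\cdots+c_{\ell-1}$, and $S+c_\ell<1$. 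This bound, which is exactly where the hypothesis $c_1+\cdots+c_k<1$ enters, drives both parts of the claim.

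For positivity \eqref{eq:positivity_rational_term}, I would verify that each factor of $P(i_1,\ldots,i_\ell)$ is a ratio of two quantities of the same sign. When $j>i_\ell$, the denominator $(j-i_\ell)+s_{i_\ell}-s_j$ is positive since $j-i_\ell\ge 1 > s_j-s_{i_\ell}$, and adding $c_\ell>0$ keeps the numerator positive. When $j<i_\ell$, the denominator is negative by the symmetric argument, while the numerator $c_\ell+(j-i_\ell)+s_{i_\ell}-s_j$ is also negative because $c_\ell+s_{i_\ell}-s_j\le c_\ell+S<1\le i_\ell-j$.

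For the comparison \eqref{eq:comparison_rational_term}, I would compare $P(i_1,\ldots,i_\ell)$ with the special value
\[
P(1,\ldots,1) \;=\; \prod_{j=2}^N\frac{(j-1)+c_1+\cdots+c_\ell}{(j-1)+c_1+\cdots+c_{\ell-1}},
\]
which by Stirling is $\asymp N^{c_\ell}$. Substituting $m=j-i_\ell$, the general product factors as
\[
P(i_1,\ldots,i_\ell) = \prod_{m=1}^{N-i_\ell}\!\left(1+\frac{c_\ell}{m+s_{i_\ell}-s_{i_\ell+m}}\right)\prod_{m=1}^{i_\ell-1}\!\left(1-\frac{c_\ell}{m+s_{i_\ell-m}-s_{i_\ell}}\right).
\]
Every factor of the second product lies in $(0,1)$, so that product is bounded by $1$. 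For the first product, the bound $s_{i_\ell+m}\le S$ gives $m+s_{i_\ell}-s_{i_\ell+m}\ge m-S>0$, so each factor is at most $1+c_\ell/(m-S)$. The resulting product over $m$ equals $\Gamma(N+c_\ell-S)\Gamma(1-S)/[\Gamma(N-S)\Gamma(1-S+c_\ell)]$, which is again $\asymp N^{c_\ell}$ by Stirling, with implied constant depending only on $S$ and $c_\ell$. Dividing by $P(1,\ldots,1)\asymp N^{c_\ell}$ yields a uniform bound $P(i_1,\ldots,i_\ell)\le C\cdot P(1,\ldots,1)$.

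The main obstacle is the possibility of near-vanishing denominators—factors in which $m$ is small and the shift difference $s_{i_\ell\pm m}-s_{i_\ell}$ is close to $m$. The hypothesis $c_1+\cdots+c_k<1$ is exactly what provides the positive gap $1-S-c_\ell>0$ keeping every denominator bounded below, and this is what forces the resulting comparison constant $C$ to depend only on $c_1,\ldots,c_\ell$ (not on $N$ or on the specific indices $(i_1,\ldots,i_\ell)$).
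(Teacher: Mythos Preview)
Your argument is correct, but the paper takes a somewhat different route that avoids Stirling's formula entirely.

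The paper first regroups the shifts by the \emph{distinct} values among $i_1,\ldots,i_{\ell-1}$: writing $i_1',\ldots,i_{r-1}'$ for the distinct indices in $\{i_1,\ldots,i_{\ell-1}\}\setminus\{i_\ell\}$, $i_r'=i_\ell$, and $c_a'$ for the total shift accumulated at $i_a'$, the expression factors as
\[
\left(\prod_{a=1}^{r-1}\frac{(z_{i_r'}-z_{i_a'})(c_\ell+c_r'+z_{i_r'}-c_a'-z_{i_a'})}{(c_\ell+c_r'+z_{i_r'}-z_{i_a'})(z_{i_r'}-c_a'-z_{i_a'})}\right)\cdot\prod_{j\ne i_r'}\frac{c_\ell+c_r'+z_{i_r'}-z_j}{z_{i_r'}-z_j}.
\]
The first product has at most $\ell-1$ factors, each a cross-ratio bounded uniformly by a constant depending only on $c_1,\ldots,c_\ell$ (this is where $c_1+\cdots+c_\ell<1$ enters). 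The second product, evaluated at $\rho_N$, is $\prod_{j\ne i}\frac{c_\ell+c_r'+j-i}{j-i}$, which the paper shows is \emph{monotone decreasing} in $i$ and monotone increasing in the total shift $c_r'$, so it is bounded above by its value at $i=1$ with shift $c_1+\cdots+c_{\ell-1}$, which is exactly the right-hand side of the claim.

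Your approach instead keeps the full product, bounds the $j<i_\ell$ factors by $1$, bounds the $j>i_\ell$ factors by a Gamma ratio, and then compares two quantities that are each $\asymp N^{c_\ell}$ via Stirling. This is perfectly valid and arguably more transparent about the mechanism, but it replaces a single exact monotonicity step by two asymptotic estimates whose ratio must then be controlled uniformly in $N$. The paper's regrouping has the advantage that the comparison with the right-hand side is an \emph{exact} inequality, so the constant $C$ comes solely from the finitely many cross-ratio factors and its dependence on $c_1,\ldots,c_\ell$ is explicit.
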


\begin{claim} \label{claim:bessel}
Let $\mu_N = \tfrac{1}{N} \sum_{i=1}^N \delta_{x_i}$ and $\vec{x} = (x_1,\ldots,x_N)$. Then
\begin{align} \label{eq:bessel_shift_asymptotics}
\left. \cT_{c_1,z_{i_1}} \cdots \cT_{c_\ell,z_{i_\ell}} \frac{\cB_{\log \vec{x}}(z_1,\ldots,z_N)}{\cB_{\log \vec{x}}(\rho_N)} \right|_{\vec{z} = \rho_N} = \prod_{j=1}^\ell \exp\left( -c_j \log S_{\mu_N}(-\tfrac{i_j}{N}) - \frac{c_j(c_j + 1)}{2N} \frac{S_{\mu_N}'(-\frac{i_j}{N})}{S_{\mu_N}(-\frac{i_j}{N})} \right) (1 + o(N^{-1}))
\end{align}
as $N\to\infty$, uniformly over $\mu_N \in \cM_I \cap \cR^N$ and $i_1,\ldots,i_\ell \in \tfrac{1}{N} \Z \cap [0,1]$. In particular, if $i_1,\ldots,i_\ell \le N^{1/3}$, then
\begin{align} \label{eq:bessel_shift_asymptotics_near0}
\left. \cT_{c_1,z_{i_1}} \cdots \cT_{c_\ell,z_{i_\ell}} \frac{\cB_{\log \vec{x}}(z_1,\ldots,z_N)}{\cB_{\log \vec{x}}(\rho_N)} \right|_{\vec{z} = \rho_N} &= \prod_{j=1}^\ell \kappa_1(\mu_N)^{c_j} \exp\left[ \frac{1}{N} \frac{\kappa_2(\mu_N)}{\kappa_1(\mu_N)^2} \left( -c_j i_j + \frac{c_j(c_j + 1)}{2} \right) \right] (1 + o(N^{-1})).
\end{align}
\end{claim}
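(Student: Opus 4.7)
The plan is to rewrite the left-hand side of \eqref{eq:bessel_shift_asymptotics} in the notation of \Cref{sec:asymptotics_bessel} and then apply \Cref{thm:bessel_asymptotics} together with \Cref{thm:psi_cauchy}. By the symmetry of $\cB_{\log \vec{x}}$ in its arguments, applying $\cT_{c_1,z_{i_1}} \cdots \cT_{c_\ell,z_{i_\ell}}$ to $\cB_{\log \vec{x}}/\cB_{\log \vec{x}}(\rho_N)$ and evaluating at $\vec{z}=\rho_N$ simply replaces each entry $N-i_j$ of $\rho_N$ by $N-i_j+c_j$; the result is precisely
\[ B_{\mu_N}^{(N)}\bigl(N(\wt u_1 + 1),\dots,N(\wt u_\ell+1);\, N(\wt v_1 + 1),\dots,N(\wt v_\ell+1)\bigr), \quad \wt u_j := -\tfrac{i_j}{N}+\tfrac{c_j}{N},\quad \wt v_j := -\tfrac{i_j}{N}, \]
so that $\wt u_j - \wt v_j = c_j/N$ is small.

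Once this identification is in place, I would apply \Cref{thm:bessel_asymptotics} directly. By \Cref{thm:psi_cauchy}, the ratio of Cauchy determinants together with the $\sqrt{\psi_{\mu_N}'(\psi_{\mu_N}^{-1}(\cdot))}$ prefactors contributes $1 + O(\max_j |\wt u_j - \wt v_j|^2) = 1 + O(N^{-2})$, which is absorbed into the $1 + o(N^{-1})$ error. It then remains to Taylor expand the two nontrivial factors in powers of $c_j/N$. Using $H_\mu'(u) = -\log S_\mu(u)$ from \eqref{eq:H'} and $H_\mu''(u) = -S_\mu'(u)/S_\mu(u)$ from \eqref{eq:H''},
\[ N\bigl(H_{\mu_N}(\wt u_j) - H_{\mu_N}(\wt v_j)\bigr) = -c_j \log S_{\mu_N}(\wt v_j) - \frac{c_j^2}{2N}\frac{S_{\mu_N}'(\wt v_j)}{S_{\mu_N}(\wt v_j)} + O(N^{-2}), \]
while $\log \sqrt{S_{\mu_N}(\wt v_j)/S_{\mu_N}(\wt u_j)} = -\tfrac{c_j}{2N}\,S_{\mu_N}'(\wt v_j)/S_{\mu_N}(\wt v_j) + O(N^{-2})$. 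The two contributions combine to produce the coefficient $-c_j(c_j+1)/(2N)$ in front of $S_{\mu_N}'(-i_j/N)/S_{\mu_N}(-i_j/N)$, and taking the product over $j$ yields \eqref{eq:bessel_shift_asymptotics}.

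For the refinement \eqref{eq:bessel_shift_asymptotics_near0} in the regime $i_j \le N^{1/3}$, I would further Taylor expand $-\log S_{\mu_N}(-i_j/N)$ and $S_{\mu_N}'(-i_j/N)/S_{\mu_N}(-i_j/N)$ around $0$, using the values $S_\mu(0)=1/\kappa_1(\mu)$ and $S_\mu'(0)=-\kappa_2(\mu)/\kappa_1(\mu)^3$ from \Cref{thm:S_and_S'}. This gives $-c_j\log S_{\mu_N}(-i_j/N) = c_j \log \kappa_1(\mu_N) - (c_j i_j/N)\kappa_2(\mu_N)/\kappa_1(\mu_N)^2 + O(i_j^2/N^2)$ and an analogous expansion for the quadratic term in the exponent. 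When $i_j \le N^{1/3}$ the remainders $O(i_j^2/N^2)$ and $(1/N)\,O(i_j/N)$ are both $o(N^{-1})$, and exponentiating delivers \eqref{eq:bessel_shift_asymptotics_near0}.

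The main point requiring care is that the $o(\cdot)$ and $O(\cdot)$ terms remain uniform over $\mu_N \in \cM_I \cap \cR^N$ and over the stated range of $i_j$. This uniformity is already part of the conclusions of \Cref{thm:bessel_asymptotics} and \Cref{thm:psi_cauchy}, while the Taylor expansions of $S_\mu$ and its logarithmic derivative on the neighborhood $U$ from \Cref{thm:psi} are uniform in $\mu \in \cM_I$ by compactness together with the continuous dependence of $S_\mu$ and $\psi_\mu^{-1}$ on $\mu$. I do not expect any substantive obstacle beyond careful bookkeeping of the remainder terms through the expansions above.
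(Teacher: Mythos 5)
Your proposal is correct and follows essentially the same path as the paper's own proof: rewrite the shifted Bessel quotient as $B_{\mu_N}^{(N)}(N-i_1+c_1,\ldots;N-i_1,\ldots)$, invoke \Cref{thm:bessel_asymptotics} and \Cref{thm:psi_cauchy} to reduce to $\prod_j \sqrt{S_{\mu_N}(\tilde v_j)/S_{\mu_N}(\tilde u_j)}\,e^{N(H_{\mu_N}(\tilde u_j)-H_{\mu_N}(\tilde v_j))}(1+o(N^{-1}))$, Taylor-expand using \eqref{eq:H'} and \eqref{eq:H''} to collect the $-c_j(c_j+1)/(2N)$ coefficient, and then for the near-$0$ regime expand once more in $i_j/N$ using the values $S_\mu(0)$, $S_\mu'(0)$ from \Cref{thm:S_and_S'} with remainder $O(N^{-4/3}) = o(N^{-1})$. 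The decomposition, the role played by each lemma, and the bookkeeping of error terms all coincide with the paper's argument.
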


Before providing the proofs of Claims \ref{claim:rational_term} and \ref{claim:bessel}, we explain how \eqref{eq:sigma_tau_replace}-\eqref{eq:tau_comparison} follow from the claims. We can rewrite
\begin{align*}
\tau_{i_1,\ldots,i_k} =& \left( \prod_{i=1}^k e^{c_i\cE_N(M_i)} \right) \prod_{\ell=1}^k \left( \left. \cT_{c_1,z_{i_1}} \cdots \cT_{c_{\ell-1},z_{i_{\ell-1}}} \prod_{j_\ell \ne i_\ell} \frac{c_\ell + z_{i_\ell} - z_{j_\ell}}{z_{i_\ell} - z_{j_\ell}} \right|_{\vec{z} = \rho_N} \right) \\
& \quad \times \prod_{m_\ell = M_{\ell+1}+1}^{M_\ell} \left( \left. \cT_{c_1,z_{i_1}} \cdots \cT_{c_\ell,z_{i_\ell}} \prod_{a=1}^N \frac{\exp\left[ \frac{1}{2N} \frac{\kappa_2(\mu_N^{(m_\ell)})}{\kappa_1(\mu_N^{(m_\ell)})^2} \left(z_a - N + \frac{1}{2}\right)^2\right]}{\exp\left[ \frac{1}{2N} \frac{\kappa_2(\mu_N^{(m_\ell)})}{\kappa_1(\mu_N^{(m_\ell)})^2} \left(-a + \frac{1}{2}\right)^2\right]} \right|_{\vec{z} = \rho_N} \right) \\
=& \prod_{\ell=1}^k \left( \left. \cT_{c_1,z_{i_1}} \cdots \cT_{c_{\ell-1},z_{i_{\ell-1}}} \prod_{j_\ell \ne i_\ell} \frac{c_\ell + z_{i_\ell} - z_{j_\ell}}{z_{i_\ell} - z_{j_\ell}} \right|_{\vec{z} = \rho_N} \right) \\
& \quad \times \left(\prod_{m_\ell = M_{\ell+1}+1}^{M_\ell} \prod_{j=1}^\ell \kappa_1(\mu_N^{(m_\ell)})^{c_j} \exp\left[ \frac{1}{N} \frac{\kappa_2(\mu_N^{(m_\ell)})}{\kappa_1(\mu_N^{(m_\ell)})^2} \left(-c_j i_j + \frac{c_j(c_j+1)}{2} \right) \right] \right)
\end{align*}
Then \eqref{eq:bessel_shift_asymptotics_near0} and the definition of $\sigma_{i_1,\ldots,i_k}$ immediately imply \eqref{eq:sigma_tau_replace}. The positivity statements \eqref{eq:sigma_positive} and \eqref{eq:tau_positive} also follow from \eqref{eq:positivity_rational_term} and \eqref{eq:bessel_shift_asymptotics}.

We must still prove \eqref{eq:sigma_comparison} and \eqref{eq:tau_comparison}. To prove \eqref{eq:tau_comparison}, notice that the last line in the expression above for $\tau_{i_1,\ldots,i_k}$ is strictly decreasing in each of $i_1,\ldots,i_k$, and the decay is exponential. Using the fact that $M_\ell - M_{\ell+1}$ is $O(N)$, and that $M_k - M_{k+1} = M_k$ grows linearly with $N$, we have that if $N$ is sufficiently large, then there exists $c > 0$ such that
\[ \tau_{i_1,\ldots,i_k} \le \tau_{1,\ldots,1} e^{-cN^{1/3}} \]
for $1 \le i_1,\ldots,i_k \le N$ such that $i_j > N^{1/3}$ for some $1 \le j \le k$. By \eqref{eq:sigma_tau_replace} and \Cref{claim:rational_term}, we obtain \eqref{eq:tau_comparison}. 

To prove \eqref{eq:sigma_comparison}, observe that by \Cref{claim:bessel},
\begin{align*}
\sigma_{i_1,\ldots,i_k} =& \prod_{\ell=1}^k \left( \left. \cT_{c_1,z_{i_1}} \cdots \cT_{c_{\ell-1},z_{i_{\ell-1}}} \prod_{j_\ell \ne i_\ell} \frac{c_\ell + z_{i_\ell} - z_{j_\ell}}{z_{i_\ell} - z_{j_\ell}} \right|_{\vec{z} = \rho_N} \right) \\
& \quad \times \left(\prod_{m_\ell = M_{\ell+1}+1}^{M_\ell} \prod_{j=1}^\ell \exp\left[ -c_j \log S_{\mu_N^{(m_\ell)}}(-\tfrac{i_j}{N}) - \frac{c_j(c_j+1)}{2N} \frac{S_{\mu_N^{(m_\ell)}}'(-\frac{i_j}{N})}{S_{\mu_N^{(m_\ell)}}(-\frac{i_j}{N})} \right] \right) (1 + o(1)) \\
=& \prod_{\ell=1}^k \left( \left. \cT_{c_1,z_{i_1}} \cdots \cT_{c_{\ell-1},z_{i_{\ell-1}}} \prod_{j_\ell \ne i_\ell} \frac{c_\ell + z_{i_\ell} - z_{j_\ell}}{z_{i_\ell} - z_{j_\ell}} \right|_{\vec{z} = \rho_N} \right) \\
& \quad \times \left( \prod_{j=1}^\ell \exp\left[ \sum_{m = 1}^{M_\ell} \left( -c_j \log S_{\mu_N^{(m)}}(-\tfrac{i_j}{N}) - \frac{c_j(c_j+1)}{2N} \frac{S_{\mu_N^{(m)}}'(-\frac{i_j}{N})}{S_{\mu_N^{(m)}}(-\frac{i_j}{N})} \right) \right] \right) (1 + o(1))
\end{align*}
as $N\to\infty$, uniformly over $1 \le i_1,\ldots,i_k \le N$. We know that $-\log S_\mu(u)$ is an increasing function on $[-1,0]$ with
\[ \left. -\frac{d}{du} \log S_\mu(u) \right|_{u=0} = -\frac{S_\mu'(0)}{S_\mu(0)} = \frac{\kappa_2(\mu)}{\kappa_1(\mu)^2} \]
where we use \Cref{thm:S_and_S'}. Condition \eqref{eq:variance_curve} then implies
\[ -\sum_{m=1}^{M_\ell} \left. \frac{d}{du} \log S_{\mu_N^{(m)}}(u) \right|_{u=0} = \sum_{m=1}^{M_\ell} \frac{\kappa_2(\mu_N^{(m)})}{\kappa_1(\mu_N^{(m)})^2} = \gamma(t_\ell) + o(1) \]
as $N\to\infty$. Recalling that $\gamma$ is a continuous map from $\R_{>0}$ to $\R_{>0}$, we see that the expession above is positive and uniformly bounded away from $0$ in $N$. By \Cref{thm:S_and_S'} again,
\[ -\left. \frac{d^2}{du^2} \log S_\mu(u) \right|_{u=0} = \frac{S_\mu'(0)^2}{S_\mu(0)^2}-\frac{S_\mu''(0)}{S_\mu(0)} \]
which is bounded for $\mu \in \cM_I$, where we use the continuity of $\mu \mapsto S_\mu$ on $\cM_I$ (see \Cref{thm:psi}) and the compactness of $\cM_I$. Thus
\[ - \sum_{m=1}^{M_\ell} \left. \frac{d^2}{du^2} \log S_{\mu_N^{(m)}}(u) \right|_{u=0} \]
is bounded, uniformly in $N$. These considerations imply
\begin{align*}
-\sum_{m=1}^{M_\ell} \log S_{\mu_N^{(m)}}(-\tfrac{i}{N}) &\le -\sum_{m=1}^{M_\ell} \log S_{\mu_N^{(m)}}(-\tfrac{1}{N}) \quad \quad \mbox{for $1 \le i \le N$}, \\
-\sum_{m=1}^{M_\ell} \log S_{\mu_N^{(m)}}(-\tfrac{i}{N}) &\le -\sum_{m=1}^{M_\ell} \log S_{\mu_N^{(m)}}(-\tfrac{1}{N}) - CN^{1/3} \quad \quad \mbox{for $N^{1/3} < i \le N$}
\end{align*} 
for some constant $C > 0$ (since $M_\ell \asymp N$). The term
\[ - \frac{1}{2N} \sum_{m=1}^{M_\ell} \frac{S_{\mu_N^{(m)}}'(-\frac{i}{N})}{S_{\mu_N^{(m)}}(-\frac{i}{N})} \]
is also bounded, uniformly over $1 \le i \le N$ and in $N$, appealing again to \Cref{thm:psi}. Therefore
\begin{align*}
& \sum_{m=1}^{M_\ell} \left( -c_j \log S_{\mu_N^{(m)}}(-\tfrac{i}{N}) - \frac{c_j(c_j+1)}{2N} \frac{S_{\mu_N^{(m)}}'(-\tfrac{i}{N})}{S_{\mu_N^{(m)}}(-\frac{i}{N})} \right) \\
& \quad \quad \le \sum_{m=1}^{M_\ell} \left( -c_j \log S_{\mu_N^{(m)}}(-\tfrac{1}{N}) - \frac{c_j(c_j+1)}{2N} \frac{S_{\mu_N^{(m)}}'(-\frac{1}{N})}{S_{\mu_N^{(m)}}(-\frac{1}{N})} \right) - C' N^{1/3}
\end{align*}
for $1 \le i \le N$, for some $C' > 0$. Combining this with \Cref{claim:rational_term} implies \eqref{eq:sigma_comparison}.

Having justified that Claims \ref{claim:rational_term} and \ref{claim:bessel} imply \eqref{eq:sigma_tau_replace}-\eqref{eq:tau_comparison}, it remains to prove these claims.

\begin{proof}[Proof of \Cref{claim:rational_term}]
Let $i_1',\ldots,i_{r-1}'$ be the distinct elements of $\{i_1,\ldots,i_{\ell-1}\} \setminus \{i_\ell\}$ and set $i_r' = i_\ell$. Define
\[ c_1' := \sum_{\substack{1 \le a < r \\ i_a = i_1'}} c_a, \quad \ldots, \quad c_r' := \sum_{\substack{1 \le a < r \\ i_a = i_r'}} c_a, \]
where we note that $c_1',\ldots,c_{r-1}' > 0$, but $c_r' > 0$ if and only if $i_r' = i_\ell$ is among $i_1,\ldots,i_{\ell-1}$, i.e. $i_\ell \in \{i_1,\ldots,i_{\ell-1}\}$. Then
\begin{align*}
& \cT_{c_1,z_{i_1}} \cdots \cT_{c_{\ell-1},z_{i_{\ell-1}}} \prod_{j \ne i_\ell} \frac{c_\ell + z_{i_\ell} - z_j}{z_{i_\ell} - z_j} = \cT_{c_1',z_{i_1'}} \cdots \cT_{c_r',z_{i_r'}} \prod_{j \ne i_r'} \frac{c_\ell + z_{i_r'} - z_j}{z_{i_r'} - z_j} \\
&\quad \quad = \left( \prod_{a = 1}^{r-1} \frac{(z_{i_r'} - z_{i_a'})(c_\ell + c_r' + z_{i_r'} - c_a' - z_{i_a'})}{(c_\ell + c_r' + z_{i_r'} - z_{i_a'})(z_{i_r'} - c_a' - z_{i_a'})} \right)\left( \prod_{j \ne i_r'} \frac{c_\ell + c_r' + z_{i_r'} - z_j}{z_{i_r'} - z_j} \right).
\end{align*}
Since $0 < c_1 + \cdots + c_\ell < 1$, upon evaluating at $\vec{z} = \rho_N = (N-1,N-2,\ldots,0)$, we see that the expression above is positive. This proves \eqref{eq:positivity_rational_term}. Furthermore, we have a bound
\[ \left. \prod_{a = 1}^{r-1} \frac{(z_{i_r'} - z_{i_a'})(c_\ell + c_r' + z_{i_r'} - c_a' - z_{i_a'})}{(c_\ell + c_r' + z_{i_r'} - z_{i_a'})(z_{i_r'} - c_a' - z_{i_a'})} \right|_{\vec{z} = \rho_N} \le C \]
where we may make $C$ uniform over $1 \le i_1,\ldots,i_\ell \le N$ by virtue of $0 < c_1 + \cdots + c_\ell < 1$ and the positivity of $c_1,\ldots,c_\ell$. Thus
\begin{align*}
\cT_{c_1,z_{i_1}} \cdots \cT_{c_{\ell-1},z_{i_{\ell-1}}} \prod_{j \ne i_\ell} \frac{c_\ell + z_{i_\ell} - z_j}{z_{i_\ell} - z_j} \le C \left. \prod_{j \ne i_r'} \frac{c_\ell + c_r' + z_{i_r'} - z_j}{z_{i_r'} - z_j} \right|_{\vec{z} = \rho_N}.
\end{align*}
Next, observe that
\begin{align*}
\left. \prod_{j \ne i} \frac{c_\ell + c_r' + z_i - z_j}{z_i - z_j} \right|_{\vec{z} = \rho_N} &= \prod_{j \ne i} \frac{c_\ell + c_r' + j - i}{j - i} \\
&\le \prod_{j \ne 1} \frac{c_\ell + c_r' + j - 1}{j - 1} \\
& \le \prod_{j \ne 1} \frac{c_1 + \cdots + c_\ell + j - 1}{j - 1} \\
& = \left. \cT_{c_1,z_1} \cdots \cT_{c_{\ell-1},z_1} \prod_{j \ne 1} \frac{c_\ell + z_1 - z_j}{z_1 - z_j} \right|_{\vec{z} = \rho_N}
\end{align*}
where the third line uses the fact that $c_\ell + c_r' \le c_1 + \cdots + c_\ell$, recalling that products are restricted over $1 \le j \le N$. Combining these inequalities, the claim follows.
\end{proof}

\begin{proof}[Proof of \Cref{claim:bessel}]
\Cref{thm:bessel_asymptotics} and \Cref{thm:psi_cauchy} imply
\begin{align*}
\left. \cT_{c_1,z_{i_1}} \cdots \cT_{c_\ell,z_{i_\ell}} \frac{\cB_{\log \vec{x}}(z_1,\ldots,z_N)}{\cB_{\log \vec{x}}(\rho_N)} \right|_{\vec{z} = \rho_N} &= B_{\mu_N}(N - i_1 + c_1,\ldots,N - i_\ell + c_\ell; N - i_1,\ldots, N - i_\ell) \\
&= \prod_{j=1}^\ell \frac{\sqrt{S_{\mu_N}(\frac{-i_j}{N})} e^{N H_{\mu_N}(\frac{-i_j + c_j}{N})} }{\sqrt{S_{\mu_N}(\frac{-i_j + c_j}{N})} e^{N H_{\mu_N}(\frac{-i_j}{N})}} (1 + o(N^{-1})) \\
&= \prod_{j=1}^\ell \exp\left( -c_j \log S_{\mu_N}(-\tfrac{i_j}{N}) - \frac{c_j(c_j + 1)}{2N} \frac{S_{\mu_N}'(-\frac{i_j}{N})}{S_{\mu_N}(-\frac{i_j}{N})} \right) (1 + o(N^{-1}))
\end{align*}
uniformly over $\mu_N \in \cM_I \cap \cR^N$ and $i_1,\ldots,i_\ell \in \tfrac{1}{N} \Z \cap [0,1]$. Note that the final equality follows from the estimates
\begin{align*}
N\Big( H_\mu(\tfrac{u + c}{N}) - H_\mu(\tfrac{u}{N}) \Big) &= c H_\mu'(\tfrac{u}{N}) + \frac{c^2}{2N} H_\mu''(\tfrac{u}{N}) + O(N^{-2}) \\
\frac{\sqrt{S_{\mu_N}(\frac{u}{N})}}{\sqrt{S_{\mu_N}(\frac{u+c}{N})}} &= \exp\left( \frac{1}{2} \left( \log S_{\mu_N}(\tfrac{u}{N}) - \log S_{\mu_N}(\tfrac{u+c}{N}) \right) \right) \\
&= \exp\left( -\frac{c}{2N} \frac{S_{\mu_N}'(\tfrac{u}{N})}{S_{\mu_N}(\tfrac{u}{N})} + O(N^{-2}) \right)
\end{align*}
which hold for fixed $c > 0$, uniformly over $u \in [-1,0]$ and $\mu \in \cM_I$ as $N\to\infty$, and from expressing
$H_\mu'$ and $H_\mu''$ in terms of the $S$-transform as in \eqref{eq:H'} and \eqref{eq:H''}. This proves \eqref{eq:bessel_shift_asymptotics}.

If $|u| \le N^{1/3}$, then
\begin{align*}
-c\log S_\mu(u) - \frac{c(c+1)}{2N} \frac{S_\mu'(\frac{u}{N})}{S_\mu(\frac{u}{N})} &= -c \log S_\mu(0) - \frac{1}{N} \frac{S_\mu'(0)}{S_\mu(0)} \left( u + \frac{c(c+1)}{2} \right) + O(N^{-4/3}) \\
&= c \log \kappa_1(\mu) + \frac{1}{N} \frac{\kappa_2(\mu)}{\kappa_1(\mu)^2} \left( u + \frac{c(c+1)}{2} \right) + O(N^{-4/3})
\end{align*}
uniformly in $u$ and $\mu \in \cM_I$, where the second equality uses the evaluations of $S_\mu(0)$ and $S_\mu'(0)$ from \Cref{thm:S_and_S'}. This proves \eqref{eq:bessel_shift_asymptotics_near0}.
\end{proof}

\subsection{Proof of Theorem \ref{thm:main}} \label{ssec:proof_of_main}
Let
\[ \cE_N(M):= \sum_{m=1}^M \log \kappa_1(\mu_N^{(m)}) \quad \quad \mbox{and} \quad \quad \cV_N(M) := \frac{1}{N} \sum_{m=1}^M \frac{\kappa_2(\mu_N^{(m)})}{\kappa_1(\mu_N^{(m)})^2} \]
as in \Cref{thm:convergence_of_moments}. In contrast with the setting of \Cref{thm:convergence_of_moments}, $\cE_N(M)$ and $\cV_N(M)$ are not deterministic in general.

Fix $t_1 \ge \cdots \ge t_k > 0$. Our goal is to show that for any positive integers $k$ and $h$,
\begin{align} \label{eq:finite_dimensional_distribution}
\begin{split}
& \lim_{N\to\infty} \PP\left(\log y_j^{(N)}(\lfloor t_i N \rfloor) - \cE_N (\lfloor t_i N \rfloor) - \log N \le a_{i,j}: 1 \le i \le k, 1 \le j \le h\right) \\
& \quad \quad \quad \quad \quad \quad \quad \quad = \PP(\xi_j(\gamma(t_i)) \le a_{i,j}: 1 \le i \le k, 1 \le j \le h)
\end{split}
\end{align}
for every $a_{i,j} \in \R$ such that $a_{i,1} \ge \cdots \ge a_{i,h}$.

We may assume that there exists a compact interval $I \subset \R_{>0}$ such that
\begin{align} \label{eq:reduction}
\supp \mu_N(m) \subset I, \quad \quad 1 \le m \le \lfloor t_1N \rfloor, \quad \quad N \ge 1.
\end{align}
We show why this reduction is valid. Indeed, by \eqref{eq:interval_containment}
\[ 1 - \PP\left( \supp \mu_N(m) \subset I \right) = o(1/N) \]
uniformly over $m = 1,2,\ldots$. Then if $\cI_N$ is the event that $\supp \mu_N(m) \subset I$ for every $1 \le m \le \lfloor t_1 N \rfloor$, we have
\[ 1 - \PP\left( \cI_N \right) \le \lfloor t_1 N \rfloor \cdot o(1/N) = o(1). \]
Thus, to prove \eqref{eq:finite_dimensional_distribution}, we may assume without loss of generality that $\PP(\cI_N) = 1$ and that the complement of $\cI_N$ is empty, which is the desired reduction.

Let
\[ \cX_N := \left\{ x_j^{(N)}(m): 1 \le m \le M_1, 1 \le j \le N \right\} \]
denote the collection of squared singular values of $X^{(N)}(1),\ldots,X^{(N)}(M_1)$. We have $\cX_N \subset I$ from \eqref{eq:reduction}.

Under the assumption $\PP(\cI_N) = 1$, we have
\begin{align} \label{eq:EV_uniform}
\cV_N(\lfloor t_i N \rfloor) = \gamma(t_i) + o(1)
\end{align}
as $N\to\infty$ for $1 \le i \le k$. The point is that the $o_{\PP}(1)$ is upgraded to $o(1)$ uniform over all realization of $\cX_N$.

Suppose $c_1,\ldots,c_k > 0$ such that $c_1 + \cdots + c_k < 1$. Then
\begin{align*}
& \E\left[ \left. \prod_{i=1}^k \sum_{j=1}^N e^{c_i \left( \log y_j^{(N)}(\lfloor t_i N \rfloor) - \cE_N(\lfloor t_i N \rfloor) - \log N \right)} \right| \cX_N \right] \\
&\quad \quad = \E \left[ \left. \prod_{i=1}^k \sum_{j=1}^N e^{c_i\left( \xi_j^{(N)}(\frac{1}{4}\cV_N(\lfloor t_i N \rfloor)) - \frac{N}{2} \cV_N(\lfloor t_i N \rfloor) - \log N \right)} \right| \cX_N \right] (1 + o(1)) \\
&\quad \quad = \E \left[ \left. \prod_{i=1}^k \sum_{j=1}^\infty e^{c_i \xi_j(\cV_N(\lfloor t_i N \rfloor))} \right| \cX_N \right] (1 + o(1)) \\
&\quad \quad = \E \left[ \prod_{i=1}^k \sum_{j=1}^\infty e^{c_i \xi_j(\gamma(t_i))} \right] (1 + o(1))
\end{align*}
as $N\to\infty$, uniformly over all realizations of $\cX_N$. The first equality follows from \Cref{thm:convergence_of_moments}, using the uniformity statement in that theorem along with fact that $\cX_N \subset I$. The second equality uses \Cref{thm:brownian_convergence} and \Cref{thm:existence}. The third equality follows from \eqref{eq:EV_uniform}. Taking an overall expectation then yields
\begin{align} \label{eq:laplace_converge}
\lim_{N\to\infty} \E\left[ \prod_{i=1}^k \sum_{j=1}^N e^{c_i \left( \log y_j^{(N)}(\lfloor t_i N \rfloor) - \cE_N(\lfloor t_i N \rfloor) - \log N \right)} \right] = \E \left[ \prod_{i=1}^k \sum_{j=1}^\infty e^{c_i \xi_j(\gamma(t_i))} \right].
\end{align}
The result now follows from \Cref{thm:laplace_implies_findim}

\bibliographystyle{alpha_abbrvsort}
\bibliography{mybib}

\end{document}